\DeclareMathOperator{\Hom}{Hom}
\DeclareMathOperator{\End}{End}
\DeclareMathOperator{\mmod}{-mod}
\DeclareMathOperator{\proj}{-proj}
\DeclareMathOperator{\grmod}{-grmod}
\DeclareMathOperator{\grproj}{-grproj}
\DeclareMathOperator{\dgmod}{-dgmod}
\DeclareMathOperator{\dgMod}{-dgMod}
\DeclareMathOperator{\stab}{-stab}
\DeclareMathOperator{\grstab}{-grstab}
\DeclareMathOperator{\dgstab}{-dgstab}
\let\amsamp=&
\newtheorem{theorem}{Theorem}[section]
\newtheorem{lemma}[theorem]{Lemma}
\newtheorem{proposition}[theorem]{Proposition}
\newtheorem{corollary}[theorem]{Corollary}
\theoremstyle{definition}
\newtheorem{definition}[theorem]{Definition}
\theoremstyle{remark}
\newtheorem*{remark}{Remark}
\newtheorem{example}[theorem]{Example}
\begin{document}

\title{The Differential Graded Stable Category of a Self-Injective Algebra}
\author{Jeremy R. B. Brightbill}
\date{July 17, 2019}

\maketitle

\begin{abstract}
Let $A$ be a finite-dimensional, self-injective algebra, graded in non-positive degree.  We define $A\dgstab$, the differential graded stable category of $A$, to be the quotient of the bounded derived category of dg-modules by the thick subcategory of perfect dg-modules.  We express $A\dgstab$ as the triangulated hull of the orbit category $A\grstab/ \Omega(1)$.  This result allows computations in the dg-stable category to be performed by reducing to the graded stable category.  We provide a sufficient condition for the orbit category to be equivalent to $A\dgstab$ and show this condition is satisfied by Nakayama algebras and Brauer tree algebras.  We also provide a detailed description of the dg-stable category of the Brauer tree algebra corresponding to the star with $n$ edges.
\end{abstract}


\section{Introduction}

If $A$ is a self-injective $k$-algebra, then $A\stab$, the stable module category of $A$, admits the structure of a triangulated category.  This category has two equivalent descriptions.  The original description is as an additive quotient:  One begins with the category of $A$-modules and sets all morphisms factoring through projective modules to zero.  More categorically, we define $A\stab$ to be the quotient of additive categories $A\mmod/A\proj$.  The second description, due to Rickard \cite{rickard1989derived}, describes $A\stab$ as a quotient of triangulated categories.  Rickard obtains $A\stab$ as the quotient of the bounded derived category of $A$ by the thick subcategory of perfect complexes (i.e., complexes quasi-isomorphic to a bounded complex of projective modules).  Once this result is known, the triangulated structure on $A\stab$ is an immediate consequence of the theory of triangulated categories.  When translated back into the additive description, the homological shift functor $[-1]$ inherited from $D^b(A\mmod)$ becomes identified with the Heller loop functor $\Omega$, which maps each module to the kernel of a projective cover.  The triangulated description provides a well-behaved technical framework for transferring information between $A\stab$ and the derived category, while the additive description allows computations of morphisms to be performed in $A\mmod$ rather than $D^b(A\mmod)$.  If $A$ is made into a graded algebra, analogous constructions produce two equivalent descriptions of the graded stable category $A\grstab$.

If $A$ is a dg-algebra, we use the triangulated description to define the differential graded stable category $A\dgstab$.  More precisely, $A\dgstab$ is defined to be the quotient of the derived category $D^b_{dg}(A)$ of dg-modules by the thick subcategory of perfect dg-modules.  

The most immediately interesting feature of the dg-stable category is the presence of non-trivial interactions between the grading data and the triangulated structure.  In $D^b_{dg}(A)$, the grading shift functor coincides with the homological shift functor, and so in $A\dgstab$ the grading shift functor $(-1)$ can be identified with $\Omega$.  This phenomenon does not occur in the graded stable category, since the grading shift and homological shift functors in $D^b(A\grmod)$ are distinct.

However, working with dg-modules introduces new complications.  The presence of dg-modules which do not arise from complexes of graded modules is an obstacle to obtaining a simple additive definition of $A\dgstab$, without which computation of morphisms becomes much harder, as it must be done in the triangulated setting.  In this paper we consider the problem of finding a simple additive description of $A\dgstab$.

The dg-stable category has been studied by Keller \cite{Keller2005}, using the machinery of orbit categories; our approach is motivated by his work.  In Section \ref{The Dg-Stable Category}, we consider the case where $A$ is a non-positively graded, finite-dimensional, self-injective algebra, viewed as a dg-algebra with zero differential.  There is a natural functor $A\grstab \rightarrow A\dgstab$ which is faithful but not full.  This is due to the fact that $X \cong \Omega X(1)$ for all $X \in A\dgstab$; the corresponding isomorphism almost never holds in $A\grstab$.  To recover the missing morphisms, we turn to the orbit category $\mathcal{C}(A):= A\grstab/\Omega(1)$.  The objects of $\mathcal{C}(A)$ are those of $A\grstab$, and the morphisms $X\rightarrow Y$ are finite formal sums of morphisms $X \rightarrow \Omega^nY(n)$ in $A\grstab$.  Orbit categories need not be triangulated, but Keller proves they can always be included inside a "triangulated hull".  We shall construct a fully faithful functor $F_A: \mathcal{C}(A) \rightarrow A\dgstab$ whose image generates $A\dgstab$ as a triangulated category and show that $A\dgstab$ is the triangulated hull of $\mathcal{C}(A)$.

$F_A$ is an equivalence of categories precisely when it identifies $\mathcal{C}(A)$ with a triangulated subcategory of $A\dgstab$.  This is in general not the case, as there is no natural way to take the cone of a formal sum of morphisms with different codomains.  In Section \ref{Essential Surjectivity}, we provide a sufficient condition for $F_A$ to be an equivalence and show that this condition is satisfied by self-injective Nakayama algebras.  An example for which $F_A$ is not an equivalence is also provided.

In the second half of this paper, we investigate the dg-stable category of non-positively graded Brauer tree algebras.  A Brauer tree is the data of a tree, a cyclic ordering of the edges around each vertex, a marked vertex (called the exceptional vertex) and a positive integer multiplicity associated to the exceptional vertex.  The data of a Brauer tree determines, up to Morita equivalence, an algebra whose composition factors reflect the combinatorial data of the tree.  We refer to Schroll \cite{schroll2018brauer} for a detailed introduction to the theory of Brauer tree algebras and their appearance in group theory, geometry, and homological algebra, but we mention here one application which is of particular relevance.  Khovanov and Seidel \cite{khovanov2002quivers} link the category $D^b_{dg}(A)$, where $A$ is a graded Brauer tree algebra on the the line with $n$ vertices, to the triangulated subcategory of the Fukaya category generated by a chain of knotted Lagrangian spheres.  The braid group acts on $D^b_{dg}(A)$ by automorphisms, and the category $A\dgstab$ can be viewed as the quotient of $D^b_{dg}(A)$ by this action.

In Section \ref{Brauer Tree Algebras}, we show that $\mathcal{C}(A)$ is equivalent to $A\dgstab$ for any Brauer tree algebra, and in Section \ref{The Dg-Stable Category of the Star with n Vertices} we provide a detailed description of $A\dgstab$ when $A$ corresponds to the star with $n$ edges and multiplicity one.

This paper was motivated by the work of Chuang and Rouquier \cite{chuang2017perverse}, who, for a symmetric algebra $A$, define an action on a set of parametrized tilting complexes in $D^b(A)$.  Collections of simple $A$-modules act on the tilting complexes via application of perverse equivalences.  Placing a non-positive grading on $A$, this action can be defined in the dg-stable category, leading to interesting combinatorial interactions between the grading data of the algebra and the homological structure of the module category.  We shall describe the structure of this action when $A$ is a Brauer tree algebra in an upcoming work; the detailed computations in Section \ref{The Dg-Stable Category of the Star with n Vertices} serve as the foundation of this effort.


\section{Notation and Definitions}
\label{Notations and Definitions}

\subsection{Triangulated Categories}

A \textbf{triangulated category} is the data of an additive category $\mathcal{T}$, an automorphism $[1]$ of $\mathcal{T}$ (called the \textbf{suspension} or \textbf{shift}), and a family of \textbf{distinguished triangles} $X \rightarrow Y \rightarrow Z \rightarrow X[1]$ obeying certain axioms.  For more details on the definition and theory of triangulated categories, see Neeman \cite{neeman2001triangulated}.

In a triangulated category, any morphism $X \xrightarrow{f} Y$ can be completed to a triangle $X \xrightarrow{f} Y \rightarrow Z \rightarrow X[1]$.  We refer to $Z$ as a \textbf{cone} of $f$; it is unique up to non-canonical isomorphism.  Abusing notation, for any morphism $f:X \rightarrow Y$ in a triangulated category $\mathcal{T}$, we shall write $C(f)$ to refer to any choice of object completing the triangle $X \xrightarrow{f} Y \rightarrow C(f) \rightarrow X[1]$.  This will cause no confusion.

An additive functor $F: \mathcal{T}_1 \rightarrow \mathcal{T}_2$ is said to be \textbf{exact} or \textbf{triangulated} if it commutes with the shift functor and preserves distinguished triangles.  A full additive subcategory $\mathcal{I}$ of $\mathcal{T}$ is called \textbf{triangulated} if it is closed under isomorphisms, shifts, and cones.  A triangulated subcategory $\mathcal{I}$ of $\mathcal{T}$ is \textbf{thick} if $\mathcal{I}$ is closed under direct summands.  Given a thick subcategory, one can form the quotient category $\mathcal{T}/\mathcal{I}$ by localizing at the class of morphisms whose cone lies in $\mathcal{I}$.  There is a natural functor $\mathcal{T} \rightarrow \mathcal{T}/\mathcal{I}$ which is essentially surjective and whose kernel is $\mathcal{I}$.

\subsection{Complexes}
\label{complexes}
If $\mathcal{A}$ is any additive category, we write $Comp(\mathcal{A})$ for the category of (cochain) complexes over $\mathcal{A}$.  We shall write our complexes as $(C^\bullet, d^\bullet_C)$, where $d^n_C: C^n \rightarrow C^{n+1}$ for all $n\in \mathbb{Z}$.  We write $Ho(\mathcal{A})$ for the category of complexes and morphisms taken modulo homotopy.  If $\mathcal{A}$ is an abelian category, we let $D(\mathcal{A})$ denote the derived category of $\mathcal{A}$.  On any of these subcategories, we shall use the superscript $b$ (resp., $+, -$) to denote the full, replete subcategory generated by the bounded (resp., bounded below, bounded above) complexes.

If $\mathcal{A} = A\mmod$ for some algebra $A$, we shall write $Ho^{perf}(A\mmod)$ (resp., $D^{perf}(A\mmod)$) for the thick subcategory of $Ho(A\mmod)$ (resp., $D(A\mmod)$) generated by $A$.  The objects are those complexes which are homotopy equivalent  (resp., quasi-isomorphic) to bounded complexes of projective modules; we refer to them as the \textbf{strictly perfect}  (resp., \textbf{perfect}) complexes.

On any of the above categories, we let $[n]$ denote the $n$-th shift functor, defined by $(C^\bullet [n])^i = C^{i+n}$ and $d^\bullet_{C[n]} = (-1)^{n}d^\bullet_{C}$.  We write $H^n(C^\bullet)$ for the $n$-th cohomology group of $C^\bullet$, i.e. $H^n(C^\bullet) := ker(d^n_C)/im(d^{n-1}_C)$.

Given a morphism of complexes $f: X^\bullet \rightarrow Y^\bullet$, we define the \textbf{cone} of $f$ to be the complex $C(f)^\bullet = X^\bullet[1] \oplus Y^\bullet$ with differential $\begin{pmatrix} d^{\bullet}_{X[1]} & 0 \\ f[1] & d^\bullet_Y \end{pmatrix}$.  We obtain an exact triangle $X \xrightarrow{f} Y \rightarrow C(f) \rightarrow X[1]$ in $Ho(\mathcal{A})$.

We write $\tau_{\le n}, \tau_{\ge n}, \tau_{< n}, \tau_{> n}$ for the truncation functors on $D(\mathcal{A})$ defined by the canonical t-structure.  More explicitly, if $X^\bullet$ is a complex, the $k$th term of $\tau_{\le n}X^\bullet$ is $X^k$ if $k < n$, $0$ if $k > n$, and $ker(d^n_X)$ if $k =n$.  $\tau_{> n}X^\bullet$ is defined analogously; here the $n$th term equal to $im(d^n_X)$.  We also denote by $X^{\le n}$ the complex whose $k$th term is $X^k$ for $k \le n$ and $0$ for $k > n$.  We denote $X^{\ge n}, X^{< n}, X^{>n}$ similarly, and refer to these complexes as the \textbf{sharp truncations} of $X^\bullet$.


\subsection{Modules and the Stable Category}
\label{module definitions}

If $A$ is an algebra over a field $k$, we let $A\mmod$ denote the category of finitely generated right $A$-modules, and let $A\proj$ denote the full subcategory of finitely generated projective modules.   $A$-Mod and $A$-Proj will denote the categories of all modules and projective modules, respectively.

Given an $A$-module $X$, we define the \textbf{socle} of $X$, $soc(X)$ to be the sum of all simple submodules $X$.  We define the \textbf{radical} of $X$, $rad(X)$, to be the intersection of all maximal submodules of $X$, and we define the \textbf{head} of $X$ to be the quotient $hd(X) = X/rad(X)$.  We note that $rad(A)$, where $A$ is viewed as a right module over itself, is equal to the Jacobson radical of $A$.  If $X$ is finitely generated, then $rad(X) = Xrad(A)$.

An algebra $A$ is said to be \textbf{self-injective} if $A$ is injective as a right $A$-module.  In a self-injective algebra, the classes of finitely-generated projective and injective modules coincide.  $A$ is said to be \textbf{symmetric} if there is a linear map $\lambda: A \rightarrow k$ such that $ker(\lambda)$ contains no left or right ideals of $A$, and $\lambda(ab) = \lambda(ba)$ for all $a, b \in A$.  All symmetric algebras  are self-injective.

We let $A\stab$ denote the \textbf{stable module category} of $A$.  The objects of $A\stab$ are the objects of $A\mmod$, and $\Hom_{A\stab}(X,Y)$ is defined to be the quotient of $\Hom_{A\mmod}(X,Y)$ by the subspace of morphisms factoring through projective modules.  There is a full, essentially surjective functor $A\mmod \twoheadrightarrow A\stab$ which is the identity on objects.  If $A$ is self-injective, then $A\stab$ admits the structure of a triangulated category, and it has been shown by Rickard \cite{rickard1989derived} that $A\stab$ is equivalent as a triangulated category to $D^b(A\mmod)/D^{perf}(A\mmod)$.

For a more detailed introduction to the theory of finite-dimensional algebras, see for instance Benson \cite{benson1991representations}.

We say that an additive category $\mathcal{C}$ is a \textbf{Krull-Schmidt category} if every object in $\mathcal{C}$ is isomorphic to a finite direct sum of objects with local endomorphism rings.

If $A$ is finite-dimensional, both $A\mmod$ and $A\stab$ are Krull-Schmidt.  Any indecomposable object of $A\stab$ is the image of an indecomposable object in $A\mmod$.


\subsection{Graded Modules}
Let $A$ be a graded algebra over a field $k$.  We denote by $A\grmod$ (resp., $A\grproj$) the category of finitely generated graded right modules (resp., finitely generated graded projective right modules).  We shall use upper case letters when the modules are not required to be finitely generated, in analogy with Section \ref{module definitions}.

The \textbf{graded stable module category} $A\grstab$ can be defined analogously to $A\stab$.  When $A$ is self-injective, we once again have that $A\grstab$ is triangulated and equivalent to $D^b(A\grmod)/D^{perf}(A\grmod)$.

If $X$ is a graded $A$-module, we write $X^i$ to denote the homogenous component of $X$ in degree $i$.  (If $X^\bullet$ is a complex of graded modules, we shall denote the degree $i$ component of the $n$th term of the complex by $(X^n)^i$.)  On any category of graded objects, we define the grading shift functor $(n)$ by $X(n)^i = X^{i + n}$.  If $x \in X$ is a homogeneous element, we let $|x|$ denote the degree of $x$.

For a graded module $X$, we define the \textbf{support} of $X$ to be the set $supp(X) = \{n \in \mathbb{Z} | X^n \neq 0\}$.  We also define $max(X) = sup(supp(X))$ and $min(X) = inf(supp(X))$.  Note that if $A$ is finite-dimensional and $X$ is a finitely generated nonzero $A$-module, then $X$ is a finite-dimensional $k$-vector space, therefore $supp(X)$ is a finite, nonempty set and $max(X)$ and $min(X)$ are finite.

Given graded modules $X$ and $Y$, define $\Hom^{\bullet}_{A\grmod}(X, Y)$ to be the graded vector space whose degree $n$ component is the space $\Hom_{A\grmod}(X, Y(n))$ of degree $n$ morphisms.  If $X$ is a graded left $B$-module for some graded algebra $B$, then $\Hom^{\bullet}_{A\grmod}(X, Y)$ is a graded right $B$-module.


\subsection{Differential Graded Modules}
\label{Differential Graded Modules}

A \textbf{differential graded algebra} is a pair $(A, d_A)$, where $A$ is a graded $k$-algebra and $d_A$ is a degree $1$ $k$-linear differential which satisfies, for all homogenous $a, b \in A$, the equation $d_A(ab) = d_A(a)b + (-1)^{|a|}ad_A(b)$ .

If $(A, d_A)$ is a differential graded $k$-algebra, a \textbf{differential graded right $A$-module} (or dg-module, for short) is a pair $(X, d_X)$ consisting of a graded right $A$-module $X$ and a degree $1$ $k$-linear differential $d_X: X\rightarrow X$ satisfying $d_X(xa) = d_X(x)a + (-1)^{|x|}xd_A(a)$ for all homogeneous elements $x\in X, a \in A$.  A morphism of differential graded modules is defined to be a homomorphism of graded $A$ modules which commutes with the differential.  We denote by $A\dgmod$ the category of finitely-generated right dg-modules.  As above, we shall write $A\dgMod$ for the category of arbitrary dg-modules.

As with complexes, we write $H^i(X)$ for the $i$th cohomology group of $X$, i.e. the degree $i$ component of $ker(d_X)/im(d_X)$.  

Any graded algebra $A$ can be viewed as a differential graded algebra with zero differential.  In this case, for any dg-module $(X, d_X)$, $d_X$ is a degree $1$ morphism of graded modules, and so the kernel and image of $d_X$ are dg-submodules of $X$ with zero differential.  In this paper, we shall work exclusively with dg-algebras with zero differential.

For dg-modules, we define the grading shift functor $(n): A\dgMod \rightarrow A\dgMod$ by $(X, d_X)(n) = (X(n), d_{X(n)})$, where $d_{X(n)} = (-1)^n d_X(n)$. 

There is a faithful functor $\widehat{\phantom{x}}: Comp(A\grmod) \rightarrow A\dgMod$ sending the complex $(X^\bullet, d^{\bullet}_X)$ to the dg-module $(\widehat{X}, d_{\widehat{X}})$ whose underlying graded module is $\widehat{X} = \bigoplus_{n\in \mathbb{Z}} X^n(-n)$ and whose differential $d_{\widehat{X}}$ restricts to $d^n_{X}(-n)$ on $X^n(-n)$.  If $A$ is finite-dimensional, $\widehat{\phantom{x}}$ restricts to a functor $Comp^b(A\grmod) \rightarrow A\dgmod$.  Note also that $\widehat{X^\bullet [k]} = \widehat{X}(k)$.

Identifying graded modules with complexes concentrated in degree zero yields a fully faithful functor $A\grmod \hookrightarrow Comp^b(A\grmod)$.  The restriction of $\widehat{\phantom{x}}$ to $A\grmod$ is fully faithful.  Note that $\widehat{X(k)} = \widehat{X}(k)$.

If $f, g: X \rightarrow Y$ are morphisms of dg-modules, we say $f$ and $g$ are \textbf{homotopic} if there is a degree $-1$ graded morphism $h: X \rightarrow Y$ such that $f-g = h\circ d_X + d_Y \circ h$.  We write $Ho_{dg}(A)$ for the category of left dg-modules over $A$ and homotopy classes of morphisms.  By formally inverting the quasi-isomorphisms of $Ho_{dg}(A)$, we obtain $D_{dg}(A)$, the derived category of dg-modules.  We again use the superscript $b$ (resp., $+, -$) to denote the full subcategory whose objects are isomorphic to dg-modules with bounded (resp. bounded below, bounded above) support.  We write $Ho^{perf}_{dg}(A)$ (resp., $D^{perf}_{dg}(A)$) for the thick subcategories of $Ho^b_{dg}(A)$ (resp., $D^b_{dg}(A)$) generated by the dg-module $A$.  We refer to the objects of $Ho^{perf}_{dg}(A)$ (resp., $D^{perf}_{dg}(A)$) as the \textbf{strictly perfect} (resp., \textbf{perfect}) dg-modules.

If $P$ is strictly perfect, then for any dg-module $X$, we have an isomorphism $\Hom_{Ho_{dg}(A)}(P, X) \cong \Hom_{D_{dg}(A)}(P, X)$.  In addition, if $A$ is a finite-dimensional, self-injective graded algebra with zero differential, then $\Hom_{Ho_{dg}(A)}(X, P) \cong \Hom_{D_{dg}(A)}(X, P)$.  Any perfect dg-module is quasi-isomorphic to a strictly perfect dg-module.

If $P^\bullet \in Comp^-(A\grproj)$, then $\Hom_{Ho_{dg}(A)}(\widehat{P}, X) \cong \Hom_{D_{dg}(A)}(\widehat{P}, X)$ for all $X\in A\dgmod$.  If $A$ is finite-dimensional, self-injective, and has zero differential, then for $I^\bullet \in Comp^+(A\grproj)$, we also have $\Hom_{Ho_{dg}(A)}(X, \widehat{I}) \cong \Hom_{D_{dg}(A)}(X, \widehat{I})$ for all $X\in A\dgmod$.

We define the \textbf{dg-stable module category} of $A$ to be the quotient $A\dgstab:= D^b_{dg}(A)/D^{perf}_{dg}(A)$.  

If $A$ is finite-dimensional, we have essentially surjective functors $A\dgmod \twoheadrightarrow Ho^b_{dg}(A) \twoheadrightarrow D^b_{dg}(A) \twoheadrightarrow A\dgstab$, each of which is the identity on objects.  By composing with the inclusion $A\grmod \hookrightarrow A\dgmod$, we obtain an additive functor $A\grmod \rightarrow A \dgstab$ whose kernel contains $A\grproj$.  Hence this functor factors through $A\grmod \twoheadrightarrow A\grstab$.

Given a morphism of dg-modules $f: X \rightarrow Y$, we define the \textbf{cone} of $f$ to be the complex $C(f) = X(1) \oplus Y$ with differential $\begin{pmatrix} d_{X(1)} & 0 \\ f(1) & d_Y \end{pmatrix}$.  We obtain an exact triangle $X \xrightarrow{f} Y \rightarrow C(f) \rightarrow X(1)$ in $Ho_{dg}(A)$.


\subsection{Functors and Resolutions} \label{resolutions}

Let $A$ be a finite-dimensional, self-injective graded algebra.

Let $m: A^{op} \otimes_k A \twoheadrightarrow A$ denote the multiplication map, viewed as a morphism of graded $(A^{op}\otimes_k A)$-modules, and let $I = ker(m)$.  We define the functor $\Omega := - \otimes_A I: A\grmod \rightarrow A\grmod$.  Note that $\Omega$ has a right adjoint $\Omega' := \Hom^{\bullet}_{A\grmod}(I, -)$.

Since $I$ is projective both as a right and left $A$-module, we have that $\Omega$ is exact and $\Omega(A\grproj) \subset A\grproj$.  Thus $\Omega$ lifts to $D^b(A\grmod)$ and descends to $A\grstab$; we also have that $\Omega(D^{perf}(A\grmod)) \subset D^{perf}(A\grmod)$.  The complex $P^\bullet = 0 \rightarrow I \hookrightarrow A^{op} \otimes_k A \twoheadrightarrow A \rightarrow 0$ is an exact complex of projective right $A$-modules, hence is homotopy equivalent to zero.  Then for any $X \in A\grmod$, we have that $X \otimes_A P^\bullet $ is homotopy equivalent to zero, hence exact.  But $X \otimes_A P^\bullet \cong 0 \rightarrow \Omega X \hookrightarrow  X \otimes_k A \twoheadrightarrow X \rightarrow 0$, hence $\Omega X$ is the kernel of a surjection from a projective right $A$-module onto $X$.  Thus $\Omega$ is an autoequivalence of $A\grstab$ and is isomorphic to the desuspension functor for the triangulated structure.  In $A\grstab$, $\Omega X$ is isomorphic to the kernel of a projective cover of $X$.

Similarly, for any complex $X^\bullet \in Comp^b(A\grmod)$, we have a short exact sequence of complexes $0 \rightarrow \Omega(X^\bullet) \hookrightarrow X^\bullet \otimes_k A \twoheadrightarrow X^\bullet \rightarrow 0$.  From the resulting triangle in $D^b(A\grmod)$, we obtain a natural transformation $[-1] \rightarrow \Omega$.  Since $X^\bullet \otimes_k A \in D^{perf}(A\grmod)$, this natural transformation descends to a natural isomorphism $[-1] \rightarrow \Omega$ in $A\grstab \cong$ $D^b(A\grmod)/D^{perf}(A\grmod)$.

By a similar argument, $\Omega$ defines a functor $A\dgmod \rightarrow A\dgmod$ which is exact and preserves direct summands of $A$.  Thus $\Omega$ lifts to $D^b_{dg}(A)$ and preserves $D^{perf}_{dg}(A)$, and so $\Omega$ descends to $A\dgstab$.  We also have a natural transformation $(-1) \rightarrow \Omega$ of endofunctors of $D^b_{dg}(A)$ which descends to an isomorphism in $A\dgstab$.

Similarly, $\Omega'$ is exact and preserves projective modules, and so descends to $A\grstab$ and $A\dgstab$ and lifts to $D^b(A\grmod)$ and $D^b_{dg}(A)$.  Since $\Omega'$ is right adjoint to $\Omega$, we have that $\Omega'$ is quasi-inverse to $\Omega$ in $A\grstab$ and $A\dgstab$.

For any $X\in A\grmod$, we can construct a projective resolution $(P^\bullet_X, d^{\bullet}_{P_X})$ of $X$, such that $coker(d^{-n-1}_{P_X}) = \Omega^{n}(X)$ for any $n \ge 0$.  More specifically, for $n\ge 0$, we let $P^{-n}_X = \Omega^nX \otimes_k A$,  and for $n \ge 1$ we let $d^{-n}_{P_X}$ be the composition $P^{-n}_X = \Omega^nX \otimes_k A \twoheadrightarrow \Omega^n X \hookrightarrow P^{-n+1}_X$.  Likewise, we can construct an injective resolution $(I^\bullet_X, d^{\bullet}_{I_X})$ such that $ker(d^n_{I_X}) = (\Omega')^n(X)$ for all $n\ge 0$.  We let $I^n_X = \Hom^\bullet_{A\grmod}(A^{op}\otimes_k A, (\Omega')^n(X))$ and define the differential analogously.  Joining $P^\bullet_X$ and $I^\bullet_X$ via the map $P^0_X \twoheadrightarrow X \hookrightarrow I^0_X$, we can define an acyclic biresolution $(B^\bullet_X, d^\bullet_{B_X})$ with $B^n_X = I^n_X$ for $n \ge 0$ and $B^n_X = P^{n+1}_X$ for $n < 0$.  We refer to these resolutions as the \textbf{standard resolutions} of $X$.


\subsection{Dg-categories}

We give a brief introduction to the terminology and machinery of dg-categories.  For more details, the reader may consult Keller \cite{keller2006differential}.

A \textbf{differential graded category} or \textbf{dg-category} over a field $k$ is a category enriched over $k\dgMod$.  A \textbf{dg-functor} between two dg-categories is a functor of categories enriched over $k\dgMod$.  Thus morphism spaces in a dg-category have a natural structure of complexes of $k$-vector spaces, and dg-functors induce morphisms of complexes on these Hom spaces.  Let $dgcat_k$ denote the category of all small dg-categories over the field $k$.

Given a dg-category $\mathcal{A}$, we define the \textbf{homotopy category} of $\mathcal{A}$ to be the category $H^0(\mathcal{A})$ whose objects are the objects of $\mathcal{A}$ and whose morphsms are given by $\Hom_{H^0(\mathcal{A})}(X,Y) := H^0(\Hom_{\mathcal{A}}(X,Y))$.  Similarly, we define $Z^0(\mathcal{A})$ to be the category with the same objects whose morphisms are the closed degree 0 morphisms of $\mathcal{A}$.

A dg-functor $F: \mathcal{A} \rightarrow \mathcal{B}$ is called a \textbf{quasi-equivalence} if $F$ induces quasi-isomorphisms on all morphism spaces and $H^0(F): H^0(\mathcal{A}) \rightarrow H^0(\mathcal{B})$ is an equivalence.  By inverting the quasi-equivalences in $dgcat_k$, we obtain the \textbf{homotopy category of dg-categories}, denoted $Ho(dgcat_k)$.

Define the \textbf{dg-category of dg $k$-modules} $k\dgMod_{dg}$ as follows:  The objects of $k\dgMod_{dg}$ are the differential graded $k$-modules.  The degree $n$ piece of $\Hom_{k\dgMod_{dg}}(X, Y)$ is $\bigoplus_{i} \Hom_k(X^i, Y^{i+n})$, and the differential is given by $d_n(f_i) = d_{Y^{n+i}}f_i + (-1)^n f_i d_{X^{i-1}}$.

Given a dg-category $\mathcal{A}$, we define a \textbf{right dg $\mathcal{A}$-module} to be a dg-functor $M: \mathcal{A}^{op} \rightarrow k\dgMod_{dg}$.  A morphism of dg $\mathcal{A}$-modules is a natural transformation of dg-functors.  We let $\mathcal{A}\dgMod$ denote the category of dg $\mathcal{A}$-modules.  Moreover, we can define the \textbf{dg-category of dg $\mathcal{A}$-modules}, $\mathcal{A}\dgMod_{dg}$ in analogy to $k\dgmod_{dg}$.

Each object $X$ of $\mathcal{A}$ defines a dg $\mathcal{A}$-module $X^{\wedge}: Y \mapsto \Hom_\mathcal{A}(Y, X)$, and $\wedge:  X \mapsto X^{\wedge}$ defines a fully faithful dg-functor from $\mathcal{A}$ to $\mathcal{A}\dgMod_{dg}$.  We say a dg-module $M$ is \textbf{representable} if it is isomorphic to a dg-module in the image of $\wedge$.  We can define the shift $M[n]$ of a dg-module $M$ by $M[n](X) = M(X)[n]$.  Similarly, we define the cone of a morphism of dg-modules $f: M \rightarrow N$ to be the dg-module given by $C(f)(X) = C(f_X)$.  The homology of a dg-module is also defined object-wise.

Note that when $\mathcal{A}$ has one object, $*$, we can identify $\mathcal{A}$ with the dg-algebra $A = \End_\mathcal{A}(*)$; in this case a dg $\mathcal{A}$-module is the same data as a dg $A$-module.

Localizing $\mathcal{A}\dgMod$ at the quasi-isomorphisms, we obtain the \textbf{derived category} $D_{dg}(\mathcal{A})$ of dg $\mathcal{A}$-modules.  Each dg-functor $F:  \mathcal{A} \rightarrow \mathcal{B}$ defines an $\mathcal{A}^{op} \otimes \mathcal{B}$-module given by $X_F: (A, B) \mapsto \Hom_\mathcal{B}(B, FA)$.  Denote by $rep(\mathcal{A}, \mathcal{B})$ the set of objects $X \in D_{dg}(\mathcal{A}^{op} \otimes \mathcal{B})$ such that for all $A \in \mathcal{A}$, $X(A, -)$ is isomorphic in $D_{dg}(\mathcal{B})$ to a representable $\mathcal{B}$-module.  The correspondence $F \mapsto X_F$ defines a bijection between the morphisms $\mathcal{A} \rightarrow \mathcal{B}$ in $Ho(dgcat_k)$ and the isomorphism classes of objects in $rep(\mathcal{A}, \mathcal{B})$.  This correspondence respects composition, where composition of bimodules is given by the derived tensor product.

A dg-category $\mathcal{A}$ is \textbf{pretriangulated} if for every $X \in \mathcal{A}$ and $n \in \mathbb{Z}$, the dg-module $X^\wedge[n]$ is representable, and for any closed morphism $f$ of degree zero, $C(f^\wedge)$ is representable.  If $\mathcal{A}$ is pretriangulated, then $H^0(\mathcal{A})$ is a triangulated category, with shift and cone induced from $\mathcal{A}$.  Every dg-category $\mathcal{A}$ embeds into a \textbf{pretriangulated hull} $PreTr(\mathcal{A})$.  The pretriangulated hull has an explicit construction, due to Bondal and Kapranov \cite{bondal1991enhanced}, in terms of one-sided twisted complexes.  (Note that \cite{bondal1991enhanced} uses the notation $PreTr^+(\mathcal{A})$ for this construction.)  Every morphism in $Ho(dgcat_k)$ from a dg-category $\mathcal{A}$ to a pretriangulated dg-category $\mathcal{B}$ factors uniquely through $PreTr(\mathcal{A})$, and if $\mathcal{A}$ is already pretriangulated then it is quasi-equivalent to $PreTr(\mathcal{A})$.  Define the \textbf{triangulated hull} of $\mathcal{A}$ to be the triangulated category $Tr(\mathcal{A}):= H^0(PreTr(\mathcal{A}))$.  $H^0(\mathcal{A}) \hookrightarrow Tr(\mathcal{A})$ generates $Tr(\mathcal{A})$ as a triangulated category; if $\mathcal{A}$ is pretriangulated, then $H^0(\mathcal{A})$ is equivalent to $Tr(\mathcal{A})$.

Given a dg-category $\mathcal{A}$ and a full dg-subcategory $\mathcal{B}$, there is a dg-category $\mathcal{A}/\mathcal{B}$, called the \textbf{dg-quotient of $\mathcal{A}$ by $\mathcal{B}$}.  The dg-quotient is obtained by "setting all objects of $\mathcal{B}$ to zero" by formally adjoining a morphism $\mathcal{E}_B$ for each object $B\in \mathcal{B}$ satisfying $d(\mathcal{E}_B) = id_B$.  (See Drinfeld \cite{drinfeld2004dg} for details.)  The dg-quotient generalizes the Verdier quotient in the sense that $Tr(\mathcal{A})/Tr(\mathcal{B}) \cong Tr(\mathcal{A}/\mathcal{B})$.  It is characterized by the universal property that any morphism $\mathcal{A} \rightarrow \mathcal{C}$ in $Ho(dgcat_k)$ sending each object of $\mathcal{B}$ to a contractible object in $\mathcal{C}$ factors uniquely through $\mathcal{A}/\mathcal{B}$ (Tabuada, \cite{tabuada2010drinfeld}).

Given a dg-category $\mathcal{A}$ and $F \in rep(\mathcal{A}, \mathcal{A})$, one can define the dg-orbit category $P: \mathcal{A} \rightarrow \mathcal{A}/F$.  This category is characterized by the universal property that for any dg-category $\mathcal{B}$ and any $G \in rep(\mathcal{A}, \mathcal{B})$ such that $G \circ F \cong G$, $G$ factors through $P$.  For a more precise description, see Keller \cite{Keller2005}, Section 9.3.


\subsection{Autoequivalences and Automorphisms} \label{autoequivalence}
Given any category $\mathcal{C}$ and an autoequivalence $F: \mathcal{C} \rightarrow \mathcal{C}$, there is a category $\widetilde{\mathcal{C}}$, an automorphism $\widetilde{F}: \widetilde{\mathcal{C}} \rightarrow \widetilde{\mathcal{C}}$, and an equivalence of categories $\pi: \widetilde{\mathcal{C}} \rightarrow \mathcal{C}$ such that $\pi \circ \widetilde{F} = F \circ \pi$.  Identifying $\mathcal{C}$ with $\widetilde{\mathcal{C}}$, we can assume without loss of generality that the autoequivalence $F$ of $\mathcal{C}$ is an automorphism.


\subsection{Left Dg-Modules}

In this paper, we work exclusively with right dg-modules.  All the results presented are valid for left dg-modules, but minor adjustments must be made to account for numerous unpleasant sign conventions.  We describe the necessary adjustments here.

If $(A, d_A)$ is a differential graded $k$-algebra, we define a \textbf{left differential graded $A$-module} (or dg-module, for short) to be a pair $(X, d_X)$ consisting of a graded left $A$-module $X$ and a degree $1$ $k$-linear differential $d_X: X\rightarrow X$ satisfying $d_X(ax) = d_A(a)x + (-1)^{|a|}ad_X(x)$ for all homogeneous $a\in A, x \in X$.  We let $A\dgmod_l$ denote the category of left dg-modules over $A$.

If $A$ is a graded algebra, we define the algebra $(\overline{A}, \circ)$ to be the set $A$ with multiplication given by $a\circ b = (-1)^{|a||b|}(ab)$.  Similarly, if $M$ is a graded right $A$-module, we denote by $(\overline{M}, \circ)$ the graded right $\overline{A}$-module with $M$ as the underlying set and the operation given by $m \circ a = (-1)^{|m||a|}ma$.  We define $\overline{M}$ similarly for left graded modules.  The functor sending $M$ to $\overline{M}$ and acting as the identity on morphisms defines an isomorphism between $A\grmod$ and $\overline{A}\grmod$.

Let $A^{op}$ denote the opposite algebra.  We call $\overline{A}^{op}$ the graded opposite algebra.  If $(A, d_A)$ is a dg-algebra, then $(\overline{A}^{op}, d_A)$ is also a dg-algebra.  If $(M, d_M) \in A\dgmod$, then $(\overline{M}, d_M) \in \overline{A}^{op}\dgmod_l$, and this defines an isomorphism of categories.

As before, there is a faithful functor $\widehat{\phantom{x}}: Comp(A\grmod_l) \rightarrow A\dgMod_l$ sending the complex $(M^\bullet, d^{\bullet}_M)$ to the dg-module $(\widehat{M}, d_{\widehat{M}})$.  The underlying graded module $(\widehat{M}, *)$ is given by $\widehat{M} = \bigoplus_{n\in \mathbb{Z}} M^n(-n)$; the operation $*$ is defined by $a*m = (-1)^{|a|n}am$, where $m \in M^n(-n)$.  The differential $d_{\widehat{M}}$ restricts to $d^n_{M}(-n)$ on $M^n(-n)$.  This definition of $\widehat{\phantom{a}}$ is equivalent to converting to complexes of right $\overline{A}^{op}$-modules, applying the original definition of $\widehat{\phantom{a}}$, and then converting back to left dg-modules over $A$.

If $M$ is a left dg-module, define the dg-grading shift functor $\langle n \rangle: (M, d_M) \mapsto (M\langle n \rangle, d_{M\langle n \rangle})$.  The underlying set of the left graded module $(M\langle n \rangle, \cdot_n)$ is $M(n)$, and the operation $\cdot_n$ is given by $a \cdot_n m = (-1)^{|a|n}am$.  The differential is given by $d_{M\langle n \rangle} = (-1)^n d_M$.  Triangles in the homotopy or derived categories take the form $X \rightarrow Y \rightarrow Z \rightarrow X\langle 1 \rangle$.  For $M^{\bullet} \in Comp(A\grmod_l)$ we have that $\widehat{M^{\bullet}[n]} = \widehat{M}\langle n \rangle$ and $\widehat{M^{\bullet}(n)} = \widehat{M}(n)$.

If $X$ and $Y$ are graded modules, we say that a function $f: X\rightarrow Y$ is a \textbf{graded skew-morphism of degree $n$} if it is a degree $n$ $k$-linear map such that $f(ax) = (-1)^{n|a|}af(x)$ for all $x\in X$ and all homogeneous $a \in A$.  We say two morphisms of left dg-modules $f, g: X\rightarrow Y$ are \textbf{homotopic} if there is a graded skew-morphism $h: X \rightarrow Y$ of degree $-1$ such that $f-g = h\circ d_X + d_Y\circ h$.  We also note that if $A$ has zero differential, then $d_A$ is a graded skew-morphism of degree $1$.


\section{The Dg-Stable Category}
\label{The Dg-Stable Category}

\subsection{Construction}

Let $A$ be a finite-dimensional, non-positively graded, self-injective $k$-algebra, viewed as a dg-algebra with zero differential.  In this section, we shall provide a description of the dg-stable category of $A$ in terms of the graded stable category.  In Definition \ref{nice category}, we define the orbit category $\mathcal{C}(A) = A\grstab/\Omega(1)$.  In Definition \ref{inclusion functor} we define a functor $F_A: \mathcal{C}(A) \rightarrow A\dgstab$ and in Theorem \ref{dgstable} we show that $F_A$ is fully faithful with essential image generating $A\dgstab$ as a triangulated category.

We begin with some simple facts about graded $A$-modules.

\begin{proposition}
Let $X, Y \in A\grmod$.  If $supp(X) \cap supp(Y) = \emptyset$, then both $\Hom_{A\grmod}(X, Y)$ and $\Hom_{A\grstab}(X, Y)$ are zero.  
\end{proposition}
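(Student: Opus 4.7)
The proof is largely unpacking definitions, so the plan is short.

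First I would handle the $A\grmod$ statement directly. A morphism $f : X \to Y$ in $A\grmod$ is a degree-preserving $A$-linear map, so for each $n \in \mathbb{Z}$ it restricts to a $k$-linear map $f^n : X^n \to Y^n$. If $n \in supp(X)$, then by hypothesis $n \notin supp(Y)$, so $Y^n = 0$ and $f^n = 0$; if $n \notin supp(X)$ then $X^n = 0$ and again $f^n = 0$. Since $f$ is determined by its components $f^n$, we conclude $f = 0$, giving $\Hom_{A\grmod}(X,Y) = 0$.

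For the stable statement, I would use the fact that $A\grstab$ is by definition the additive quotient of $A\grmod$ obtained by annihilating the ideal of morphisms factoring through objects of $A\grproj$. The canonical functor $A\grmod \to A\grstab$ is full and the identity on objects, so it induces a surjection
\[
\Hom_{A\grmod}(X,Y) \twoheadrightarrow \Hom_{A\grstab}(X,Y).
\]
Since the source is zero by the previous paragraph, so is the target.

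There is no real obstacle here: the only content is matching the degree of a homogeneous element with the support condition, and the stable statement is then a formal consequence. I do not foresee needing any of the more delicate machinery (projective covers, the functor $\Omega$, etc.) that appears later in the section.
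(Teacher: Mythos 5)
Your proof is correct and follows exactly the paper's argument: the first claim is immediate from the fact that graded morphisms preserve degree (which you simply spell out component-wise), and the stable claim follows because $\Hom_{A\grstab}(X,Y)$ is a quotient of $\Hom_{A\grmod}(X,Y)$.
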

\begin{proof}
The first part of the statement follows immediately from the definition of morphisms of graded modules.  Since $\Hom_{A\grstab}(X, Y)$ is defined as a quotient of $\Hom_{A\grmod}(X, Y)$, the second part of the statement follows from the first.
\end{proof}

\begin{proposition}
\label{head lemma}
Let $X \in A\grmod$.  Then
\begin{align*}
max(hd(X)) &= max(X)\\
min(soc(X)) &= min(X)
\end{align*}
\end{proposition}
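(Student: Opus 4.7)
The plan is to exploit the non-positive grading to reduce the problem, at the extremal degrees $\max(X)$ and $\min(X)$, to classical facts about the head and socle over the finite-dimensional algebra $A^0$.

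First I would establish the structural fact that $\operatorname{rad}(A) = \operatorname{rad}(A^0) \oplus A^{<0}$ as graded vector spaces. Since $A$ is non-positively graded and finite-dimensional, $A^{<0}$ is a (two-sided) ideal graded in strictly negative degrees, and iterating the multiplication must eventually land in degrees below $\min(\mathrm{supp}(A))$; hence $A^{<0}$ is nilpotent and contained in $\operatorname{rad}(A)$. The identification $A/A^{<0} \cong A^0$ together with the standard fact that $\operatorname{rad}(A)/N = \operatorname{rad}(A/N)$ for any nilpotent ideal $N$ then yields $\operatorname{rad}(A) = \operatorname{rad}(A^0) \oplus A^{<0}$, and in particular $\operatorname{rad}(A)^0 = \operatorname{rad}(A^0)$.

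For the head equation, the inequality $\max(hd(X)) \le \max(X)$ is immediate from $hd(X)$ being a quotient of $X$. For the reverse, set $m = \max(X)$ and compute the degree $m$ component of $X \cdot \operatorname{rad}(A)$: any contribution coming from $X \cdot A^{<0}$ lives in degrees strictly below $m$, so $(X \cdot \operatorname{rad}(A))^m = X^m \cdot \operatorname{rad}(A^0)$. Since $X^m$ is a nonzero finite-dimensional right $A^0$-module, Nakayama's lemma gives $X^m \cdot \operatorname{rad}(A^0) \neq X^m$, so $hd(X)^m = X^m / X^m \cdot \operatorname{rad}(A^0) \neq 0$, whence $\max(hd(X)) \ge m$.

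For the socle equation, $\min(soc(X)) \ge \min(X)$ is immediate. For the reverse, set $m = \min(X)$ and use the characterization $soc(X) = \{x \in X : x \cdot \operatorname{rad}(A) = 0\}$, valid for finite-dimensional algebras. Any $x \in X^m$ automatically satisfies $x \cdot A^{<0} = 0$ by minimality of $m$, so $soc(X)^m = \{x \in X^m : x \cdot \operatorname{rad}(A^0) = 0\} = soc_{A^0}(X^m)$, which is nonzero since $X^m$ is a nonzero finite-dimensional $A^0$-module. Hence $\min(soc(X)) \le m$.

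The only genuinely non-formal step is the decomposition $\operatorname{rad}(A) = \operatorname{rad}(A^0) \oplus A^{<0}$; this is where the hypothesis of non-positive grading is used essentially, and once it is in hand both statements reduce, at their respective extremal degrees, to Nakayama and the nonvanishing of socles of nonzero finite-dimensional modules.
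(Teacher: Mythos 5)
Your proof is correct, but it takes a genuinely different route from the paper's for the first equation. The paper's argument for $max(hd(X)) = max(X)$ is element-wise: it picks a nonzero $x \in X^{max(X)}$, assumes $x \in rad(X)$, looks at the maximal $n$ with $x \in rad^n(X) = Xrad^n(A)$, and uses the non-positive grading to force one of the factors $x_j$ to have degree $max(X)$ and lie outside $rad(X)$. You instead prove the structural decomposition $rad(A) = rad(A^0)\oplus A^{<0}$ and then observe that $(X\,rad(A))^{max(X)} = X^{max(X)}rad(A^0)$, so Nakayama's lemma applied to the $A^0$-module $X^{max(X)}$ finishes the job. Your route is more conceptual and has the side benefit of deriving, rather than citing, the fact that $rad(A)$ is a graded ideal (the paper quotes Kelarev for this); the paper's route avoids needing the identification $rad(A)^0 = rad(A^0)$ and works directly with radical powers. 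For the socle equation the two arguments are essentially the same reduction to degree $min(X)$: the paper notes $X^{min(X)}$ is a nonzero finite-dimensional submodule and hence meets $soc(X)$, while you phrase it via the annihilator characterization $soc(X) = \{x : x\cdot rad(A) = 0\}$ and the nonvanishing of $soc_{A^0}(X^{min(X)})$; both hinge on $X^{min(X)}\cdot A^{<0} = 0$. One small presentational point: your claim that $min(soc(X)) \ge min(X)$ is ``immediate'' implicitly uses that $soc(X)$ is a graded submodule, but this does follow from your annihilator description since $rad(A)$ is graded, so there is no gap.
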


\begin{proof}
The radical of $A$ is a graded submodule of $A$ (see Kelarev, \cite{kelarev1992jacobson}), and so $rad(X) = Xrad(A)$ is a graded submodule of $X$.  Thus $hd(X) = X/rad(X)$ is graded with $supp(hd(X))$ $\subset supp(X)$.  Therefore $max(hd(X)) \le max(X)$.

To establish the reverse inequality, take a nonzero element $x\in X^{max(X)}$.  If $x \notin rad(X)$, then the image of $x$ in $hd(X)$ is a nonzero element in degree $max(X)$, and we are done.  Suppose $x \in rad(X)$.  Note that since $X$ is finitely generated and $A$ is finite-dimensional, $X$ is also finite dimensional.  Thus $rad^k(X)$ becomes zero for sufficiently large $k$.  Since $x$ is nonzero, there is a maximum $n> 0$ such that $x \in rad^n(X) = Xrad^n(A)$.  Write $x = \sum_{i=1}^m x_i a_i$ for some homogeneous $a_i \in rad^n(A), x_i \in X$.  Without loss of generality, we may assume that all terms are nonzero and that $deg(x_i a_i) = deg(x)$ for all $i$.  Since $deg(x) = max(X)$ and $A$ is non-positively graded, we must have that $deg(x_i) = max(X)$ and $deg(a_i) = 0$ for all $i$.  Since each $a_i \in rad^n(A)$ and $x \notin rad^{n+1}(X)$, there must be some $j$ such that $x_j \notin rad(X)$.  Thus we have obtained a nonzero $x_j \in X^{max(X)} - rad(X)$, and so $max(hd(X)) = max(X)$.

For the second equation, note that $soc(X)$ is a graded submodule of $X$ (see N\v{a}st\v{a}sescu and Van Oystaeyen, \cite{n1985note}).  Thus $supp(soc(X)) \subset supp(X)$ and so $min(soc(X)) \ge min(X)$.

For the reverse inequality, it suffices to show that $soc(X) \cap X^{min(X)} \neq 0$.  Since $A$ is non-positively graded, $X^{min(X)}A \subset X^{min(X)}$ and so $X^{min(X)}$ is a submodule of $X$.  Since $X$ is finite-dimensional, $X^{min(X)}$ has a simple submodule and thus has nonzero intersection with $soc(X)$.  Therefore $min(soc(X)) = min(X)$.
\end{proof}

\begin{proposition}
\label{loop lemma}
Let $X \in Ob(A\text{-grmod})$.  Then\\
1)  $max(\Omega X) \le max(X)$\\
2)  $min(\Omega' X) \ge min(X)$\\
3)  $max(P^{-n}_X(n)) = max(\Omega^nX(n)) \le max(X)-n$\\
4)  $min(I^n_X(-n)) = min((\Omega')^n X(-n)) \ge min(X)+n$\\
(See Section \ref{resolutions} for notation.)
\end{proposition}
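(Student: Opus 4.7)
The plan is to reduce all four parts to two basic facts: that $A$ is unital and non-positively graded, so $max(A) = 0$; and that the standard resolutions of Section~\ref{resolutions} realize $\Omega X$ as a graded submodule of $X \otimes_k A$ and $\Omega' X$ as a graded quotient of $\Hom^\bullet_{A\grmod}(A^{op} \otimes_k A, X)$.

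For (1), since $\Omega X$ is the kernel of $X \otimes_k A \twoheadrightarrow X$, it is a graded submodule of $X \otimes_k A$, and so $max(\Omega X) \le max(X \otimes_k A) = max(X) + max(A) = max(X)$. For (2), the injective resolution provides a short exact sequence $0 \to X \to I^0_X \to \Omega' X \to 0$ of graded modules, so $\Omega' X$ is a graded quotient of $I^0_X$ and it suffices to show $min(I^0_X) = min(X)$. Since $A^{op} \otimes_k A$ is free as a graded right $A$-module on any homogeneous $k$-basis of $A^{op}$, the degree $n$ component of $I^0_X$ identifies with $\bigoplus_i \Hom_k((A^{op})^i, X^{i+n})$; this is nonzero exactly when there exists $i \in supp(A)$ with $i+n \in supp(X)$. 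Because $0 \in supp(A)$ and $supp(A) \subset (-\infty, 0]$, the smallest such $n$ is achieved at $i = 0$ with $i+n = min(X)$, which yields $min(I^0_X) = min(X)$ and hence $min(\Omega' X) \ge min(X)$.

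Parts (3) and (4) follow by iteration and a grading shift. For (3), the formula $P^{-n}_X = \Omega^n X \otimes_k A$ together with $max(A) = 0$ gives $max(P^{-n}_X(n)) = max(\Omega^n X(n))$, and iterating (1) combined with $max(Y(n)) = max(Y) - n$ yields the required bound $max(\Omega^n X(n)) \le max(X) - n$. Part (4) is dual: the $n = 0$ calculation generalizes verbatim to $min(I^n_X) = min((\Omega')^n X)$, iterating (2) gives $min((\Omega')^n X) \ge min(X)$, and shifting by $(-n)$ completes the proof. The main delicate step is the computation $min(I^0_X) = min(X)$, which rests on the freeness of $A^{op} \otimes_k A$ as a right $A$-module and the fact that $1 \in A^0$; everything else is bookkeeping with graded supports.
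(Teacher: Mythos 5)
Your proof is correct and follows essentially the same route as the paper: everything reduces to the non-positivity of the grading on $A$ (so $max(A)=0$, $max(I)\le 0$) together with the realizations of $\Omega X$ and $\Omega' X$ coming from the standard resolutions. The only cosmetic difference is in part (2), where the paper bounds $min(\Hom^\bullet_{A\grmod}(I,X)) \ge min(X) - max(I)$ directly from the bimodule description of $\Omega'$, whereas you realize $\Omega' X$ as the quotient $I^0_X/X$ and compute $min(I^0_X)=min(X)$ degreewise; both arguments are valid and of comparable length.
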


\begin{proof}
Since $I \subset A^{op}\otimes_k A$, we have that $max(I) \le max(A^{op} \otimes_k A) = 0$.  Thus $max(\Omega X) =  max(X \otimes_A I) \le max(I) + max(X) \le max(X)$.

Similarly, $min(\Omega'X) = min(\Hom^\bullet_A(I, X)) \ge min(X) - max(I) \ge min(X)$.

The last two equations follow from the first two and the definitions of the standard projective and injective resolutions.
\end{proof}

Recall from Section \ref{resolutions} that the functor $\Omega(1)$ is an autoequivalence of $A\grstab$ and $A\dgstab$.  By replacing $A\grstab$ and $A\dgstab$ with equivalent categories $\widetilde{A\grstab}$ and $\widetilde{A\dgstab}$ (see Section \ref{autoequivalence}), we may assume without loss of generality that $\Omega(1)$ is an automorphism of both categories.  We let $\Omega^{-1}$ denote the inverse of $\Omega$, and we shall identify it with the isomorphic functor $\Omega'$.

Going forward, we shall write $\Omega^{-n}$ to mean $(\Omega')^n$ for $n \ge 0$, even on $A\grmod$ and $A\dgmod$.  This is a dangerous abuse of notation as $\Omega$ is not invertible in either category.  However, adopting this convention allows us to greatly simplify certain expressions and is safe as long as we avoid expressions of the form $\Omega\Omega^{-1}X$ outside the stable category.

We obtain the following corollary of Proposition \ref{loop lemma}.

\begin{proposition}
\label{finiteness lemma}
Let $X, Y \in Ob(A\text{-grmod})$.  Then $\Hom_{A\text{-grstab}}(X, \Omega^nY(n)) = 0$ for all but finitely many $n \in \mathbb{Z}$.
\end{proposition}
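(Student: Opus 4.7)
The plan is to combine the support bounds from Proposition \ref{loop lemma} with the first proposition of the section (vanishing of graded Hom when supports are disjoint). The strategy is to show that for $|n|$ sufficiently large, the supports $supp(X)$ and $supp(\Omega^n Y(n))$ are disjoint, which forces $\Hom_{A\grstab}(X, \Omega^n Y(n)) = 0$.

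First I would dispatch the case $n \ge 0$. By part (3) of Proposition \ref{loop lemma}, we have $max(\Omega^n Y(n)) \le max(Y) - n$. Since $X$ is a nonzero finitely generated module over a finite-dimensional algebra, $min(X)$ is finite, so as soon as $n > max(Y) - min(X)$ we get $max(\Omega^n Y(n)) < min(X)$, hence the supports are disjoint and the graded stable Hom vanishes.

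For $n < 0$, I would write $n = -m$ with $m > 0$ and invoke the convention $\Omega^{-m} = (\Omega')^m$ from the paragraph just above the statement. Part (4) of Proposition \ref{loop lemma} then gives $min(\Omega^{-m} Y(-m)) \ge min(Y) + m$. Once $m > max(X) - min(Y)$, this minimum exceeds $max(X)$, so again the supports of $X$ and $\Omega^n Y(n)$ are disjoint and the Hom space is zero.

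Combining the two bounds, $\Hom_{A\grstab}(X, \Omega^n Y(n))$ can be nonzero only for $n$ in the finite interval $[\,min(Y) - max(X),\ max(Y) - min(X)\,]$, proving the claim. There is no real obstacle here: the statement is essentially a bookkeeping consequence of the fact that $\Omega$ shrinks support from above while $\Omega'$ shifts it from below, combined with the non-positivity of the grading on $A$ which was the content of Proposition \ref{loop lemma}. The only thing to be careful about is keeping the sign of $n$ straight and correctly interpreting $\Omega^n$ for negative $n$ under the stated abuse of notation.
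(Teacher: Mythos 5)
Your proposal is correct and follows essentially the same route as the paper: both apply the support bounds of Proposition \ref{loop lemma} to show $max(\Omega^n Y(n)) < min(X)$ for $n \gg 0$ and $min(\Omega^{-n}Y(-n)) > max(X)$ for $n \gg 0$, then conclude by the disjoint-support vanishing of graded Hom. The only cosmetic difference is that you cite parts (3) and (4) directly while the paper iterates parts (1) and (2); this is immaterial.
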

\begin{proof}
By Proposition \ref{loop lemma}, $max(\Omega Y(1)) \le max(Y(1)) = max(Y) -1$ and so $max(\Omega^n Y(n)) \le max(Y) - n$.  Thus for $n>>0$, we have that $max(\Omega^n Y(n)) < min(X)$.  Similarly, $min(\Omega^{-n} Y(-n)) \ge min(Y) + n$, and so for $n>>0$ we have that $min(\Omega^{-n} Y(-n)) > max(X)$.  Thus $\Hom_{A\text{-grmod}}(X, \Omega^n Y(n)) = 0$ for all but finitely many $n$.
\end{proof}

We are now ready to state the main definitions.  

\begin{definition}
\label{nice category}
Let $\mathcal{C}(A)$ be the category given by:\\
1)  $Ob(\mathcal{C}(A)) = Ob(A\grstab)$\\
2)  For $X, Y \in Ob(\mathcal{C}(A))$, $\Hom_{\mathcal{C}(A)}(X, Y) =  \bigoplus_{n \in \mathbb{Z}}\Hom_{A\grstab}(X, \Omega^nY(n))$\\
3)  For $(f_n)_{n\in \mathbb{Z}} : X \rightarrow Y$ and $(g_m)_{m\in \mathbb{Z}} : Y \rightarrow Z$, define composition by\\
$(g_m)\circ(f_n) = ( \sum_{i\in \mathbb{Z}} \Omega^i g_{j-i}(i) \circ f_i)_{j\in \mathbb{Z}}$.
\end{definition}

\begin{remark}
If we do not wish to assume that $\Omega(1)$ is an automorphism of $A\grstab$, natural isomorphisms $\varepsilon_{n,m}: \Omega^n \Omega^m \rightarrow \Omega^{n+m}$ satisfying the appropriate coherence conditions must be inserted into the composition formula.
\end{remark}

We note that the sum in the composition formula is finite by Proposition \ref{finiteness lemma}.  It is clear that $\mathcal{C}(A)$ is an additive category.  In fact, $\mathcal{C}(A)$ is precisely the orbit category $A\grstab/\Omega(1)$ as defined by Keller, \cite{Keller2005}.  Keller shows that while such a category need not be triangulated, it can always be included in a "triangulated hull".  We shall see that $A\dgstab$ is the triangulated hull of $\mathcal{C}(A)$.

\begin{proposition}
\label{krull-schmidt}
The orbit category $\mathcal{C}(A)$ is a Krull-Schmidt category.
\end{proposition}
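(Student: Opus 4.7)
The plan is to reduce the Krull-Schmidt property to the statement that, for every nonzero object $X$ that is indecomposable in $A\grstab$, the endomorphism ring $R := \End_{\mathcal{C}(A)}(X)$ is local. Since the objects of $\mathcal{C}(A)$ coincide with those of the Krull-Schmidt category $A\grstab$, every object decomposes as a finite direct sum of such $X$, and this decomposition persists in $\mathcal{C}(A)$ via the natural functor. By Proposition \ref{finiteness lemma} together with the finite-dimensionality of $A$, the ring $R = \bigoplus_n \Hom_{A\grstab}(X, \Omega^n X(n))$ is a finite-dimensional $k$-algebra, $\mathbb{Z}$-graded, whose degree-zero part $R_0 = \End_{A\grstab}(X)$ is local with maximal ideal $\mathfrak{m}_0$.

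The next step is to isolate a candidate maximal ideal in $R$. By Proposition \ref{loop lemma}, $\max(\Omega^n X(n)) \le \max(X) - n$ for $n \ge 0$ and $\min(\Omega^n X(n)) \ge \min(X) - n$ for $n \le 0$; applied to indecomposable non-projective representatives, these inequalities force $X \not\cong \Omega^n X(n)$ in $A\grstab$ whenever $n \neq 0$. Consequently, for $n \neq 0$, any composition $X \to \Omega^n X(n) \to X$ in $A\grstab$ factors through a non-isomorphic indecomposable and so cannot be an automorphism of $X$; it therefore lies in $\mathfrak{m}_0$. Expanding the composition formula in Definition \ref{nice category} yields $(fg)_0 \equiv f_0 g_0 \pmod{\mathfrak{m}_0}$ for all $f, g \in R$, so $\pi: f \mapsto \overline{f_0} \in R_0/\mathfrak{m}_0$ is a ring homomorphism with kernel $I := \{f \in R : f_0 \in \mathfrak{m}_0\}$. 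Thus $R/I$ is a division ring and $I$ is a maximal two-sided ideal.

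Since $R$ is finite-dimensional, it suffices to show that $I$ is nilpotent, which I expect to be the main obstacle. Given $f_1, \ldots, f_N \in I$, each term in the expansion of $f_1 \cdots f_N$ is a composition $X = Y_0 \to Y_1 \to \cdots \to Y_N$ in $A\grstab$, where $Y_j = \Omega^{p_j} X(p_j)$ for partial sums $p_j$ of the chosen degree shifts. A non-vanishing composition forces $|p_j| \le D := \max(X) - \min(X)$ throughout, since otherwise $\Hom_{A\grstab}(X, \Omega^{p_j} X(p_j)) = 0$ by the support estimate used to prove Proposition \ref{finiteness lemma}. Hence intermediate objects lie in the finite set $\{\Omega^p X(p) : |p| \le D\}$, whose indecomposable non-projective representatives in $A\grmod$ share a common length bound $L$. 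At each step, the morphism $h_j$ is either between non-isomorphic indecomposables (if the degree shift is nonzero) or a shift of $f_{j,0} \in \mathfrak{m}_0$, which lies in the Jacobson radical of the relevant local endomorphism ring; either way, lifting to $A\grmod$ produces a non-isomorphism between indecomposable non-projectives of length at most $L$. The Harada-Sai lemma then forces such a composition to vanish in $A\grmod$, and hence in $A\grstab$, once $N \geq 2^L$. Therefore $I^N = 0$ for $N$ sufficiently large, so $I = \operatorname{Jac}(R)$, and $R/\operatorname{Jac}(R) \cong R_0/\mathfrak{m}_0$ is a division ring. Thus $R$ is local, and combined with the reduction above, $\mathcal{C}(A)$ is Krull-Schmidt.
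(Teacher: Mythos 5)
Your proof is correct and follows the same overall architecture as the paper's: reduce to showing that an indecomposable object $X$ of $A\grstab$ has local endomorphism ring in $\mathcal{C}(A)$, identify the candidate maximal ideal $I$ of elements whose degree-zero component lies in $rad(\End_{A\grstab}(X))$ (using the support estimates of Proposition \ref{loop lemma} to see that any composite $X \rightarrow \Omega^nX(n) \rightarrow X$ with $n \neq 0$ lands in the radical), and then prove that $I$ is nilpotent. The two arguments diverge only at the nilpotence step, which is indeed the crux. The paper stays inside $A\grstab$: it bounds the window of degrees that can occur, applies the pigeonhole principle to find a single object $\Omega^rX(r)$ recurring many times as an intermediate codomain, observes that each intervening composite lies in $rad(\End_{A\grstab}(\Omega^rX(r)))$, and concludes from the nilpotence of the radical of that finite-dimensional local ring. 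You instead lift to $A\grmod$ and invoke the Harada--Sai lemma, using that the intermediate objects range over a finite set of indecomposables of bounded length and that every factor is a non-isomorphism between indecomposables. Both are sound; yours is shorter given Harada--Sai as a black box, while the paper's is self-contained. The one place in your version where care is needed (and which you do address, but should keep explicit) is the passage between categories: the objects $\Omega^{p_j}X(p_j)$ must be replaced by their indecomposable non-projective representatives in $A\grmod$, a lift to $A\grmod$ of a stable non-isomorphism between such representatives is again a non-isomorphism, and vanishing of the lifted composite in $A\grmod$ then implies vanishing in $A\grstab$.
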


\begin{proof}
Since $A\grstab$ is Krull-Schmidt and the natural map $A\grstab \rightarrow \mathcal{C}(A)$ is additive and essentially surjective, any object of $\mathcal{C}(A)$ can be written as a direct sum of indecomposable objects in $A\grstab$.  Thus, it suffices to show that any indecomposable object $X$ of $A\grstab$ has local endomorphism ring in $\mathcal{C}(A)$.

First, we claim that for any indecomposable $X$ in $A\grmod$ and any $i \neq 0$, any map $f: X \rightarrow \Omega^iX(i) \rightarrow X$ lies in $rad(\End_{A\grmod}(X))$.  By Proposition \ref{loop lemma}, $supp(X) \nsubseteq supp(\Omega^iX(i))$, hence $f$ cannot be surjective and thus is not an isomorphism.  Since $\End_{A\grmod}(X)$ is a local finite-dimensional algebra, $f$ lies in the unique maximal two-sided ideal, which is equal to the Jacobson radical.  Since $\End_{A\grstab}(X)$ is a quotient of $\End_{A\grmod}(X)$, we also have that the image of $f$ lies in $rad(\End_{A\grstab}(X))$.

Let $X \in A\grstab$ be indecomposable.  We must show that $\End_{\mathcal{C}(A)}(X)$ is local.  Write $V_n = \Hom_{A\grstab}(X, \Omega^n(X)(n))$ for the $n$-th graded component of $\End_{\mathcal{C}(A)}(X)$.  We claim that the subspace $V := rad(V_0) \oplus \bigoplus_{n\neq 0} V_n$ is the unique maximal two-sided ideal of $\End_{\mathcal{C}(A)}(X)$.

To show $V$ is a two-sided ideal, take $f_i$ in the $i$th graded piece of $V$ and $g_j \in V_j$.  If $i + j \neq 0$, then $g_j f_i \in V$.  If $i = -j \neq 0$, then $g_j f_i : X \rightarrow \Omega^iX(i) \rightarrow X$ is an element of $rad(V_0)$.  Finally, if $i = j = 0$, then we immediately have that $g_0 f_0 \in rad(V_0)$.  Thus $V$ is a left ideal, and a parallel argument shows it is a right ideal.

Clearly, $\End_{\mathcal{C}(A)}(X)/V \cong \End_{A\grstab}(X)/rad(\End_{A\grstab}(X))$, which is a division ring since $X$ is indecomposable.  Thus $V$ is maximal.

To show $V$ is the unique maximal ideal, it suffices to show that it is equal to $rad(\End_{\mathcal{C}(A)}(X))$.  As a maximal two-sided ideal, $V$ contains the radical.  For the reverse inclusion, it suffices to show that every element $f = (f_i) \in V$ is nilpotent, since the Jacobson radical contains every nil ideal.

Let $N$ be such that $V_i = 0$ for all $| i | > N$.  Then note that the $i$-th graded piece of $f^{n}$ is a sum of maps of the form $X \xrightarrow{h_{1}} \Omega^{i_1}X(i_1) \xrightarrow{h_2} \Omega^{i_2}X(i_2) \cdots \xrightarrow{h_{n}} \Omega^{i}X(i)$, where the $h_j$ are translations of the $f_k$ by various powers of $\Omega(1)$.  If $| i_j| > N$ for any $j$, the composite map is zero, so we may assume that $| i_j | \le N$ for all $1 \le j \le n$.  If $n > m(2N+1)$ for some $m \ge 1$, then by the pigeonhole principle there exists $-N \le r \le N$ such that $\Omega^rX(r)$ appears as the codomain of one of the $h_j$ at least $m+1$ times.  Grouping terms, we can then express the composition as $X \xrightarrow{\phi_0} \Omega^rX(r) \xrightarrow{\phi_1} \Omega^rX(r) \xrightarrow{\phi_2} \cdots \xrightarrow{\phi_m} \Omega^rX(r) \xrightarrow{\phi_{m+1}} \Omega^iX(i)$, where the $\phi_k$ are compositions of successive $h_j$.

I claim that for all $1 \le k \le m$, $\phi_k$ lies in $rad(\End_{A\grstab}(\Omega^rX(r)))$.  Note that since $f_0$ lies in the radical of $\End_{A\grstab}(X)$ and $\Omega(1)$ is an autoequivalence, we have that $\Omega^r(f)(r)$ lies in the radical of the local ring $\End_{A\grstab}(\Omega^rX(r))$.  Thus, if one of the factors of $\phi_k$ is $\Omega^r(f_0)(r)$, then we are done.  If not, then we must have that $\phi_k$ factors through $\Omega^jX(j)$ for some $j \neq r$, which again guarantees that $\phi_k$ lies in the radical.

We have shown that for $n > m(2N+1)$, each component of $f^n$ is a sum of terms of the form $\prod_{i=0}^{m+1}\phi_i$, with $\phi_i \in rad(\End_{A\grstab}(\Omega^rX(r)))$ for each $1 \le i \le m$ and some $-N \le r \le N$.  But $rad(\End_{A\grstab}(\Omega^rX(r)))$ is nilpotent for each $r$, hence $f$ is nilpotent.
\end{proof}

We now define the inclusion functor $F_A: \mathcal{C}(A) \rightarrow A\text{-dgstab}$.  The obvious choice would be for $F_A$ to act as the identity on objects and send the morphism $(f_n)_n : X \rightarrow Y$ to the sum of its components $\sum_{n\in \mathbb{Z}} \psi_{n, Y} \circ f_n$, where the $\psi_{n,Y}: \Omega^n Y(n) \rightarrow Y$ are isomorphisms chosen so that all the summands share a common domain.  However, in order for this process to be functorial, the morphisms $\psi_{n, Y}$ must be satisfy appropriate compatibility conditions.

\begin{lemma}
\label{coherence lemma}
There exists a family of natural isomorphisms $\{ \psi_{n}: \Omega^n(n)  \rightarrow id_{A\dgstab} \mid n \in \mathbb{Z}\}$ satisfying:\\
i)  $\psi_0= id_{A\dgstab}$\\
ii)  For all $n, m \in \mathbb{Z}, \psi_m \circ (\psi_n \circ\Omega^m (m)) = \psi_{n+m}$
\end{lemma}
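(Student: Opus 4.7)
The plan is to bootstrap the entire family from a single natural isomorphism $\psi_1$ and define the rest by iterated composition. From Section \ref{resolutions}, the natural transformation $(-1) \to \Omega$ of endofunctors of $D^b_{dg}(A)$ descends to a natural isomorphism in $A\dgstab$; whiskering its codomain by the shift $(1)$ and then inverting yields a natural isomorphism $\psi_1: \Omega(1) \to id_{A\dgstab}$. Because we have replaced $A\dgstab$ by an equivalent category on which $\Omega(1)$ is a strict automorphism (Section \ref{autoequivalence}), we have strict equalities $\Omega^n(n) \circ \Omega^m(m) = \Omega^{n+m}(n+m)$, which removes any need to carry around associators.

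For $n \ge 0$ I would define $\psi_0 := id_{A\dgstab}$ and recursively $\psi_{n+1} := \psi_1 \circ (\psi_n \circ \Omega(1))$; since $\psi_n \circ \Omega(1)$ is a natural transformation $\Omega^{n+1}(n+1) \to \Omega(1)$, composing with $\psi_1$ lands in $id_{A\dgstab}$ as required. For $n < 0$ I would set $\psi_n := (\psi_{-n} \circ \Omega^n(n))^{-1}$; this is forced by (ii) in the special case $(n,m) = (-n, n)$, and it makes sense because $\psi_{-n} \circ \Omega^n(n)$ is a natural isomorphism $id = \Omega^{-n}\Omega^n(0) \to \Omega^n(n)$. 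Condition (i) is built in.

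The heart of the argument is (ii), which I would prove first in the positive case $n, m \ge 0$ by induction on $m$ with $n$ fixed. The base $m = 0$ is automatic. For the inductive step, whiskering the induction hypothesis $\psi_{n+m} = \psi_m \circ (\psi_n \circ \Omega^m(m))$ by $\Omega(1)$ and using that whiskering distributes over vertical composition gives
\[
\psi_{n+m} \circ \Omega(1) = (\psi_m \circ \Omega(1)) \circ (\psi_n \circ \Omega^{m+1}(m+1)),
\]
and precomposing with $\psi_1$ recognises $\psi_1 \circ (\psi_m \circ \Omega(1)) = \psi_{m+1}$ and yields $\psi_{n+m+1} = \psi_{m+1} \circ (\psi_n \circ \Omega^{m+1}(m+1))$, closing the induction. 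The remaining sign cases of (ii) follow by post-whiskering both sides of the desired identity by a suitable power $\Omega^k(k)$ until all indices become non-negative, then invoking the positive case together with uniqueness of two-sided inverses for isomorphisms.

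I expect the main obstacle to be bookkeeping rather than anything conceptual: the case splits for negative $n$ and $m$ each require a slightly different rearrangement before they reduce to the positive case. The strictness of $\Omega(1)$ as an automorphism (as opposed to a mere autoequivalence) is what makes this bookkeeping feasible; without the replacement of Section \ref{autoequivalence}, one would need to manage the associators $\varepsilon_{n,m}$ mentioned in the remark after Definition \ref{nice category}, turning a routine induction into a genuine coherence verification.
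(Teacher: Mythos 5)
Your proposal is correct and follows essentially the same route as the paper: both bootstrap the family from the natural isomorphism $\psi_1$ of Section \ref{resolutions}, set $\psi_0 = id$, generate the positive $\psi_n$ by the recursion $\psi_{n+1} = \psi_1 \circ (\psi_n \circ \Omega(1))$, and obtain the negative ones as the inverses forced by condition ii). The paper simply declares the verification of ii) ``clear,'' whereas you supply the induction on $m$ and the reduction of the signed cases by whiskering; your direct formula $\psi_n = (\psi_{-n}\circ\Omega^n(n))^{-1}$ for $n<0$ is equivalent to the paper's recursive definition.
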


\begin{proof}
Let $\psi_1 : \Omega(1) \rightarrow id_{A\dgstab}$ be the natural isomorphism defined in Section \ref{resolutions}.  Let $\psi_{-1} = (\psi_1 \circ \Omega^{-1}(-1) )^{-1}: \Omega^{-1}(-1) \rightarrow id_{A\dgstab}$.  Let $\psi_0 = id_{A\dgstab}$.  For $n \ge 2$, recursively define $\psi_n = \psi_1 \circ (\psi_{n-1} \circ \Omega (1))$ and analogously for $n \le -2$.  It is clear that $\{\psi_n\}$ satisfies i) and ii).
\end{proof}

\begin{remark}  
If we do not assume that $\Omega(1)$ is an autormorphism of $A\dgstab$, we must again insert appropriately chosen natural isomorphisms $\varepsilon_{n,m}: \Omega^n \Omega^m \rightarrow \Omega^{n+m}$ into condition ii).
\end{remark}

\begin{definition}
\label{inclusion functor}
Let $\psi_n: \Omega^n(n) \rightarrow id_{A\dgstab}$ be the natural isomorphisms defined in Lemma \ref{coherence lemma}.  
Let $F_A: \mathcal{C}(A) \rightarrow A\text{-dgstab}$ be the functor given by:\\
1)  $F_A$ acts as the identity on objects.\\
2)  Given $f = (f_n)_{n\in \mathbb{Z}} \in \Hom_{C(A)}(X, Y)$, let $F_A(f) = \sum_{n\in \mathbb{Z}}  \psi_{n, Y} \circ f_n$.
\end{definition}

\begin{proposition}
$F_A$ as defined above is a functor.
\end{proposition}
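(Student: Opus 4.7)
The plan is to verify the two functor axioms: preservation of identities and preservation of composition, after first noting that $F_A$ is well-defined. Well-definedness requires that the sum $\sum_{n \in \mathbb{Z}} \psi_{n,Y} \circ f_n$ makes sense in $A\dgstab$. Since $(f_n) \in \Hom_{\mathcal{C}(A)}(X,Y)$ is by definition an element of $\bigoplus_n \Hom_{A\grstab}(X, \Omega^n Y(n))$, only finitely many $f_n$ are nonzero, so the sum is finite; moreover Proposition \ref{finiteness lemma} already guarantees this holds at the level of the orbit category. Each $\psi_{n,Y}$ is a morphism in $A\dgstab$ from $\Omega^n Y(n)$ to $Y$, so each summand $\psi_{n,Y} \circ f_n$ is a well-defined morphism $X \to Y$ in $A\dgstab$, and $F_A$ is evidently $k$-linear.

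Preservation of identities is immediate: the identity of $X$ in $\mathcal{C}(A)$ is the tuple with $f_0 = \mathrm{id}_X$ and $f_n = 0$ for $n \neq 0$, so $F_A(\mathrm{id}_X) = \psi_{0,X} \circ \mathrm{id}_X = \mathrm{id}_X$ by condition (i) of Lemma \ref{coherence lemma}.

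The substantive step is to verify $F_A(g \circ f) = F_A(g) \circ F_A(f)$ for $f = (f_n): X \to Y$ and $g = (g_m): Y \to Z$. Expanding the left side by the composition formula of Definition \ref{nice category} and reindexing $k := j-i$ gives
\[
F_A(g\circ f) = \sum_{j} \psi_{j,Z}\circ\!\Bigl(\sum_{i}\Omega^{i}g_{j-i}(i)\circ f_i\Bigr) = \sum_{i,k} \psi_{i+k,Z}\circ \Omega^{i}g_{k}(i)\circ f_i,
\]
while the right side expands as
\[
F_A(g)\circ F_A(f) = \sum_{i,k} \psi_{k,Z}\circ g_k \circ \psi_{i,Y}\circ f_i.
\]
Matching the summands reduces the problem to the identity
\[
\psi_{k,Z}\circ g_k \circ \psi_{i,Y} = \psi_{i+k,Z}\circ \Omega^{i}g_k(i)
\]
for every $i,k$ and every $g_k: Y \to \Omega^k Z(k)$. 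This in turn is the composite of two facts: first, naturality of the transformation $\psi_i: \Omega^i(i) \to \mathrm{id}$ applied to $g_k$ yields $g_k \circ \psi_{i,Y} = \psi_{i,\Omega^k Z(k)} \circ \Omega^i g_k(i)$; and second, the coherence condition (ii) of Lemma \ref{coherence lemma} evaluated at $Z$ (with $n=i$, $m=k$) gives $\psi_{k,Z} \circ \psi_{i,\Omega^k Z(k)} = \psi_{i+k,Z}$. Chaining these two equalities produces the required identity.

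The main obstacle is purely bookkeeping: keeping straight the distinction between the natural transformation $\psi_i$ evaluated at different objects (in particular $Y$ versus $\Omega^k Z(k)$) and ensuring the indices match after reindexing the double sum. Once the naturality square and the coherence relation (ii) are invoked in the correct order, the verification is mechanical; there are no subtle convergence issues because finiteness of the sums follows from Proposition \ref{finiteness lemma} and the fact that only finitely many $f_n$ and $g_m$ are nonzero to begin with.
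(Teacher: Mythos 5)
Your proof is correct and follows essentially the same route as the paper: identity preservation via condition (i), and composition via the naturality square for $\psi_i$ applied to $g_k$ followed by the coherence relation (ii), which are exactly the two steps in the paper's chain of equalities. The only cosmetic difference is that you match summands of the two expanded double sums rather than rewriting one side into the other.
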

\begin{proof}
Take $X \in \mathcal{C}(A)$.  The identity morphism on $X$ is $(\delta_{0,n} id_X)_n$.  Therefore $F_A((\delta_{0,n} id_X)_n) = \sum_{n\in \mathbb{Z}} \psi_{n, X} \circ \delta_{0,n}id_X = \psi_{0, X} = id_X$.  Thus $F_A$ preserves identity morphisms.

Given $(f_n)_n: X \rightarrow Y, (g_m)_m: Y \rightarrow Z$ in $\mathcal{C}(A)$, we have
\begin{eqnarray*}
F_A(g_m)\circ F_A(f_n) & = & (\sum_m \psi_{m, Z} \circ g_m)\circ (\sum_n \psi_{n, Y} \circ f_n)\\
& = & \sum_{m, n} \psi_{m, Z} \circ g_m \circ \psi_{n, Y} \circ f_n\\
& = & \sum_{m, n} \psi_{m, Z} \circ \psi_{n, \Omega^mZ(m)} \circ \Omega^n g_m(n) \circ f_n\\
& = & \sum_{m, n} \psi_{m+n, Z} \circ \Omega^n g_m(n) \circ f_n\\
& = & \sum_{j = m+n} \psi_{j, Z}\circ (\sum_{n} \Omega^{n} g_{j-n}(n) \circ f_n)\\
& = & F_A((g_m) \circ (f_n))
\end{eqnarray*}
\end{proof}


\subsection{Embedding $\mathcal{C}(A)$ into $A\dgstab$}

We now state the main theorem.

\begin{theorem}
\label{dgstable}
$F_A: \mathcal{C}(A) \rightarrow A\text{-dgstab}$ is fully faithful, and the image of $F_A$ generates $A\text{-dgstab}$ as a triangulated category.
\end{theorem}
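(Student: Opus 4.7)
My plan is to handle the full-faithfulness and generation claims using different techniques, and to rely heavily on the standard resolutions constructed in Section \ref{resolutions}.

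For generation, since $F_A$ is the identity on objects and its image contains that of the natural functor $A\grstab \to A\dgstab$, it suffices to show that $A\grmod$ generates $D^b_{dg}(A)$ as a triangulated category; passing to the Verdier quotient by $D^{perf}_{dg}(A)$ then gives generation of $A\dgstab$. I would prove generation in $D^b_{dg}(A)$ by induction on the size of the support of a bounded dg-module $M$. The key observation is that because $A$ is non-positively graded, the dg-truncation $\tau^{dg}_{\le n}M := \bigoplus_{i<n} M^i \oplus \ker(d^n_M)$ is a genuine dg-submodule of $M$, and the quotient $M/\tau^{dg}_{\le n}M$ has cohomology concentrated in degree $n$ equal to $H^n(M)$ and is therefore quasi-isomorphic in $D^b_{dg}(A)$ to the graded module $H^n(M)$. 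The resulting short exact sequence produces a triangle in $D^b_{dg}(A)$ that strictly reduces the support of $M$, and the base case of single-degree support is automatically a graded module with zero differential.

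For full-faithfulness, I would compute $\Hom_{A\dgstab}(X,Y)$ for $X,Y \in A\grmod$ and match it with $\bigoplus_n \Hom_{A\grstab}(X, \Omega^n Y(n))$ via $F_A$. The first step is to use the standard injective resolution $I^\bullet_Y$: since $A$ is self-injective one has $I^\bullet_Y \in Comp^+(A\grproj)$, and the excerpt gives $\Hom_{D_{dg}(A)}(X, \widehat{I}_Y) \cong \Hom_{Ho_{dg}(A)}(X, \widehat{I}_Y)$. Unpacking a closed degree-zero dg-morphism $X \to \widehat{I}_Y = \bigoplus_{m \ge 0} I^m_Y(-m)$ componentwise and using the identification $\ker(d^m_{I_Y}) = \Omega^{-m}Y$ produces elements of $\Hom_{A\grmod}(X, \Omega^{-m}Y(-m))$; the homotopy relation combined with self-injectivity (which identifies ``factoring through $I^{m-1}_Y(-m)$'' with ``factoring through a projective'') further reduces these to $\Hom_{A\grstab}(X, \Omega^{-m}Y(-m))$ for $m \ge 1$.

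This computation by itself captures only the $n \le 0$ portion of the orbit sum, so to include the $n > 0$ summands I would iterate the isomorphism $Y \cong \Omega^k Y(k)$ in $A\dgstab$ (coming from the natural transformation $(-1) \to \Omega$ of Section \ref{resolutions}) and rerun the injective-resolution computation with $Y$ replaced by $\Omega^k Y(k)$ for $k$ sufficiently large. By Proposition \ref{finiteness lemma}, only finitely many orbit summands are nonzero, so for $k$ large enough the shifted computation captures them all; the coherence of the $\psi_n$'s from Lemma \ref{coherence lemma} ensures that the resulting identifications are compatible with $F_A$. I expect the main obstacle to be showing that the combined map $\bigoplus_n \Hom_{A\grstab}(X, \Omega^n Y(n)) \to \Hom_{A\dgstab}(X, Y)$ really is a direct-sum bijection, with faithfulness following by separating the summands using the componentwise structure of the injective-resolution Hom computation and fullness following by representing each morphism via the Verdier calculus of fractions and decomposing it via Proposition \ref{finiteness lemma}.
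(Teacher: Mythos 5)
Your overall route is closer to the paper's than it may appear: the injective resolution of $\Omega^k Y(k)$ for $k\gg 0$ is, up to reindexing, exactly the paper's ``bridge complex'' $R_{X,Y}=B_Y^{\ge N}$ (the choice $N$ with $max(\Omega^{-N}Y(-N))<min(X)$ is your ``$k$ sufficiently large,'' and it also kills the troublesome $m=0$ summand, which would otherwise be $\Hom_{A\grmod}$ rather than $\Hom_{A\grstab}$). Your componentwise computation of $\Hom_{Ho_{dg}}(X,\widehat{I_{Y'}})$ and the reduction modulo homotopy are Lemmas \ref{morphism lemma 1}--\ref{morphism lemma 3} of the paper. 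The genuine gap is the step you defer to ``the Verdier calculus of fractions'': you never establish that $\Hom_{A\dgstab}(X,Y)$ is computed by $\Hom_{Ho_{dg}}(X,\widehat{I_{Y'}})$, and this is where most of the work lies. For fullness, an arbitrary morphism in $A\dgstab$ is a roof $X\to M\xleftarrow{s} Y$ whose denominator has cone an arbitrary perfect dg-module, which need not arise from a complex of graded projectives; Proposition \ref{finiteness lemma} says nothing about such roofs. The paper straightens them (Lemma \ref{roof lemma}) by lifting the perfect cone through the standard projective resolution $\widehat{P_Y}$, using boundedness of its support and the decay of $max(P_Y^{-k}(k))$ to factor through a finite, strictly perfect truncation, and then further truncating using $\Hom(X,-)$-vanishing. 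For faithfulness, one must show that a class in $\Hom_{Ho_{dg}}(X,\widehat{I_{Y'}})$ that dies in the quotient already vanishes; the paper does this (Lemma \ref{computation lemma}) by showing such a map factors in $Ho_{dg}$ through $\Omega^{-N}Y(-N)$ and hence is zero by the degree estimate. ``Separating the summands'' does not substitute for this argument. Without these two steps your isomorphism is only with $\Hom_{D_{dg}(A)}(X,\widehat{I_{Y'}})$, not with $\Hom_{A\dgstab}(X,Y)$.

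On generation, your strategy matches the paper's in spirit, but the stated truncation claim is wrong as written: with $\tau^{dg}_{\le n}M=\bigoplus_{i<n}M^i\oplus\ker(d^n_M)$, the quotient $M/\tau^{dg}_{\le n}M$ has cohomology $H^i(M)$ in all degrees $i>n$, not concentrated in degree $n$; it is the \emph{sub}module, for $n$ the minimum of the cohomological support, that is quasi-isomorphic to the graded module $H^n(M)$, and the induction should run on the cohomological support (the support of $M$ itself need not shrink). This is repairable, and the paper's version is simpler still: $\ker(d_M)$ is a zero-differential dg-submodule, and one inducts on $\dim_k M$ using $0\to\ker(d_M)\to M\to M/\ker(d_M)\to 0$.
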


We prove the theorem with a sequence of lemmas below.

\begin{definition}
Let $X, Y \in A\grmod$, viewed as dg-modules with zero differential.  If $X$ and $Y$ are nonzero, let $N= N_{X,Y} := max\{n \le 0 \mid max(\Omega^{-n}Y(-n)) < min(X)\}$.  Define the \textbf{bridge complex from $X$ to $Y$} to be the complex $R^\bullet_{X,Y} = B^{\ge N}_Y$ (see Sections \ref{complexes} and \ref{resolutions} for notation) if $X$ and $Y$ are both nonzero, and $R^\bullet_{X,Y} = 0$ otherwise.
\end{definition}

By Proposition \ref{loop lemma}, $N_{X,Y}$ is well-defined.  We will omit the subscript when it is clear from context.

By unwinding the definitions, we obtain a quasi-isomorphism of complexes $\Omega^{-N}(Y)[-N] \hookrightarrow R^\bullet_{X,Y}$.  In particular, $H^N(R^\bullet_{X,Y}) \cong \Omega^{-N}(Y)$ and $H^k(R^\bullet_{X,Y}) = 0$ for $k \neq N$.  We also note that $ker(d^n_{R_{X,Y}})(-n) = \Omega^{-n}X(-n)$ for all $n \ge N$.

Morphisms in $A\dgstab$ can be represented as equivalence classes of roofs $X\xrightarrow{f} M \xleftarrow{s} Y$, where $s$ has perfect cone.  The primary challenge in understanding morphisms in $A\dgstab$ is that perfect dg-modules need not arise from complexes of graded projective modules.  However, by restricting our attention to dg-modules with zero differential, we can bypass this difficulty by using the bridge complexes defined above.

\begin{lemma}
\label{roof lemma}
Let $X, Y \in A\grmod$.  Then any morphism in $\Hom_{A\dgstab}(X, Y)$ can be expressed as a roof of the form
\begin{eqnarray*}
\begin{tikzcd}
X \arrow[dr, swap, "f"]& &Y \arrow[dl, "i"]\\
& \widehat{\tau_{\le 0}R_{X,Y}} &
\end{tikzcd}
\end{eqnarray*}
where $i$ is induced by the natural map $Y \rightarrow R^\bullet_{X,Y}$.
\end{lemma}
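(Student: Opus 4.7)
The plan has two main pieces. I would first verify that the right leg $i$ becomes an isomorphism in $A\dgstab$ by computing its cone, and then show that any $\phi \in \Hom_{A\dgstab}(X, Y)$ can be expressed as the desired roof by lifting the composite $i \circ \phi$ to a morphism $X \to \widehat{\tau_{\le 0} R_{X,Y}}$ in $D^b_{dg}(A)$.

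For the cone, $i$ embeds $Y$ as the submodule $\ker(d^0_R) \subset \widehat{\tau_{\le 0} R_{X,Y}}$ at position zero, so it is an injective dg-morphism. Its cokernel coincides with $\widehat{R^{<0}_{X,Y}}$, the hat of the sharp truncation, a bounded dg-module whose terms $R^k = B^k_Y = P^{k+1}_Y = \Omega^{-k-1}Y \otimes_k A$ for $N \le k \le -1$ are projective graded $A$-modules by the construction in Section \ref{resolutions}. Hence $C(i)$ is quasi-isomorphic to a bounded complex of projectives and thus lies in $D^{perf}_{dg}(A)$, so $i$ becomes an isomorphism in $A\dgstab$.

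For the lifting, set $\tilde{f} = i \circ \phi \in \Hom_{A\dgstab}(X, \widehat{\tau_{\le 0} R_{X,Y}})$; realizing $\tilde{f}$ as a morphism $f \in \Hom_{D^b_{dg}}(X, \widehat{\tau_{\le 0} R_{X,Y}})$ exhibits the roof $X \xrightarrow{f} \widehat{\tau_{\le 0} R_{X,Y}} \xleftarrow{i} Y$ as representing $\phi$. To describe $\Hom_{D_{dg}}(X, \widehat{\tau_{\le 0} R_{X,Y}})$ concretely, I would exploit the short exact sequence
\[ 0 \to \widehat{\tau_{\le 0} R^\bullet_{X,Y}} \to \widehat{R^\bullet_{X,Y}} \to \widehat{\tau_{> 0} R^\bullet_{X,Y}} \to 0, \]
whose quotient is acyclic because the only cohomology of $R^\bullet_{X,Y}$ sits at position $N \le 0$. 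The inclusion is therefore a quasi-isomorphism in $D_{dg}(A)$, and together with the equality $\Hom_{D_{dg}}(X, \widehat{R^\bullet_{X,Y}}) = \Hom_{Ho_{dg}}(X, \widehat{R^\bullet_{X,Y}})$ from Section \ref{Differential Graded Modules} (valid since $R^\bullet_{X,Y} \in Comp^+(A\grproj)$) it describes $\Hom_{D_{dg}}(X, \widehat{\tau_{\le 0} R_{X,Y}})$ as homotopy classes of dg-morphisms $X \to \widehat{R^\bullet_{X,Y}}$.

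The main obstacle is the descent from $D^b_{dg}$-morphisms to $A\dgstab$-morphisms: I must show that the canonical map $\Hom_{D^b_{dg}}(X, \widehat{\tau_{\le 0} R_{X,Y}}) \to \Hom_{A\dgstab}(X, \widehat{\tau_{\le 0} R_{X,Y}})$ is surjective, or equivalently that every roof $X \to M \leftarrow \widehat{\tau_{\le 0} R_{X,Y}}$ with perfect cone on the right leg is equivalent to a direct $D^b_{dg}$-morphism. I expect the crucial ingredient to be the defining condition $\max \Omega^{-N}Y(-N) < \min X$ for $N = N_{X,Y}$: this support bound should force the $\Hom$ groups from $X$ into the perfect dg-modules that arise in the cone triangles to vanish, thereby eliminating the obstruction to straightening the roof.
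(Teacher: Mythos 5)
Your opening step is fine: $C(i)$ is indeed $\widehat{R^{<0}_{X,Y}}$, a bounded complex of graded projectives, so $i$ becomes invertible in $A\dgstab$, and the lemma is therefore equivalent to surjectivity of the canonical map $\Hom_{D^b_{dg}(A)}(X, \widehat{\tau_{\le 0}R_{X,Y}}) \to \Hom_{A\dgstab}(X, \widehat{\tau_{\le 0}R_{X,Y}})$. But that surjectivity \emph{is} the entire content of the lemma, and your proposal stops exactly where the work begins. Concretely, an arbitrary element of $\Hom_{A\dgstab}(X,Y)$ is a roof $X \xrightarrow{g} M \xleftarrow{s} Y$ sitting in a triangle $P \xrightarrow{\alpha} Y \xrightarrow{s} M \to P(1)$ with $P$ strictly perfect; straightening the roof requires killing the image of $g$ under $\Hom_{D^b_{dg}(A)}(X, M) \to \Hom_{D^b_{dg}(A)}(X, P(1))$, and here $P$ is an \emph{arbitrary} strictly perfect dg-module, which need not arise from a complex of graded projectives. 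The bound $max(\Omega^{-N}Y(-N)) < min(X)$ defining $N$ says nothing about $\Hom(X, P(1))$, so the single support argument you invoke cannot close this gap.

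The missing idea is the first half of the paper's argument, which operates on the \emph{other} leg of the cone triangle, namely $\alpha: P \to Y$, rather than on maps out of $X$. Since $P$ is strictly perfect its support is bounded, while $max(P^{-k}_Y(k))$ is strictly decreasing in $k$ by Proposition \ref{loop lemma}; hence for $k \ll 0$ one has $max(\widehat{P^{<k}_Y}) < min(P)$, so $\Hom_{D_{dg}(A)}(P, \widehat{P^{<k}_Y}) = 0$ and $\alpha$ factors through $\widehat{P^{\ge k}_Y} \to Y$. A morphism of triangles then replaces $M$ by $\widehat{C(p_k)}$, which \emph{does} come from a complex of graded modules. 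Only after this reduction does the defining property of $N$ enter, to show $\Hom_{D^b_{dg}(A)}(X, \widehat{(P^{\ge k}_Y)^{\le N}}(1)) = 0$ and push the truncation point from $k$ up to $N+1$. So your proposal correctly identifies the second of the two support arguments but omits the first, which is precisely the one that disposes of the genuinely dg (non-complex) perfect modules — the difficulty the lemma exists to overcome.
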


\begin{proof}
If $X$ or $Y$ is zero, then the result is immediate, so assume neither $X$ nor $Y$ is zero. 
Any morphism $X \rightarrow Y$ in $A\dgstab$ can be represented as a roof
\begin{eqnarray*}
\begin{tikzcd}
X \arrow[dr, swap, "g"] & &Y \arrow[dl, "s"]\\
& M &
\end{tikzcd}
\end{eqnarray*}
where $M \in A\dgmod, g, s \in \text{Mor}(D^b_{dg}(A))$, and there is an exact triangle $P \xrightarrow{\alpha}Y \xrightarrow{s} M \xrightarrow{} P(1)$ in $D^b_{dg}(A)$, with $P \in D^{perf}_{dg}(A)$.  By changing $P$ up to quasi-isomorphism, we may assume without loss of generality that $P$ is strictly perfect.

Let $p_n$ denote the natural map of complexes $P^{\ge n}_Y \hookrightarrow P^\bullet_Y \twoheadrightarrow Y$ and let $i_n: Y \hookrightarrow C(p_n)$ denote the natural inclusion of complexes.  If $n \ge 1$, note that $p_n$ is the map from the zero complex to $Y$ and $i_n$ is the identity map on $Y$.  Note also that $C(p_{N+1}) = \tau_{\le 0}R^\bullet_{X,Y}$ and $\widehat{i_{N+1}} = i$.

We first show that every morphism can be expressed as a roof of the form
\begin{eqnarray*}
\begin{tikzcd}
X \arrow[dr]& &Y \arrow[dl, "\widehat{i_{k}}"]\\
& \widehat{C(p_{k})} &
\end{tikzcd}
\end{eqnarray*}
for some $k \le N+1$.

Since $p: P^\bullet_Y \twoheadrightarrow Y$ is a quasi-isomorphism of complexes, we obtain a morphism $\widehat{p}^{-1} \circ \alpha: P \rightarrow \widehat{P_Y}$ in $D_{dg}(A)$.  Since $P \in D^{perf}_{dg}(A)$, the underlying graded module of $P$ is finitely generated.  Thus $supp(P)$ is bounded.  Note that $max(P^{-k}_Y) = max(\Omega^kY)$, thus by Proposition \ref{loop lemma} the sequence $\{max(P^{-k}_Y(k)\}_k$ is strictly decreasing.  Then we may choose $k <<0$ such that $k \le N+1$ and $max(P^{k-1}_Y(-k+1)) < min(P)$.  Then the short exact sequence of dg-modules $0 \rightarrow \widehat{P^{\ge k}_Y} \hookrightarrow \widehat{P_Y} \twoheadrightarrow \widehat{P^{<k}_Y} \rightarrow 0$ yields an exact triangle $\widehat{P^{\ge k}_Y} \rightarrow \widehat{P_Y} \rightarrow \widehat{P^{<k}_Y} \rightarrow \widehat{P^{\ge k}_Y}(1)$ in $D_{dg}(A)$.  Since
\begin{equation*}
max(\widehat{P^{<k}}) = max(P^{k-1}_Y(-k+1)) < min(P)
\end{equation*}
we have that $\Hom_{Ho_{dg}(A)}(P, \widehat{P^{<k}_Y}) = 0$.  Since $P$ is strictly perfect, morphisms in the derived and homotopy categories coincide, and so $\Hom_{D_{dg}(A)}(P, \widehat{P^{<k}_Y}) = 0$.

We obtain a morphism of triangles in $D_{dg}(A)$:
\begin{eqnarray*}
\begin{tikzcd}
P \arrow[r, "id"] \arrow[d, "h", dotted] & P \arrow[r] \arrow[d, "\widehat{p}^{-1} \circ \alpha"] &  0 \arrow[r] \arrow[d] & P(1) \arrow[d, "h(1)", dotted]\\
\widehat{P^{\ge k}_Y} \arrow[r] & \widehat{P_Y} \arrow[r] &\widehat{P^{< k}_Y} \arrow[r] & \widehat{P^{\ge k}_Y}(1) 
\end{tikzcd}
\end{eqnarray*}
Postcomposing the left square with $\widehat{P_Y} \xrightarrow{\widehat{p}} X$, we obtain $\alpha = \widehat{p_k} \circ h$.  We obtain a morphism of triangles in $D^b_{dg}(A)$:
\begin{eqnarray*}
\begin{tikzcd}
P \arrow[r, "\alpha"] \arrow[d, "h"] & Y \arrow[r, "s"] \arrow[d, "id"] & M \arrow[r] \arrow[d, "g' ", dotted] & P(1) \arrow[d, "h(1)"]\\
\widehat{P^{\ge k}_Y} \arrow[r, "\widehat{p_k}"] & Y \arrow[r, "\widehat{i_k}"] & \widehat{C(p_k)} \arrow[r] & \widehat{P^{\le k}_Y}(1)
\end{tikzcd}
\end{eqnarray*}
Since $\widehat{P^{\ge{k}}_Y} \in D^{perf}_{dg}(A)$, the roof $\widehat{i_k}^{-1} \circ (g' \circ g): X \rightarrow Y$ defines a morphism in $A\dgstab$.  It follows immediately from the above diagram that the roofs $s^{-1}\circ g$ and $\widehat{i_k}^{-1} \circ (g' \circ g)$ are equivalent in $A\dgstab$.

It remains to show that $k$ can be replaced by $N+1$.  Since $k \le N+1$ by definition, we have an exact sequence of dg-modules $0 \rightarrow \widehat{C(p_{N+1})} \hookrightarrow \widehat{C(p_k)} \twoheadrightarrow \widehat{(P_Y^{\ge k})^{\le N}}(1) \rightarrow 0$ arising from the underlying exact sequence of complexes.  We also have that 
\begin{eqnarray*}
max(\widehat{(P_Y^{\ge k})^{\le N}}(1)) &=& max(P_Y^{N}(-N)(1))\\
 & = & max(\Omega^{-N} Y(-N+1))\\
 & < & min(X)
\end{eqnarray*}
The last inequality is true by definition of $N$.  Thus $\Hom_{Ho^b_{dg}(A)}(X, \widehat{(P_Y^{\ge k})^{\le N}}(1)) = 0$ and, since $\widehat{(P_Y^{\ge k})^{\le N}}(1)$ is strictly perfect, $\Hom_{D^b_{dg}(A)}(X, \widehat{(P_Y^{\ge k})^{\le N}}(1)) =  0$.  We obtain a morphism of triangles in $D^b_{dg}(A)$:
\begin{eqnarray*}
\begin{tikzcd}
X \arrow[r, "id"] \arrow[d, "f ", dotted] & X \arrow[r] \arrow[d, "g \circ g' "] & 0 \arrow[r] \arrow[d] & X(1) \arrow[d, "f(1)", dotted]\\
\widehat{C(p_{N+1})} \arrow[r, hook] & \widehat{C(p_k)} \arrow[r] & \widehat{(P_Y^{\ge k})^{\le N}}(1) \arrow[r] & \widehat{C(p_N)}(1)
\end{tikzcd}
\end{eqnarray*}
We also have that $\widehat{i_k}$ factors as $Y \xrightarrow{\widehat{i_{N+1}}} \widehat{C(p_{N+1})} \hookrightarrow \widehat{C(p_k)}$.  It follows that the roof $\widehat{i_{N+1}}^{-1} \circ f$ defines a morphism in $A\dgstab$ which is equal to $\widehat{i_k}^{-1} \circ (g' \circ g)$.  Since $\widehat{C(p_N)} = \widehat{\tau_{\le 0} R_{X,Y}}$ and $\widehat{i_{N+1}} = i$, we are done.
\end{proof}

Having found a convenient choice of roofs between $X$ and $Y$, we now investigate maps between $X$ and $\widehat{\tau_{\le 0}R_{X,Y}}$ in the derived category.  This investigation shall yield a method for computing morphisms between zero-differential modules.

\begin{lemma}
\label{computation lemma}
Let $X, Y\in A\grmod$.  Then we have an isomorphism
\begin{eqnarray*}
\xi: \Hom_{Ho^+_{dg}(A)}(X, \widehat{R_{X,Y}})& \xrightarrow{\sim}& \Hom_{A\dgstab}(X, Y)\\
f &\mapsto & 
\begin{tikzcd}
X \arrow[dr, swap, "\phi^{-1}\circ f"] & & Y \arrow[dl, "i"]\\
& \widehat{\tau_{\le 0}R_{X,Y}} &
\end{tikzcd}
\end{eqnarray*}
where $\phi$ is the natural inclusion of $\widehat{\tau_{\le 0}R_{X,Y}}$ into $\widehat{R_{X,Y}}$.
\end{lemma}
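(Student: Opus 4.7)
The plan is to decompose $\xi$ into a composition of well-understood maps and reduce the claim to showing that a single quotient map is bijective.

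First I verify $\xi$ is well-defined. The canonical truncation $\tau_{\le 0} R_{X,Y} \hookrightarrow R_{X,Y}$ is a quasi-isomorphism of complexes because $R_{X,Y}$ has cohomology concentrated at position $N \le 0$ (where it equals $\Omega^{-N}Y$). Applying $\widehat{\phantom{x}}$ preserves this quasi-isomorphism, so $\phi$ is invertible in $D_{dg}(A)$. The map $i$ arises from the cone construction of Lemma \ref{roof lemma} and fits in the triangle $\widehat{P^{\ge N+1}_Y} \to Y \xrightarrow{i} \widehat{\tau_{\le 0} R_{X,Y}} \to \widehat{P^{\ge N+1}_Y}(1)$ in $D^b_{dg}(A)$; since $\widehat{P^{\ge N+1}_Y}$ is strictly perfect, $\bar i$ is invertible in $A\dgstab$, so the roof $(\phi^{-1} f, i)$ represents an honest morphism in $A\dgstab$ and $\xi$ is additive.

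Next I factor $\xi$ through natural maps:
$$
\Hom_{Ho^+_{dg}(A)}(X, \widehat{R_{X,Y}})
\xrightarrow{(a)} \Hom_{D_{dg}(A)}(X, \widehat{R_{X,Y}})
\xrightarrow{(b)} \Hom_{D^b_{dg}(A)}(X, \widehat{\tau_{\le 0} R_{X,Y}})
\xrightarrow{(c)} \Hom_{A\dgstab}(X, \widehat{\tau_{\le 0} R_{X,Y}})
\xrightarrow{(d)} \Hom_{A\dgstab}(X, Y),
$$
where $(a)$ is the identification from Section \ref{Differential Graded Modules} (applicable because $R_{X,Y} \in Comp^+(A\grproj)$), $(b)$ is composition with $\phi^{-1}$, $(c)$ is the quotient functor, and $(d)$ is composition with $\bar i^{-1}$. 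The maps $(a)$, $(b)$, and $(d)$ are isomorphisms by construction, so the proof reduces to showing $(c)$ is an isomorphism.

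For surjectivity of $(c)$, any $\alpha \in \Hom_{A\dgstab}(X, Y)$ is represented by a roof $(g, i)$ with $g \in \Hom_{D^b_{dg}(A)}(X, \widehat{\tau_{\le 0} R_{X,Y}})$ by Lemma \ref{roof lemma}, and $g$ then maps under $(c)$ followed by $(d)$ to $\alpha$. For injectivity, suppose $g$ becomes zero in $A\dgstab$; then $g$ factors through a strictly perfect dg-module in $D^b_{dg}(A)$. The plan is to apply $\Hom(X, -)$ to the defining triangle in both $D_{dg}(A)$ and $A\dgstab$: in $A\dgstab$ the outer Hom terms vanish by perfectness, realizing the isomorphism $(d)$ via the triangle; using the support condition $\max(\Omega^{-N}Y(-N)) < \min(X)$ that defines $N$, the grading bounds of Proposition \ref{loop lemma}, and the fact that $X$ has zero differential, one shows that the corresponding Hom's in $D_{dg}(A)$ also vanish, and a five-lemma comparison then yields injectivity of $(c)$. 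This final vanishing is the main obstacle: it requires careful tracking of how the dg-gradings of $\widehat{P^{\ge N+1}_Y}$ and its shift interact with the support of $X$, and exploiting the paper's identification $\Hom_{Ho_{dg}} = \Hom_{D_{dg}}$ for maps into strictly perfect targets to reduce the question to an explicit graded-module computation.
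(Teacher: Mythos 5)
Your well-definedness and surjectivity steps track the paper's argument closely: the identification $\Hom_{Ho_{dg}(A)}(X,\widehat{R_{X,Y}})\cong\Hom_{D_{dg}(A)}(X,\widehat{R_{X,Y}})$ for $R_{X,Y}\in Comp^+(A\grproj)$, composition with $\phi^{-1}$, and Lemma \ref{roof lemma} for surjectivity. (One small omission: your map $(b)$ naturally lands in $\Hom_{D_{dg}(A)}(X,\widehat{\tau_{\le 0}R_{X,Y}})$, and getting a roof in $D^b_{dg}(A)$ is not free; the paper replaces $\widehat{R_{X,Y}}$ by a bounded truncation $\widehat{\tau_{\le M}R_{X,Y}}$ containing the image of $f$.)

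The genuine gap is injectivity. Your plan rests on the vanishing of $\Hom_{D^b_{dg}(A)}(X,\widehat{P_Y^{\ge N+1}})$ and its shift, plus a five-lemma comparison, and neither part survives scrutiny. The vanishing is false in general: since $d_X=0$, a dg-morphism out of $X$ lands in the kernel of the differential, and the kernel of $d^0$ on $P_Y^0=Y\otimes_kA$ is everything, so $\Hom_{A\grmod}(X,Y\otimes_kA)$ contributes; the support condition defining $N$ only controls the tail term $\Omega^{-N}Y(-N)$, not the terms of $P^{\ge N+1}_Y$ near homological degree $0$, whose supports overlap that of $X$. Indeed, if that group vanished, $i_*\colon\Hom_{D^b_{dg}(A)}(X,Y)\to\Hom_{D^b_{dg}(A)}(X,\widehat{\tau_{\le 0}R_{X,Y}})$ would be injective, yet any nonzero graded morphism $X\to Y$ factoring through a projective is nonzero in the source (the functor $A\grmod\to D^b_{dg}(A)$ is faithful) and dies in the target, which Lemmas \ref{morphism lemma 1}--\ref{morphism lemma 3} identify with $\bigoplus_n\Hom_{A\grstab}(X,\Omega^{-n}Y(-n))$. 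And even granting both vanishings, the five-lemma comparison would require the quotient map $\Hom_{D^b_{dg}(A)}(X,Y)\to\Hom_{A\dgstab}(X,Y)$ to be injective, which fails for the same reason. The paper's injectivity argument uses a different mechanism that you will need: if $\xi(f)=0$, then $\phi^{-1}\circ f$ is killed by some $s$ with strictly perfect cone, so $f$ factors in $Ho^b_{dg}(A)$ through the strictly perfect $C(s)(-1)$ (strict perfectness converts derived morphisms into homotopy-category morphisms); then, because $\widehat{\epsilon}\colon\Omega^{-N}Y(-N)\hookrightarrow\widehat{R_{X,Y}}$ is a quasi-isomorphism with strictly perfect source for the factoring map, $f$ further factors through $\Omega^{-N}Y(-N)$ in $Ho^b_{dg}(A)$, and $\Hom_{Ho^b_{dg}(A)}(X,\Omega^{-N}Y(-N))=0$ precisely by the defining property $max(\Omega^{-N}Y(-N))<min(X)$ of $N$.
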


\begin{proof}
Let $f \in \Hom_{Ho^+_{dg}(A)}(X, \widehat{R_{X,Y}})$.  In order for $\xi$ to be well-defined, we must show that $\phi^{-1} \circ f \in Mor(D_{dg}(A))$ can be represented by a roof in $D^b_{dg}(A)$.  By Proposition \ref{loop lemma}, the sequence $\{min(\widehat{\tau_{>M}R_{X,Y}})\}_M$ strictly increases with $M$.  Since $X$ is finitely generated, there exists $M>>0$ such that the image of $f$ lies in $\widehat{\tau_{\le M}(R_{X,Y})}$.  It is clear that the inclusion $\phi$ also factors through $\widehat{\tau_{\le M}(R_{X,Y})}$, and the inclusion of $\widehat{\tau_{\le M}(R_{X,Y})}$ into $\widehat{R_{X,Y}}$ is a quasi-isomorphism.  Thus $R_{X,Y}$ can be replaced by the bounded dg-module $\widehat{\tau_{\le M}(R_{X,Y})}$ in the roof $\phi^{-1} \circ f$, and so we may view $\phi^{-1} \circ f$ as a morphism in $Mor(D^b_{dg}(A))$.  Thus $\xi$ is well-defined.

We now prove surjectivity of $\xi$.  Since $R_{X,Y} \in Ho^+(A\grproj)$, we have that $\Hom_{Ho_{dg}(A)}(X, \widehat{R_{X,Y}}) \cong \Hom_{D_{dg}(A)}(X, \widehat{R_{X,Y}})$.  Post-composition with $\phi^{-1}$ yields an isomorphism:
\begin{eqnarray*}
\Hom_{Ho^+_{dg}(A)}(X, \widehat{R_{X,Y}})& \xrightarrow{\sim}& \Hom_{D^b_{dg}(A)}(X, \widehat{\tau_{\le 0}R_{X,Y}})\\
f& \mapsto& \phi^{-1} \circ f
\end{eqnarray*}
It follows immediately from Lemma \ref{roof lemma} that the map
\begin{eqnarray*}
\Hom_{D^b_{dg}(A)}(X, \widehat{\tau_{\le 0}R_{X,Y}})& \twoheadrightarrow& \Hom_{A\dgstab}(X, Y)\\
g &\mapsto & 
\begin{tikzcd}
X \arrow[dr, swap, "g"] & & Y \arrow[dl, "i"]\\
& \widehat{\tau_{\le 0}R_{X,Y}} &
\end{tikzcd}
\end{eqnarray*}
is surjective.  The composition of these two maps is precisely $\xi$, which is therefore surjective.

It remains to show injectivity.  Suppose that $\xi(f) = 0.$  Then there exists a morphism $s: \widehat{\tau_{\le 0} R_{X,Y}} \rightarrow M$ in $D^b_{dg}(A)$ such that $s \circ \phi^{-1} \circ f = 0$ and $C(s)$ is strictly perfect.  We obtain a morphism of triangles in $D^b_{dg}(A)$:

\begin{eqnarray*}
\begin{tikzcd}
X \arrow[r, "id"] \arrow[d, "\alpha", dotted] & X \arrow[r] \arrow[d, "\phi^{-1} \circ f"] & 0 \arrow[r] \arrow[d] & X(1)\arrow[d, "\alpha(1)", dotted]\\
C(s)(-1) \arrow[r, "\beta"] & \widehat{\tau_{\le 0} R_{X,Y}} \arrow[r, "s"] & M \arrow[r] & C(s)
\end{tikzcd}
\end{eqnarray*}
Since $C(s)(-1)$ is strictly perfect, we can choose to represent $\alpha$ by a morphism in $Ho^b_{dg}(A)$.  From the above diagram and the fact that $R^\bullet_{X,Y} \in Comp^+(A\grproj)$ it follows that $f = \phi \circ \beta \circ \alpha$ in $Ho^b_{dg}(A)$.

Note that the natural inclusion of complexes $\epsilon: \Omega^{-N} Y[-N] \hookrightarrow R_{X,Y}$ is a quasi-isomorphism, hence so is $\widehat{\epsilon}: \Omega^{-N} Y(-N) \hookrightarrow \widehat{R_{X,Y}}$.  Since $C(s)(-1)$ is strictly perfect, the roof $\widehat{\epsilon}^{-1} \circ \phi \circ \beta: C(s)(-1) \rightarrow \Omega^{-N}Y(-N)$ can be represented by a morphism $\gamma$ in $Ho^b_{dg}(A)$.  Repeating the argument of the previous paragraph, we have that $\phi \circ \beta = \widehat{\epsilon} \circ \gamma$ in $Ho^b_{dg}(A)$.

By the above two paragraphs, we have $f = \phi \circ \beta \circ \alpha = \widehat{\epsilon} \circ \gamma \circ \alpha$ in $Ho^b_{dg}(A)$.  But this means that $f$ factors through $\Omega^{-N}Y(-N)$ in the homotopy category.  By definition of $N$, $max(\Omega^{-N}Y(-N)) < min(X)$ and so $\Hom_{Ho^b_{dg}}(X, \Omega^{-N}Y(-N)) = 0$.  Thus $f = 0$ and $\xi$ is injective.
\end{proof}

In the next three lemmas, we relate morphisms in $\mathcal{C}(A)$ to those in $A\dgstab$ via the homotopy category.

\begin{lemma}
\label{morphism lemma 1}
Let $X\in A\grmod$.  Let $(P^\bullet, d^\bullet_P)\in Comp(A\grmod)$.  Suppose that $\Hom_{A\grmod}(X, ker(d^n_P)(-n)) = 0$ for almost all $n$.  Let $i_n: ker(d^n_P)(-n) \hookrightarrow \widehat{P}$ denote the inclusion (of dg-modules).  Then the map
\begin{eqnarray*}
\Phi: \bigoplus_{n\in \mathbb{Z}}\Hom_{A\grmod}(X, ker(d^n_P)(-n)) & \rightarrow & \Hom_{A\dgMod}(X, \widehat{P})\\
(f_n)_n &\mapsto & \sum_n i_n \circ f_n
\end{eqnarray*}
is an isomorphism of vector spaces.
\end{lemma}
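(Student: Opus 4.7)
The plan is to recognize the lemma as essentially an unpacking: a dg-morphism out of a zero-differential module is the same as a graded morphism into the cycles of the codomain, and the cycles of $\widehat{P}$ split as a direct sum indexed by cohomological degree. I would break the argument into two identifications and then check that their composition is $\Phi$.

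First I would use that $d_X = 0$: a morphism of dg-modules $g: X \to \widehat{P}$ is precisely a morphism of graded modules satisfying $d_{\widehat{P}} \circ g = 0$, i.e., one whose image lies in $ker(d_{\widehat{P}})$. Unwinding the definition of $\widehat{\phantom{x}}$ from Section \ref{Differential Graded Modules}, the differential of $\widehat{P}$ restricts on the summand $P^n(-n)$ to $d^n_P$, landing in $P^{n+1}(-(n+1))$. Hence as a graded submodule of $\widehat{P}$,
$$
ker(d_{\widehat{P}}) \;=\; \bigoplus_{n\in\mathbb{Z}} ker(d^n_P)(-n),
$$
with $i_n$ realizing the inclusion of the $n$-th summand. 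This yields a natural identification $\Hom_{A\dgMod}(X, \widehat{P}) \cong \Hom_{A\grMod}(X, \bigoplus_n ker(d^n_P)(-n))$.

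Next I would exploit that $X \in A\grmod$ is finitely generated. Picking homogeneous generators $x_1, \dots, x_r$, each $\phi(x_i)$ for a graded morphism $\phi: X \to \bigoplus_n ker(d^n_P)(-n)$ has finite support in the direct sum, so the entire image of $\phi$ is contained in a finite subsum. This upgrades the usual universal property to the isomorphism
$$
\Hom_{A\grMod}\!\left(X, \bigoplus_n ker(d^n_P)(-n)\right) \;\cong\; \bigoplus_{n\in\mathbb{Z}} \Hom_{A\grmod}\!\left(X, ker(d^n_P)(-n)\right),
$$
sending a morphism to the tuple of its components; its inverse assembles a finitely-supported tuple $(f_n)_n$ into $\sum_n i_n \circ f_n$. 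The vanishing hypothesis is what guarantees this assembly is a literally finite sum, so $\Phi$ is well-defined; composing the two identifications above then recovers $\Phi$ on the nose, so $\Phi$ is an isomorphism.

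The argument is largely formal, and I do not expect any real obstacle. The one point that requires attention is the grading bookkeeping: verifying that $ker(d_{\widehat{P}}) \cap P^n(-n) = ker(d^n_P)(-n)$ with the correct shift, and that the inclusions $i_n$ are indeed dg-module maps (using again that the ambient modules carry the zero differential on each shifted summand of cycles). Once the conventions of Section \ref{Differential Graded Modules} are in place this is immediate, and no further input is needed.
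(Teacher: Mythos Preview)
Your proposal is correct and follows essentially the same route as the paper: both arguments use $d_X=0$ to factor any dg-map through $ker(d_{\widehat{P}})=\bigoplus_n ker(d^n_P)(-n)$, and then split into components. The only cosmetic difference is that you invoke finite generation of $X$ to show the component tuple has finite support, whereas the paper simply defines $\Phi^{-1}(f)=(\pi_n\circ f)_n$ and lets the vanishing hypothesis force almost all $\pi_n\circ f$ to be zero; either justification works.
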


\begin{proof}
By hypothesis, the sum in the definition of $\Phi$ is finite, so $\Phi$ is a well-defined $k$-linear map.  It remains to construct $\Phi^{-1}$.  Given $f \in \Hom_{A\dgMod}(X, \widehat{P})$, we have $d_{\widehat{P}}\circ f = f \circ d_X =0$, since $X$ has zero differential.  Thus $im(f) \subset ker(d_{\widehat{P}}) = \bigoplus_n ker(d^n_P)(-n)$.  Let $\pi_n$ denote the projection onto the $n$th summand, and define $\Phi^{-1}(f) = (\pi_n\circ f)_n$; it is easy to verify that $\Phi^{-1}$ is inverse to $\Phi$.
\end{proof}

\begin{lemma}
\label{morphism lemma 2}
Let all notation and assumptions be as in Lemma \ref{morphism lemma 1}.  Assume in addition that $P^\bullet \in Comp(A\grproj)$ and that $P^\bullet$ is exact at each $n$ for which $\Hom_{A\grmod}(X, ker(d^n_P)(-n))$ is nonzero.  Then $\Phi$ induces an isomorphism:
\begin{eqnarray*}
\overline{\Phi}: \bigoplus_{n\in \mathbb{Z}}\Hom_{A\grstab}(X, ker(d^n_P)(-n)) & \rightarrow & \Hom_{Ho_{dg}(A)}(X, \widehat{P})\\
(f_n)_n &\mapsto & \sum_n i_n\circ f_n
\end{eqnarray*}
\end{lemma}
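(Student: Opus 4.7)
The plan is to deduce the lemma from Lemma \ref{morphism lemma 1} by identifying which sums $\Phi((f_n)) = \sum_n i_n \circ f_n$ become null-homotopic. Since $\Phi$ is a bijection onto $\Hom_{A\dgMod}(X, \widehat{P})$ and $\Hom_{Ho_{dg}(A)}(X, \widehat{P})$ is the quotient of the latter by null-homotopic morphisms, it suffices to show that $\Phi((f_n))$ is null-homotopic if and only if each $f_n$ factors through a finitely generated graded projective in $A\grmod$. This single equivalence yields simultaneously the well-definedness and injectivity of $\overline{\Phi}$, while surjectivity follows directly from that of $\Phi$.

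The core computation is component-wise. A degree $-1$ graded morphism $h: X \to \widehat{P}$ decomposes as $h = \sum_n h_n$ with $h_n: X \to P^n(-n)$ of internal degree $-1$. Using $d_X = 0$, the homotopy equation $d_{\widehat{P}} \circ h = \Phi((f_n))$ reduces, on the summand $P^n(-n) \subset \widehat{P}$, to the identity $d^{n-1}_P(-(n-1)) \circ h_{n-1} = i_n \circ f_n$. Via the identification $P^{n-1}(-(n-1)) = P^{n-1}(-n)(1)$, each $h_{n-1}$ corresponds to a degree-zero map $\tilde{h}_{n-1}: X \to P^{n-1}(-n)$, and the identity becomes
\[
d^{n-1}_P(-n) \circ \tilde{h}_{n-1} = f_n.
\]
Whenever $f_n \neq 0$, the hypothesis supplies exactness of $P^\bullet$ at $n$, so $d^{n-1}_P(-n)$ factors through a surjection $P^{n-1}(-n) \twoheadrightarrow ker(d^n_P)(-n)$.

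The "only if" direction is then immediate: the identity above exhibits $f_n$ as a composition through the projective $P^{n-1}(-n)$, so $[f_n] = 0$ in $A\grstab$. For the "if" direction, given a factorization $f_n = g_n \circ e_n$ through some finitely generated projective $Q_n$ in $A\grmod$, projectivity of $Q_n$ together with the surjection above yields a lift $\tilde{g}_n: Q_n \to P^{n-1}(-n)$ with $d^{n-1}_P(-n) \circ \tilde{g}_n = g_n$. Setting $\tilde{h}_{n-1} := \tilde{g}_n \circ e_n$ for these $n$ (and zero otherwise) and reinterpreting as the corresponding degree $-1$ map $h_{n-1}$, the sum $h := \sum_n h_{n-1}$ is finite by the hypothesis of Lemma \ref{morphism lemma 1} and is a valid null-homotopy of $\Phi((f_n))$ by construction. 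The main bookkeeping obstacle is tracking the two grading shifts, in particular the reinterpretation of a degree $-1$ map into $P^{n-1}(-(n-1))$ as a degree $0$ map into $P^{n-1}(-n)$; this is precisely the mechanism that transforms the component-wise homotopy equation into a factorization through a projective module.
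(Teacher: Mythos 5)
Your proof is correct and follows essentially the same route as the paper: reduce to showing $\Phi((f_n))$ is null-homotopic iff every $f_n$ factors through a projective, read off the homotopy equation component-wise using $d_X=0$, and in the converse direction lift the factorization through $Q_n$ along the surjection $P^{n-1}(-n)\twoheadrightarrow \ker(d^n_P)(-n)$ supplied by exactness. The only (harmless) difference is that in the forward direction you factor $f_n$ through the single finitely generated projective $P^{n-1}(-n)$, whereas the paper factors it through all of $\widehat{P}(-1)$; your component-wise version is if anything slightly cleaner.
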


\begin{proof}
Take $(f_n)_n \in \bigoplus_{n\in \mathbb{Z}}\Hom_{A\grmod}(X, ker(d^n_P)(-n))$.  By Lemma \ref{morphism lemma 1}, it suffices to show that $\Phi(f_n)$ is nullhomotopic if and only if $f_n$ factors through a projective module for all $n$.  We also note that $d_{\widehat{P}}$ is $A$-linear, since $d_A = 0$.

Suppose that $\Phi(f_n)$ is nullhomotopic and fix $k \in \mathbb{Z}$.  Let $h: X \rightarrow \widehat{P}(-1)$ be a homotopy.  Since $d_X = 0$, we have that $\Phi(f_n) = d_{\widehat{P}}(-1) \circ h$ (as morphisms of graded modules).  As a graded module, $\widehat{P} = \bigoplus_n P^n(-n)$; let $\pi_n$ be the projection onto the $n$th summand.  From the proof of Lemma \ref{morphism lemma 1}, we have that $f_k = \pi_k \circ \Phi(f_n)$, and so $f_k = \pi_k \circ d_{\widehat{P}}(-1)\circ h$.  Thus $f_k$ factors through the graded projective module $\widehat{P}(-1)$.

Now suppose that for each $n$, $f_n$ factors as $X \xrightarrow{a_n} Q_n \xrightarrow{b_n} ker(d^n_P)(-n)$ for some $Q_n \in A\grproj$.  We shall define a nullhomotopy of $\Phi(f_n)$ by constructing maps $h_n: X \rightarrow P^{n-1}(-n)$.  If $f_n = 0$, let $h_n = 0$.  If $f_n$ is nonzero, then $P^\bullet$ is exact at $n$, and so $P^{n-1}(-n)$ surjects onto $ker(d^n_P)(-n)$ via the differential.  Since $Q_n$ is projective, $b_n$ lifts to $c_n: Q_n \rightarrow P^{n-1}(-n)$.  Define $h_n = c_n \circ a_n$, as summarized by the diagram below.
\begin{eqnarray*}
\begin{tikzcd}
X \arrow[r, "a_n"] \arrow[d, "h_n", dotted] & Q_n \arrow[d, "b_n"] \arrow[dl, swap, "c_n", dotted]\\
P^{n-1}(-n) \arrow[r, swap, "d^{n-1}_P(-n)", two heads] & ker(d^n_P)(-n)
\end{tikzcd}
\end{eqnarray*}

Viewing $P^{n-1}(-n)$ as a graded submodule of $\widehat{P}(-1)$, define $h := \sum_n h_n : X \rightarrow \widehat{P}(-1)$.  Since all but finitely many of the $h_n$ are zero, $h$ is a well-defined morphism of graded modules.  It is easy to check that $\Phi(f_n) = d_{\widehat{P}}(-1) \circ h$, hence $\Phi(f_n)$ is nullhomotopic.
\end{proof}

\begin{lemma}
\label{morphism lemma 3}
Let $X, Y\in A\grmod$.  Then there is an isomorphism
\begin{eqnarray*}
\chi:  \Hom_{\mathcal{C}(A)}(X, Y) & \xrightarrow{\sim} & \Hom_{Ho_{dg}(A)}(X, \widehat{R_{X, Y}})\\
(f_n) & \mapsto & \sum_{n}i_n \circ f_{-n}
\end{eqnarray*}
where $i_n : \Omega^{-n}Y(-n) = ker(d^n_{R_{X,Y}})(-n) \hookrightarrow \widehat{R_{X,Y}}$ is the natural inclusion of dg-modules for $n \ge N$ and the zero map for $n < N$.
\end{lemma}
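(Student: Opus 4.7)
The plan is to reduce to Lemma \ref{morphism lemma 2} applied to the complex $P^\bullet = R^\bullet_{X,Y}$. First I would reconcile the indexing conventions. The defining sum for $\Hom_{\mathcal{C}(A)}(X,Y)$ runs over $\Hom_{A\grstab}(X, \Omega^n Y(n))$ for $n \in \mathbb{Z}$, while the source of $\overline{\Phi}$ from Lemma \ref{morphism lemma 2} runs over $\Hom_{A\grstab}(X, \ker(d^n_P)(-n))$. Substituting $n \mapsto -n$ rewrites
\[
\Hom_{\mathcal{C}(A)}(X, Y) = \bigoplus_{n \in \mathbb{Z}} \Hom_{A\grstab}(X, \Omega^{-n}Y(-n)).
\]
Unwinding the standard biresolution of Section \ref{resolutions} shows $\ker(d^n_{R_{X,Y}})(-n) = \Omega^{-n}Y(-n)$ for every $n \ge N$. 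For $n < N$ the term $R^n_{X,Y}$ is zero, but the corresponding Hom group on the $\mathcal{C}(A)$ side also vanishes: setting $F_n = \Omega^{-n}Y(-n)$, the identity $F_{n-1} \cong \Omega(F_n)(1)$ together with Proposition \ref{loop lemma} part 3 gives $\max(F_n) \le \max(F_N) - (N-n) < \min(X)$ for all $n \le N$. Thus the two indexing schemes produce the same direct sum.

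Second, I would verify the hypotheses of Lemma \ref{morphism lemma 2} for $R^\bullet_{X,Y}$. The complex lies in $Comp(A\grproj)$: in non-negative degrees its terms are injective, hence projective by self-injectivity, while in negative degrees they are projective by construction. The finiteness hypothesis combines the bound above with its dual, Proposition \ref{loop lemma} part 4, which gives $\min(\Omega^{-n}Y(-n)) \ge \min(Y) + n$ for $n > 0$, forcing $\Hom_{A\grmod}(X, \Omega^{-n}Y(-n)) = 0$ once $n \gg 0$. The exactness hypothesis is subtle only at $n = N$, where the sharp truncation creates cohomology $\Omega^{-N}Y$; at every other degree exactness is inherited from the biresolution $B^\bullet_Y$, and at $n = N$ the accompanying Hom group vanishes by the very definition of $N$.

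With both hypotheses in hand, Lemma \ref{morphism lemma 2} yields an isomorphism
\[
\bigoplus_{n \in \mathbb{Z}} \Hom_{A\grstab}(X, \ker(d^n_R)(-n)) \xrightarrow{\sim} \Hom_{Ho_{dg}(A)}(X, \widehat{R_{X,Y}}),
\]
given by $(g_n) \mapsto \sum_n i_n \circ g_n$. Undoing the substitution $n \mapsto -n$ and writing $g_n = f_{-n}$ recovers precisely the formula $(f_n) \mapsto \sum_n i_n \circ f_{-n}$ of the statement; the convention that $i_n$ is the zero map for $n < N$ is forced by $\ker(d^n_R) = 0$ there. The main obstacle is the bookkeeping at the truncation point $n = N$: one must confirm that the solitary failure of exactness introduced by the sharp truncation is invisible to maps from $X$, which is exactly what the support bound $\max(\Omega^{-N}Y(-N)) < \min(X)$ ensures.
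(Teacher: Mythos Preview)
Your proposal is correct and follows essentially the same approach as the paper's proof: verify the hypotheses of Lemma~\ref{morphism lemma 2} for $P^\bullet = R^\bullet_{X,Y}$ (finiteness of nonzero Hom groups via Proposition~\ref{loop lemma}, projectivity of the terms, and exactness away from $N$ together with the vanishing of $\Hom_{A\grmod}(X,\Omega^{-N}Y(-N))$ by the definition of $N$), then compose the resulting isomorphism $\overline{\Phi}$ with the reindexing $(f_n)\mapsto(f_{-n})$. Your write-up is somewhat more explicit than the paper's---in particular, you spell out why the summands for $n<N$ vanish on the $\mathcal{C}(A)$ side---but the argument is the same.
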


\begin{proof}
We may assume that $X$ and $Y$ are nonzero.  We first show that the hypotheses of Lemmas \ref{morphism lemma 1} and \ref{morphism lemma 2} are satisfied by $X$ and $R_{X,Y}$.  By Proposition \ref{loop lemma} we have that $\Hom_{A\grmod}(X, ker(d^n_{R_{X,Y}})(-n)) = 0$ for all but finitely many $n$.  By construction, $R_{X,Y}\in Comp(A\grproj)$ is exact at all $n \neq N$.  By the definition of $N$, $\Hom_{A{\dgmod}}(X, \Omega^{-N}Y(-N)) = 0$.  Thus the hypotheses of Lemmas \ref{morphism lemma 1} and \ref{morphism lemma 2} are satisfied.

$\chi$ is precisely the composition of the isomorphism
\begin{eqnarray*}
\Hom_{\mathcal{C}(A)}(X, Y) & \xrightarrow{\sim} & \prod_{n\in \mathbb{Z}}\Hom_{A\grstab}(X, \Omega^{-n}Y(-n))\\
(f_n) & \mapsto & (f_{-n})
\end{eqnarray*}
with $\overline{\Phi}$ of Lemma \ref{morphism lemma 2}.  Thus $\chi$ is an isomorphism.
\end{proof}

We are now ready to prove Theorem \ref{dgstable}.

\begin{lemma}
\label{fully faithful}
The functor $F_A$ is fully faithful.
\end{lemma}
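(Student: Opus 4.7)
The plan is to identify $F_A$ with the composition of the two bijections from Lemmas \ref{morphism lemma 3} and \ref{computation lemma},
\begin{equation*}
\Hom_{\mathcal{C}(A)}(X,Y) \xrightarrow{\chi} \Hom_{Ho_{dg}(A)}(X, \widehat{R_{X,Y}}) \xrightarrow{\xi} \Hom_{A\dgstab}(X, Y).
\end{equation*}
Since $\widehat{R_{X,Y}}$ has support bounded below and $X$ is finitely generated, every morphism in the middle space lies in $\Hom_{Ho^+_{dg}(A)}(X, \widehat{R_{X,Y}})$, so the composition $\xi \circ \chi$ makes sense. As $\mathcal{C}(A)$ and $A\dgstab$ have the same objects and $F_A$ is the identity on objects, fullness and faithfulness reduce to checking, for each $X, Y \in A\grmod$ and each $(f_n) \in \Hom_{\mathcal{C}(A)}(X,Y)$, that $F_A((f_n)) = \xi(\chi((f_n)))$.

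By $k$-linearity of $F_A$, $\chi$, and $\xi$, I may reduce to the case $f = (f_n)$ with $f_n = 0$ for $n \ne m$, so that Definition \ref{inclusion functor} gives $F_A(f) = \psi_{m,Y} \circ f_m$. On the other hand, $\chi(f) = i_{-m} \circ f_m$, where $i_{-m}: \Omega^m Y(m) \hookrightarrow \widehat{R_{X,Y}}$ is the inclusion into the kernel of the differential in cohomological degree $-m$, and $\xi(\chi(f))$ is the roof with left leg $\phi^{-1} \circ i_{-m} \circ f_m$ and right leg $i: Y \to \widehat{\tau_{\le 0}R_{X,Y}}$. Both expressions factor through $f_m$, so the identification reduces to showing that the roof
\begin{equation*}
\Omega^m Y(m) \xrightarrow{\phi^{-1} \circ i_{-m}} \widehat{\tau_{\le 0}R_{X,Y}} \xleftarrow{i} Y
\end{equation*}
represents $\psi_{m,Y}$ in $A\dgstab$.

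The case $m = 0$ is immediate since $i_0 = i$. For $m = 1$, the relevant piece of the bridge complex, namely $P^{-1}_Y = \Omega Y \otimes_k A$ sitting in cohomological degree $-1$ and $I^0_Y$ in degree $0$, together with the inclusions $\Omega Y(1) \hookrightarrow P^{-1}_Y(1)$ and $Y \hookrightarrow I^0_Y$, realizes exactly the distinguished triangle $\Omega Y \to Y \otimes_k A \to Y$ from Section \ref{resolutions} whose connecting morphism defines $\psi_1$; the roof therefore represents $\psi_1$. The cases $|m| \ge 2$ follow by induction on $|m|$, using the recursive definition of $\psi_n$ from Lemma \ref{coherence lemma} and the observation that successive graded pieces of the projective part (for $m > 0$) and the injective part (for $m < 0$) of the bridge complex telescope the iterated applications of the defining triangle for $\Omega$ into the single roof through $\widehat{\tau_{\le 0}R_{X,Y}}$. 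The main obstacle is the bookkeeping in this last step: both $\psi_m$ and the comparison roof are defined only up to compatible choices of quasi-isomorphisms and representatives, so one must align these choices carefully to make the two descriptions manifestly agree. Once this is done, $F_A = \xi \circ \chi$ on each morphism space, so $F_A$ is fully faithful.
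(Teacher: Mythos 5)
Your proposal is correct and follows essentially the same route as the paper: both identify $F_A$ on morphism spaces with the composite $\xi \circ \chi$ of the isomorphisms from Lemmas \ref{computation lemma} and \ref{morphism lemma 3}, reduce to morphisms concentrated in a single degree, and then verify that the roof $\Omega^m Y(m) \to \widehat{\tau_{\le 0}R_{X,Y}} \leftarrow Y$ represents $\psi_{m,Y}$. The ``telescoping'' induction you sketch for $|m|\ge 2$ is exactly what the paper carries out via its explicit commutative diagrams of inclusions among the truncated biresolutions $\tau_{\le 0}B_Y^{\ge n}$ and $\tau_{\le n}B_Y^{\ge 0}$, each map being a quasi-isomorphism or having perfect cone.
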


\begin{proof}
Let $X, Y \in A\grmod$.  From Lemmas \ref{computation lemma} and \ref{morphism lemma 3}, we obtain an isomorphism $\xi \circ \chi: \Hom_{\mathcal{C}(A)}(X, Y) \xrightarrow{\sim} \Hom_{A\dgstab}(X, Y)$.  It remains to show that this isomorphism is induced by $F_A$.

We have that $\xi \circ \chi(f_n) = \sum_{n} i^{-1}\circ \phi^{-1} \circ i_n \circ f_{-n}$.  Since $f_{-n} = 0$ for $n \le N$, it suffices to show that $i^{-1}\circ \phi^{-1} \circ i_n = \psi_{-n, Y}$ for $n > N$, where the $\psi_{-n,Y}$ are defined in Lemma \ref{coherence lemma}.

It follows easily from the definitions that $\psi_{-n, Y}$ can be represented by roofs of the form
\begin{eqnarray*}
\begin{tikzcd}
\Omega^{-n}Y(-n) \arrow[dr]& & Y \arrow[dl]\\
& \widehat{\tau_{\le 0}B_Y^{\ge n}} & 
\end{tikzcd}
& \text{ for } N < n \le 0\\
\begin{tikzcd}
\Omega^{-n}Y(-n) \arrow[dr]& & Y \arrow[dl]\\
& \widehat{\tau_{\le n}B^{\ge 0}_Y} & 
\end{tikzcd}
& \text{ for } n \ge 0
\end{eqnarray*}
where all morphisms are inclusions of dg-modules and have either acyclic or perfect cones.  We then obtain commutative diagrams of inclusions:
\begin{eqnarray*}
\begin{tikzcd}
\Omega^{-n}(Y)(-n)\arrow[dr] \arrow[dddr, "i_n", swap, bend right] & & Y\arrow[dl] \arrow[ddl, "i", bend left]\\
& \tau_{\le 0}B^{\ge n}_Y \arrow[d]&\\
& \tau_{\le 0} B^{\ge N}_Y \arrow[d, "\phi"] &\\
& B^{\ge N}_Y
\end{tikzcd}
& \text{ for } N < n \le 0
\end{eqnarray*}
\begin{eqnarray*}
\begin{tikzcd}
\Omega^{-n}Y(-n) \arrow[dr] \arrow [dddr, "i_n", swap, bend right] & & Y \arrow[dl] \arrow[d, "i"]\\
& \tau_{\le n}B^{\ge 0}_Y \arrow[d] & \tau_{\le 0}B^{\ge N}_Y \arrow[dl] \arrow[ddl, "\phi", bend left]\\
& \tau_{\le n} B^{\ge N}_Y \arrow[d] &\\
& B^{\ge N} & &
\end{tikzcd}
& \text{ for } n \ge 0
\end{eqnarray*}
Every map in the above diagrams is either a quasi-isomorphism or has perfect cone.  (This is immediate for all maps except $i_n$.  It then follows that $i_n$ is an isomorphism in $A\dgstab$, hence has perfect cone.)  Thus the above diagrams show that the roof defining $\psi_{n, Y}$ is equivalent to $i^{-1} \circ \phi^{-1} \circ i_n$ for all $n > N$.  
\end{proof}

\begin{lemma}
\label{generate}
The image of $F_A$ generates $A\dgstab$ as a triangulated category.
\end{lemma}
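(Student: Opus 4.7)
The plan is to show that any bounded dg-module fits into a short natural triangle whose outer terms are graded modules with zero differential. Since the quotient $D^b_{dg}(A) \twoheadrightarrow A\dgstab$ is essentially surjective and every object of $D^b_{dg}(A)$ is isomorphic to some $X \in A\dgmod$ with bounded support, it suffices to produce such a triangle for an arbitrary such $X$, with both ends lying in the essential image of $F_A$ on objects (equivalently, in the essential image of the composite $A\grmod \hookrightarrow A\dgmod \twoheadrightarrow A\dgstab$, since $F_A$ is the identity on objects).

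The key observation is that, because $d_A = 0$, the Leibniz rule collapses to $d_X(xa) = d_X(x)a$, so $d_X$ is $A$-linear. Consequently $K := \ker(d_X)$ is a graded $A$-submodule of $X$, and it is automatically a dg-submodule with zero induced differential. Moreover, $d_X^2 = 0$ forces $d_X(X) \subseteq K$, so the quotient $X/K$ also inherits zero differential. Since $X$ has bounded support and $A$ is finite-dimensional, $K$ and $X/K$ are finite-dimensional graded $A$-modules, hence objects in the essential image of $F_A$.

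Forming the inclusion $\iota: K \hookrightarrow X$ of dg-modules yields an exact triangle $K \xrightarrow{\iota} X \to C(\iota) \to K(1)$ in $D^b_{dg}(A)$. A short calculation identifies $C(\iota)$ with $X/K$ in the derived category: the evident projection $C(\iota) = K(1) \oplus X \twoheadrightarrow X/K$ is a morphism of dg-modules whose kernel is $K(1) \oplus K$ with differential $(y,x) \mapsto (0,y)$, i.e., the cone of $\mathrm{id}_K$, which is contractible. Hence $C(\iota) \xrightarrow{\sim} X/K$ in $D_{dg}(A)$, producing the triangle $K \to X \to X/K \to K(1)$ in $A\dgstab$ with both outer terms in the image of $F_A$, so $X$ lies in the triangulated subcategory generated by that image. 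No real obstacle arises; the only point requiring any care is the identification $C(\iota) \simeq X/K$, which is formal once one recognises the cone of $\mathrm{id}_K$.
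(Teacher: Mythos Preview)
Your proof is correct and uses the same core idea as the paper: the triangle $\ker(d_X) \to X \to X/\ker(d_X)$ in $A\dgstab$, with $\ker(d_X)$ a zero-differential graded module lying in the image of $F_A$. The one genuine difference is how the quotient is handled. You observe directly that $d_X^2 = 0$ forces $\mathrm{im}(d_X) \subseteq \ker(d_X)$, so $X/\ker(d_X)$ already has zero differential and lies in the image of $F_A$; no further work is needed. The paper does not make this observation and instead argues by induction on $\dim(M)$, using only that $\dim(M/\ker(d_M)) < \dim(M)$ when $\ker(d_M)$ is nonzero. Your version is slightly cleaner, since it dispenses with the induction entirely. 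The explicit identification $C(\iota) \simeq X/K$ via the contractibility of $C(\mathrm{id}_K)$ is also fine, though one could simply invoke the standard fact that a short exact sequence of dg-modules induces a triangle in the derived category, as the paper does.
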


\begin{proof}
Let $\mathcal{T}$ denote the full, replete, triangulated subcategory of $A\dgstab$ generated by the image of $F_A$.  Let $M\in A\dgstab$.  Since $ker(d_M)$ is a zero-differential dg-submodule of $M$, it lies in $\mathcal{T}$.  Thus if $ker(d_M) = M$, we are done.  Likewise, if $ker(d_M) = 0$, then $M$ is acyclic and thus isomorphic to zero in $A\dgstab$.  Thus we may assume that $ker(d_M)$ is a proper nontrivial dg-submodule of $M$.

The short exact sequence $0 \rightarrow ker(d_M) \hookrightarrow M \twoheadrightarrow M/ker(d_M) \rightarrow 0$ induces an exact triangle in $A\dgstab$.  $M$ is finitely generated, hence finite-dimensional, thus $dim(M/ker(d_M)) < dim(M)$.  By induction on $dim(M)$, we may assume that $M/ker(d_M)) \in \mathcal{T}$.  Since $ker(d_M)$, $M/ker(d_M) \in \mathcal{T}$ it follows that $M \in \mathcal{T}$.  Thus $\mathcal{T} = A\dgstab$.
\end{proof}


\subsection{The Dg-stable Category as a Triangulated Hull}

Theorem \ref{dgstable} suggests that $A\dgstab$ the triangulated hull of $\mathcal{C}(A)$.  To establish this, we must express our constructions in the language of dg-categories.

Given a category $\mathcal{C}$ with an automorphism $\Phi$, we can define a category $\mathcal{C}_{gr}$, enriched in $k\grmod$, with the same objects as $\mathcal{C}$ and morphisms $\Hom_{\mathcal{C}_{gr}}^n(X,Y) :=\Hom_{\mathcal{C}}(X, \Phi^n(Y))$.  The grading shift functor $(1)$ is an automorphism of the categories $A\grmod$, $A\grstab$, and $\mathcal{C}(A)$, allowing us to define the corresponding graded categories, which we view as dg-categories.  The natural functors between these categories are all compatible with grading shifts and hence lift to functors of dg-categories.  Likewise, the functor $\Omega$ commutes with grading shifts and thus defines a dg-endofunctor of each of these categories.

We can view the differential graded algebra $A$ as a dg-category with one object, and we denote the associated dg-category of (bounded) dg-modules by $A\dgmod_{dg}$.  We have that $Z^0(A\dgmod_{dg}) \cong A\dgmod$ and $H^0(A\dgmod_{dg}) \cong Ho^b_{dg}(A)$.  There is a natural dg-functor $A\grmod_{gr} \rightarrow A\dgstab_{dg}$.

Define the dg-derived category $\mathcal{D}^b_{dg}(A)$ to be the dg-quotient of $A\dgmod_{dg}$ by the full dg-subcategory of acyclic dg-modules.  Similarly, let $A\dgstab_{dg}$ denote the dg-quotient of $A\dgmod_{dg}$ by the full dg-subcategory of objects which are quasi-isomorphic to a perfect dg-module.  We have that $H^0(\mathcal{D}^b_{dg}(A)) \cong D^b_{dg}(A)$, $H^0(A\dgstab_{dg}) \cong A\dgstab$, and there are natural dg-functors $A\dgmod_{dg} \rightarrow \mathcal{D}^b_{dg}(A) \rightarrow A\dgstab_{dg}$.  

Since the functor $\Omega$ is given by tensoring with the bimodule $I$ defined in Section \ref{resolutions}, it induces a dg-functor $A\dgmod_{dg} \rightarrow A\dgmod_{dg}$.  Since this functor preserves acyclic and perfect dg-modules, by the universal property of the dg-quotient, it descends to a dg-endofunctor of $\mathcal{D}^b_{dg}(A)$ and $A\dgstab_{dg}$.

Since the natural dg-functor $A\grmod_{gr} \rightarrow A\dgstab_{dg}$ sends projective modules to zero, we obtain an induced dg-functor $A\grstab_{gr} \rightarrow A\dgstab_{dg}$ which is the identity on objects.  We would like this functor to descend to $\mathcal{C}(A)_{gr}$.

\begin{proposition}
$\mathcal{C}(A)_{gr}$ is the dg-orbit category of $A\grstab_{gr}$ by the functor $\Omega(1)$.
\end{proposition}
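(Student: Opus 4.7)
The plan is to verify that $\mathcal{C}(A)_{gr}$ satisfies the universal property characterizing the dg-orbit category from Section 2.7. The essential observation is that since $\Omega(1)$ has been arranged to be an \emph{automorphism} of $A\grstab_{gr}$ (via Section \ref{autoequivalence}), Keller's otherwise intricate construction of the dg-orbit category simplifies: morphism spaces become the naive orbit-category formula $\bigoplus_{n \in \mathbb{Z}} \Hom(X, (\Omega(1))^n Y)$, with composition twisted by the $\Omega(1)$-action on morphisms---precisely the formula from Definition \ref{nice category} promoted to a graded category.

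First I would construct the natural projection dg-functor $P: A\grstab_{gr} \to \mathcal{C}(A)_{gr}$: it is the identity on objects and sends $f \in \Hom^n_{A\grstab_{gr}}(X,Y) = \Hom_{A\grstab}(X, Y(n))$ to the tuple $(f_k)_k \in \bigoplus_k \Hom_{A\grstab}(X, \Omega^k Y(k+n))$ supported in the $k=0$ summand. Compatibility with composition, grading, and the zero differential follows directly from the composition formula in Definition \ref{nice category}. Next I would exhibit a natural isomorphism $\eta: P \circ \Omega(1) \to P$: for each $Y$, the component $\eta_Y: \Omega Y(1) \to Y$ is the morphism in $\mathcal{C}(A)_{gr}$ whose only nonzero entry is $\mathrm{id}_{\Omega Y(1)}$ sitting in the $k = 1$ summand $\Hom_{A\grstab}(\Omega Y(1), \Omega^1 Y(1))$. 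Its two-sided inverse is supported in the $k=-1$ summand of $\Hom_{\mathcal{C}(A)}(Y, \Omega Y(1))$, and both composites reduce to identities by the composition rule.

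To verify the universal property, suppose $\mathcal{B}$ is a dg-category and $G: A\grstab_{gr} \to \mathcal{B}$ is a morphism in $Ho(dgcat_k)$ equipped with an isomorphism $\alpha: G \circ \Omega(1) \cong G$. I would define the extension $\tilde{G}: \mathcal{C}(A)_{gr} \to \mathcal{B}$ by $\tilde{G} = G$ on objects and, on a degree-$n$ morphism $(f_k)_k$, by the finite sum $\tilde{G}((f_k)_k) = \sum_k \alpha^{(k)}_Y \circ G(f_k)$, where $\alpha^{(k)}_Y: G(\Omega^k Y(k+n)) \to G(Y(n))$ is the iterated isomorphism built from $\alpha$. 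The coherence relations $\alpha^{(0)}_Y = \mathrm{id}$ and $\alpha^{(k+m)}_Y = \alpha^{(k)}_Y \circ \alpha^{(m)}_{\Omega^k Y(k)}$, modeled on Lemma \ref{coherence lemma}, are exactly what is needed for $\tilde{G}$ to respect composition. Uniqueness is clear, because any factorization must restrict to $G$ on the image of $P$ (the $k=0$ summands) and must send $\eta$ to $\alpha$, and these requirements determine $\tilde{G}$ on every summand.

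The main obstacle will be verifying that $\tilde{G}$ respects the twisted composition law of Definition \ref{nice category}. This calculation closely mirrors the one carried out for $F_A$ in the proof that $F_A$ is a functor: one expands the definition of composition in $\mathcal{C}(A)_{gr}$, uses the naturality of $\alpha$ to commute iterated $\alpha^{(k)}$'s past the $G(f_k)$, and then collapses the result via the coherence relation above. A secondary technicality is that $rep(-,-)$ is defined via bimodules rather than strict dg-functors, so the universal property should formally be checked at the bimodule level; in our situation this reduces to routine bookkeeping, since both $P$ and the extension $\tilde{G}$ are induced by honest dg-functors.
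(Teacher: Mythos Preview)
Your approach is correct in outline but takes a genuinely different route from the paper. The paper does not verify the universal property directly; instead it invokes Keller's intrinsic characterization of the dg-orbit category (\cite{Keller2005}, Section 9.3, part d) of the Theorem), which says that a dg-functor $P:\mathcal{A}\to\mathcal{A}/F$ exhibits its target as the orbit category precisely when $P$ is essentially surjective and the canonical map
\[
\bigoplus_{c\in\mathbb{Z}}\operatorname*{colim}_{r\gg 0}\Hom_{\mathcal{A}}(F^rX,F^{r+c}Y)\longrightarrow \Hom_{\mathcal{A}/F}(PX,PY)
\]
is an isomorphism. Since $\Omega(1)$ has been made an automorphism, each colimit stabilizes immediately and the left-hand side collapses to $\bigoplus_c\Hom_{A\grstab_{gr}}(X,\Omega^cY(c))$, which is exactly $\Hom_{\mathcal{C}(A)_{gr}}(X,Y)$ by Definition \ref{nice category}. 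That is the entire proof.

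Your direct verification of the universal property is more self-contained and makes explicit why the automorphism hypothesis simplifies matters, but it is longer and carries a real risk you somewhat underplay: the universal property is stated for $G\in rep(\mathcal{A},\mathcal{B})$, i.e.\ for bimodules in the derived sense, not for strict dg-functors. Your construction of $\tilde{G}$ presupposes that $G$ is an honest dg-functor and that $\alpha$ is a strict natural isomorphism; passing from this to the general case is not quite ``routine bookkeeping''---one must either replace $G$ by a quasi-functor and argue that the extension is well-defined up to quasi-isomorphism of bimodules, or else verify the full 2-universal property in $Ho(dgcat_k)$. The paper's route sidesteps this entirely by appealing to Keller's characterization, which is phrased purely in terms of Hom complexes and essential surjectivity.
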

\begin{proof}
By construction, the projection map $A\grstab \rightarrow \mathcal{C}(A)$ is essentially surjective, and the natural map
\begin{equation*}
\bigoplus_{c\in \mathbb{Z}} colim_{r>>0} \Hom_{A\grstab_{gr}}(\Omega^rX(r), \Omega^{r+c}Y(r+c)) \rightarrow \Hom_{\mathcal{C}(A)_{gr}}(X, Y)
\end{equation*}
is an isomorphism of dg $k$-modules.  The result then follows by Keller's characterization of the orbit category (\cite{Keller2005}, Section 9.3, part d) of the Theorem).
\end{proof}

Write $\mathcal{B} = A\grstab_{gr}$ and $\mathcal{C} = A\dgstab_{dg}$.  Let $F_{dg}: \mathcal{B} \rightarrow \mathcal{C}$ denote the natural dg-functor.  Since $F_{dg}$ is the identity on objects, we can identify it with the dg $\mathcal{B}^{op} \otimes \mathcal{C}$-module $(X, Y) \mapsto \Hom_{\mathcal{C}}(Y, X \otimes_A A)$.  Similarly, we can identify $F_{dg}\circ \Omega(1)$ with the dg bimodule given by $(X,Y) \mapsto \Hom_{\mathcal{C}}(Y, X \otimes_A I(1))$.  By construction of $I$, we have a closed morphism $\phi:  A \rightarrow I(1)$ in $\mathcal{D}^b_{dg}(A^{op}\otimes_k A)$ which has perfect cone in $D^b_{dg}(A^{op}\otimes_k A)$.  Thus, $\phi$ descends to a closed morphism in $\mathcal{C}$ which becomes an isomorphism in $H^0(\mathcal{C}) = A\dgstab$.  Thus $\phi$ induces a quasi-isomorphism of dg-bimodules from $F_{dg}$ to $F_{dg} \circ \Omega(1)$.  Thus, by the universal property of the orbit category, $F_{dg}$ descends to $\mathcal{C}(A)_{dg}$, and it is clear that $H^0(F_{dg})$ is the functor $F_A$ defined in \ref{inclusion functor}.

\begin{corollary}
\label{dgstable strong}
$A\dgstab$ is equivalent to the triangulated hull $Tr(\mathcal{C}(A)_{gr})$.
\end{corollary}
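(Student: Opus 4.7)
The plan is to extend the dg-functor $F_{dg}: \mathcal{C}(A)_{gr} \to A\dgstab_{dg}$ constructed in the paragraph before the corollary to the pretriangulated hull, and then show the induced triangulated functor is an equivalence by combining the full faithfulness of $F_A$ (Lemma \ref{fully faithful}) with the generation statement (Lemma \ref{generate}), via a standard identification of shifted Hom-spaces on the generators.

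First I would verify that $A\dgstab_{dg}$ is pretriangulated: it is the dg-quotient (in the sense of Drinfeld) of the standard pretriangulated enhancement $A\dgmod_{dg}$, and this property is preserved under dg-quotients. By the universal property of $PreTr$, the dg-functor $F_{dg}$ factors uniquely up to quasi-equivalence through a dg-functor $\widetilde{F}_{dg}: PreTr(\mathcal{C}(A)_{gr}) \to A\dgstab_{dg}$. Applying $H^0$ yields a triangulated functor
\begin{equation*}
G: Tr(\mathcal{C}(A)_{gr}) \to A\dgstab
\end{equation*}
whose restriction to the generating subcategory $\mathcal{C}(A) \cong H^0(\mathcal{C}(A)_{gr}) \hookrightarrow Tr(\mathcal{C}(A)_{gr})$ coincides with $F_A$. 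Essential surjectivity of $G$ is then immediate from Lemma \ref{generate}, since the essential image of $G$ is a triangulated subcategory of $A\dgstab$ containing $F_A(\mathcal{C}(A))$.

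For full faithfulness, the key input is a compatible computation of shifted Hom-spaces on the generators. Since the morphism complexes in $\mathcal{C}(A)_{gr}$ carry zero differential, for $X, Y \in \mathcal{C}(A)$ we have
\begin{equation*}
\Hom_{Tr(\mathcal{C}(A)_{gr})}(X, Y[n]) = H^n(\Hom_{\mathcal{C}(A)_{gr}}(X, Y)) = \Hom_{\mathcal{C}(A)}(X, Y(n)).
\end{equation*}
On the target side, the natural isomorphism $\Omega \cong (-1)$ in $A\dgstab$ from Section \ref{resolutions} identifies the triangulated shift $[n]$ with the grading shift $(n)$, so $\Hom_{A\dgstab}(X, Y[n]) = \Hom_{A\dgstab}(X, Y(n))$. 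Both sides are then identified via the application of $F_A$ to the pair $(X, Y(n))$ of objects of $\mathcal{C}(A)$, which is an isomorphism by Lemma \ref{fully faithful}. Hence $G$ is fully faithful on the full subcategory of $Tr(\mathcal{C}(A)_{gr})$ spanned by $\mathcal{C}(A)$ and all its triangulated shifts.

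A standard two-step five-lemma d\'evissage then extends this to full faithfulness everywhere: for fixed $X \in \mathcal{C}(A)$, the class of $Y$ on which $G$ induces isomorphisms $\Hom(X, Y[n]) \to \Hom(GX, GY[n])$ for all $n$ is triangulated and contains $\mathcal{C}(A)$, hence equals all of $Tr(\mathcal{C}(A)_{gr})$; repeating with $Y$ now arbitrary and $X$ varied gives full faithfulness on the whole category. The main obstacle I anticipate is the bookkeeping in the shift-identification step, matching the triangulated suspension in $Tr(\mathcal{C}(A)_{gr})$ with $[n]$ in $A\dgstab$ through the chain $[n] \leftrightarrow \Omega^{-n} \leftrightarrow (n)$ and checking that $G$ respects this matching (which is ultimately forced by $\widetilde{F}_{dg}$ being a dg-functor). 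Once this compatibility is in place, essential surjectivity and full faithfulness both follow from Theorem \ref{dgstable} by the formal arguments above.
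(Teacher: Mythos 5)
Your proposal is correct and follows essentially the same route as the paper: factor $F_{dg}$ through the pretriangulated hull via its universal property, apply $H^0$, get essential surjectivity from Lemma \ref{generate}, and deduce full faithfulness from Lemma \ref{fully faithful} plus d\'evissage over the generating subcategory. The only differences are cosmetic — you argue $A\dgstab_{dg}$ is itself pretriangulated where the paper routes through $PreTr(A\dgstab_{dg})$ and the isomorphism $A\dgstab \cong Tr(A\dgstab_{dg})$, and you spell out the shift-compatible Hom computation and five-lemma argument that the paper compresses into ``a standard argument.''
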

\begin{proof}
By the universal property of the pretriangulated hull, the dg-functor $F_{dg}: \mathcal{C}(A)_{gr} \rightarrow A\dgstab_{dg} \hookrightarrow PreTr(A\dgstab_{dg})$ factors through a dg-functor $\widehat{F}: PreTr(\mathcal{C}(A)_{gr}) \rightarrow PreTr(A\dgstab_{dg})$.  By construction, $H^0(A\dgstab_{dg}) \cong A\dgstab$ generates $H^0(PreTr(A\dgstab_{dg})) = Tr(A\dgstab_{dg})$ as a triangulated category.  Since $A\dgstab$ is already triangulated, we have that $A\dgstab \cong Tr(A\dgstab_{dg})$.

The functor $H^0(\widehat{F}): Tr(\mathcal{C}(A)_{gr}) \rightarrow A\dgstab$ is exact and restricts to $F_A$ on the image of $\mathcal{C}(A)$ inside $Tr(\mathcal{C}(A)_{gr})$.  The image of $F_A$ generates $A\dgstab$ as a triangulated category, and the image of $H^0(\widehat{F})$ is triangulated, hence $H^0(\widehat{F})$ is essentially surjective.  Furthermore, $H^0(\widehat{F})$ is fully faithful on $\mathcal{C}(A)$.  Since $\mathcal{C}(A)$ generates $Tr(\mathcal{C}(A))$ as a triangulated category, it follows by a standard argument that $H^0(\widehat{F})$ is fully faithful.  Thus $A\dgstab$ is equivalent to $Tr(\mathcal{C}(A)_{gr})$.
\end{proof}

Having proven Corollary \ref{dgstable strong}, we make a few brief remarks on when two graded algebras $A$ and $B$ have equivalent dg-stable categories.  If $D^b(A\grmod)$ is equivalent to $D^b(B\grmod)$, then the equivalence can be expressed as tensoring by a tilting complex.  (See Rickard, \cite{rickard1989morita}).  This functor is still defined on the derived category of dg-modules and remains an equivalence.  Furthermore, this equivalence preserves the perfect dg-modules and thus induces an equivalence between the dgstable categories.  Thus, graded derived equivalence implies dg-stable equivalence.  However, we can say more:

\begin{lemma}
\label{gr to dgstable equivalence}
Let $A$ and $B$ be finite-dimensional self-injective algebras, graded in non-positive degree.  Suppose there is an equivalence of triangulated categories $G:  A\grstab \rightarrow B\grstab$ which commutes with grading shifts.  Then $G$ induces an equivalence between $A\dgstab$ and $B\dgstab$.
\end{lemma}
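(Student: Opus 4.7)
The plan is to apply Corollary \ref{dgstable strong}, which identifies $A\dgstab \simeq Tr(\mathcal{C}(A)_{gr})$ and $B\dgstab \simeq Tr(\mathcal{C}(B)_{gr})$, so that it suffices to produce a quasi-equivalence of dg-categories $\mathcal{C}(A)_{gr} \to \mathcal{C}(B)_{gr}$. Since passing through $PreTr$ and then $H^0$ sends quasi-equivalences to triangulated equivalences, this reduction will finish the proof.

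My first step is to lift $G$ to a dg-functor $G_{gr}: A\grstab_{gr} \to B\grstab_{gr}$. Because $G$ commutes with the grading shift $(1)$, there are compatible isomorphisms $\Hom_{A\grstab}(X, Y(n)) \xrightarrow{\sim} \Hom_{B\grstab}(GX, GY(n))$ for every $n$, which assemble into an isomorphism of graded hom-spaces
\[
\Hom^{\bullet}_{A\grstab_{gr}}(X, Y) \xrightarrow{\sim} \Hom^{\bullet}_{B\grstab_{gr}}(GX, GY).
\]
Since both graded enrichments carry the zero differential (as $A$ and $B$ are dg-algebras with zero differential), this assembly is automatically a dg-functor, and it is a quasi-equivalence because $G$ was an equivalence of ordinary categories.

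Next I would descend $G_{gr}$ to the dg-orbit categories. Because $G$ is triangulated it commutes up to natural isomorphism with the suspension, and under the identification $\Omega \cong [-1]$ in $A\grstab$ and $B\grstab$ recalled in Section \ref{resolutions}, $G_{gr}$ therefore commutes up to natural isomorphism with the autoequivalence $\Omega(1)$. By Keller's universal property of the dg-orbit category, $G_{gr}$ factors through a dg-functor $G_\mathcal{C}: \mathcal{C}(A)_{gr} \to \mathcal{C}(B)_{gr}$; since the hom-spaces of $\mathcal{C}(A)_{gr}$ and $\mathcal{C}(B)_{gr}$ are built by the same colimit-of-shifts recipe from those of $A\grstab_{gr}$ and $B\grstab_{gr}$, the functor $G_\mathcal{C}$ is again a quasi-equivalence. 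Applying $Tr$ and invoking Corollary \ref{dgstable strong} on each side then produces the desired equivalence $A\dgstab \simeq B\dgstab$.

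The main obstacle is that $G$ is only given as an equivalence of ordinary triangulated categories, whereas the triangulated-hull machinery demands a dg-enhancement of $G$. The hypothesis that $G$ commutes with grading shifts is precisely what resolves this: the grading shift endows $A\grstab$ and $B\grstab$ with natural dg-enhancements (with zero differentials on morphism complexes), and any additive functor that intertwines the grading shifts lifts canonically and uniquely to a dg-functor between these enhancements, side-stepping the question of uniqueness of dg-enhancements in general.
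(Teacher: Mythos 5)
Your proposal is correct and follows essentially the same route as the paper: both arguments use the compatibility of $G$ with $\Omega$ and the grading shift to induce an equivalence of the (dg-enriched) orbit categories $\mathcal{C}(A)_{gr} \rightarrow \mathcal{C}(B)_{gr}$, and then pass through the pretriangulated hull and Corollary \ref{dgstable strong} to obtain the equivalence $A\dgstab \simeq B\dgstab$. The only cosmetic difference is that the paper first checks the equivalence at the level of the ordinary orbit categories $\mathcal{C}(A) \rightarrow \mathcal{C}(B)$ and then remarks that the dg-enriched version follows identically, whereas you work directly with the dg-enrichment.
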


\begin{proof}
Since $G$ is a triangulated equivalence, it commutes with $\Omega$.  Thus $G$ commutes with the functor $\Omega(1)$ and induces a functor $\mathcal{C}(A) \rightarrow \mathcal{C}(B)$.  Given $Y \in B\grstab$, there exists $X\in A\grstab$ such that $G(X) \cong Y$ in $B\grstab$, hence in $\mathcal{C}(B)$.  Thus the induced functor on the orbit category is essentially surjective.  Given $X,Y \in A\grstab$, the map $\Hom_{\mathcal{C}(A)}(X,Y) \rightarrow \Hom_{\mathcal{C}(B)}(G(X), G(Y))$ is bijective, since $\Hom_{A\grstab}(X, \Omega^n(Y)(n)) \rightarrow \Hom_{B\grstab}(G(X), \Omega^nG(Y)(n))$ is bijective for each $n\in \mathbb{Z}$.  Thus $G: \mathcal{C}(A) \rightarrow \mathcal{C}(B)$ is an equivalence.  Similarly, $G$ induces an equivalence of dg-categories $\mathcal{C}(A)_{gr} \rightarrow \mathcal{C}(B)_{gr}$.

The composition $\mathcal{C}(A)_{gr} \rightarrow \mathcal{C}(B)_{gr} \rightarrow PreTr(\mathcal{C}(B)_{gr})$ factors through the pretriangulated hull $PreTr(\mathcal{C}(A)_{gr})$.  Applying $H^0$, we obtain an exact functor $\overline{G}: A\dgstab \rightarrow B\dgstab$ extending $G$.  Since $G$ is an equivalence, it follows that $\overline{G}$ is an equivalence.
\end{proof}



\section{Essential Surjectivity}
\label{Essential Surjectivity}

\subsection{Morphisms Concentrated in One Degree}

Let $A$ be a non-positively graded finite-dimensional self-injective algebra over a field $k$.  Let $F_A: \mathcal{C}(A) \rightarrow A\dgstab$ be the functor of Definition \ref{inclusion functor}.  Having shown in the previous section that $F_A$ is fully faithful, we now investigate conditions on $A$ that guarantee essential surjectivity.

Since the image of $\mathcal{C}(A)$ generates $A\dgstab$ as a triangulated category, $F_A$ is essentially surjective if and only if the essential image $Im(F_A)$ is a triangulated subcategory of $A \dgstab$, if and only if $\mathcal{C}(A)$ admits a triangulated structure compatible with $F_A$.  In general, this is not the case.

The primary obstacle to essential surjectivity is that there is no natural candidate for the cone of a morphism $(f_n)_n: X \rightarrow Y$ for which more than one $f_n$ is nonzero.  The cone of such a morphism will correspond to a dg-module that does not arise from a chain complex and need not be isomorphic to a chain complex modulo projectives.

However, by imposing restrictions on the algebra $A$, we can prevent this scenario from occurring.  In this case, we shall see that $F_A$ is essentially surjective, hence an equivalence.

\begin{definition}
\label{niceness}
Let $X, Y \in Im(F_A) \subset A\dgstab$.  We say that a morphism $f: X \rightarrow Y$ is \textbf{chain-like} if $C(f) \in Im(F_A)$.  We say that $X \in Im(F_A)$ is \textbf{nice} if $f: X \rightarrow Y$ is chain-like for all $Y \in Im(F_A)$ and all $f \in \Hom_{A\dgstab}(X,Y)$.
\end{definition}

Note that $Im(F_A)$ is closed under cones (and thus triangulated) if and only if all of its objects are nice.  In fact, it suffices for all the indecomposable objects of $Im(F_A)$ to be nice:

\begin{lemma}
\label{direct sums preserve niceness}
Let $X_1, X_2 \in Im(F_A)$ be nice.  Then $X_1 \oplus X_2$ is nice.
\end{lemma}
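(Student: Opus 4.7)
The plan is to reduce the claim to the niceness hypothesis on each $X_i$ separately by means of the octahedral axiom. Let $f: X_1 \oplus X_2 \rightarrow Y$ be any morphism in $A\dgstab$ with $Y \in Im(F_A)$. Decompose $f$ via the canonical split inclusions $\iota_i: X_i \rightarrow X_1 \oplus X_2$, setting $f_i = f \circ \iota_i$. Our goal is to show that $C(f) \in Im(F_A)$.

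First I would record that the split short exact sequence $0 \rightarrow X_1 \xrightarrow{\iota_1} X_1 \oplus X_2 \xrightarrow{\pi_2} X_2 \rightarrow 0$ yields a distinguished triangle in $A\dgstab$, and in particular $C(\iota_1) \cong X_2$. Next, apply the octahedral axiom to the composable pair $X_1 \xrightarrow{\iota_1} X_1 \oplus X_2 \xrightarrow{f} Y$ whose composition is $f_1$. This produces a distinguished triangle
\[
C(\iota_1) \rightarrow C(f_1) \rightarrow C(f) \rightarrow C(\iota_1)[1],
\]
i.e.\ a triangle of the form $X_2 \xrightarrow{g} C(f_1) \rightarrow C(f) \rightarrow X_2[1]$ for a uniquely determined morphism $g$ (the one induced by $f_2$).

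Now apply the niceness hypotheses. Since $X_1$ is nice and $f_1: X_1 \rightarrow Y$ is a morphism with codomain in $Im(F_A)$, we have $C(f_1) \in Im(F_A)$. Thus $g: X_2 \rightarrow C(f_1)$ is a morphism between objects of $Im(F_A)$, and niceness of $X_2$ gives $C(g) \in Im(F_A)$. The octahedral triangle above identifies $C(g)$ with $C(f)$, so $C(f) \in Im(F_A)$, as required.

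This argument is essentially formal, so I do not expect any serious obstacle; the only subtlety worth mentioning explicitly is that the octahedral axiom is applied in $A\dgstab$, where it is available since $A\dgstab$ is triangulated. Since the morphism $f$ was arbitrary, every morphism out of $X_1 \oplus X_2$ into $Im(F_A)$ is chain-like, so $X_1 \oplus X_2$ is nice.
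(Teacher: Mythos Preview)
Your proof is correct and follows essentially the same approach as the paper: both apply the octahedral axiom to the composition $f_1 = f \circ \iota_1$, identify the cone of $\iota_1$ with $X_2$, and then use niceness of $X_1$ and $X_2$ in succession to conclude that $C(f) \in Im(F_A)$.
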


\begin{proof}
Take $Y\in Im(F_A)$ and $(f_1 \, f_2 ): X_1 \oplus X_2 \rightarrow Y$.  Applying the octahedron axiom to the composition $f_1 = (f_1 \, f_2 ) \circ i_1$, we obtain the following diagram:
\begin{eqnarray*}
\begin{tikzcd}
X_1 \arrow[rr, "i_1"] \arrow[rrrr, bend left, "f_1"] &  & X_1 \oplus X_2 \arrow[rr, " (f_1 \, f_2) "] \arrow[dl] & & Y \arrow[dl]\\
& X_2 \arrow[ul, "(1)"] \arrow[dr, dotted, "g"] & & C(f_1 \, f_2) \arrow[ul, "(1)"] \arrow[ll, "(1)"] &\\
& & C(f_1) \arrow[ur, dotted, "h"] & &
\end{tikzcd}
\end{eqnarray*}

The bottom-most triangle is exact, so $C(f_1 \, f_2)$ is the cone of $g: X_2 \rightarrow C(f_1)$.  Since $X_1$ is nice and $Y\in Im(F_A)$, it follows that $C(f_1) \in Im(F_A)$.  Since $X_2$ is nice, $C(f_1 \, f_2) \in Im(F_A)$.  Thus $X_1 \oplus X_2$ is nice.
\end{proof}

The following condition is sufficient to guarantee that all indecomposables are nice.

\begin{lemma}[One Morphism Rule]
\label{one morphism rule}
Suppose for all indecomposable $X, Y \in A\grmod$, $\Hom_{A\grstab}(X, \Omega^nY(n)) \neq 0$ for at most one $n\in \mathbb{Z}$.  Then every indecomposable object of $Im(F_A)$ is nice.  In particular, $F_A: \mathcal{C}(A) \rightarrow A\dgstab$ is an equivalence.
\end{lemma}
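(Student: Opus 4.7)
The plan is to exploit the one morphism rule to reduce any morphism between objects of $Im(F_A)$ to a morphism already lying in $A\grstab$, whose cone is then automatically in $Im(F_A)$.

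First I would use Lemma \ref{direct sums preserve niceness} together with the Krull--Schmidt property of $\mathcal{C}(A)$ (Proposition \ref{krull-schmidt}) to reduce to showing that every indecomposable $X \in Im(F_A)$ is nice. Since every indecomposable of $\mathcal{C}(A)$ comes from an indecomposable of $A\grstab$, and every indecomposable of $A\grstab$ is the image of an indecomposable object of $A\grmod$, I can assume $X$ is an indecomposable graded module. Then given $Y \in Im(F_A)$ and $f: X \to Y$ in $A\dgstab$, I decompose $Y = \bigoplus_i Y_i$ into indecomposables in $A\grstab$, so each $Y_i$ also comes from an indecomposable graded module.

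Next, since $F_A$ is fully faithful (Lemma \ref{fully faithful}), $f$ corresponds to a tuple $(f^i)$ with
\[
f^i \in \Hom_{\mathcal{C}(A)}(X, Y_i) = \bigoplus_{n\in\mathbb{Z}} \Hom_{A\grstab}(X, \Omega^n Y_i(n)).
\]
The one morphism rule forces at most one term in each sum to be nonzero, so $f^i$ is represented by a single morphism $g^i: X \to \Omega^{n_i}Y_i(n_i)$ in $A\grstab$ (taking $n_i = 0$ when $f^i = 0$). By the formula of Definition \ref{inclusion functor}, $F_A(f^i) = \psi_{n_i, Y_i} \circ g^i$, so precomposing with the inverse of the isomorphism $\bigoplus_i \psi_{n_i, Y_i}: \bigoplus_i \Omega^{n_i}Y_i(n_i) \xrightarrow{\sim} Y$ in $A\dgstab$ identifies $F_A(f)$ with the morphism $g := (g^i): X \to \bigoplus_i \Omega^{n_i}Y_i(n_i)$ that lies entirely in $A\grstab$.

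Finally I would observe that the natural functor $A\grstab \to A\dgstab$ is triangulated, since it arises from the hat embedding $A\grmod \hookrightarrow A\dgmod$, which satisfies $\widehat{X^\bullet[1]} = \widehat{X}(1)$ and thus carries the cone of complexes to the cone of dg-modules. Consequently, the cone of $g$ computed in $A\grstab$ agrees with $C(F_A(f))$ in $A\dgstab$, and both lie in $Im(F_A)$. This shows $X$ is nice. Once every indecomposable of $Im(F_A)$ is nice, Lemma \ref{direct sums preserve niceness} makes $Im(F_A)$ closed under cones, and it is already closed under the shift $(1) \cong \Omega^{-1}$; so $Im(F_A)$ is a triangulated subcategory of $A\dgstab$, which by Lemma \ref{generate} forces $Im(F_A) = A\dgstab$. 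The main technical point is the identification of $F_A(f)$ with a genuine $A\grstab$-morphism after applying the $\psi_{n_i, Y_i}$: if two different values of $n$ contributed nontrivially to some $f^i$, there would be no single target in $A\grstab$ into which to collapse them, and the argument would break down. This is precisely the obstacle the one morphism rule eliminates.
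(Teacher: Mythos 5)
Your proof is correct and follows essentially the same route as the paper: reduce to an indecomposable $X$ via Lemma \ref{direct sums preserve niceness}, use the one-morphism rule and full faithfulness to collapse $f$ to a single $A\grstab$-morphism after adjusting each $Y_i$ by $\psi_{n_i,Y_i}$, and then take the cone in $A\grstab$. The only cosmetic difference is the last step: the paper realizes the cone explicitly as the cokernel of $X \hookrightarrow M \oplus I$ (with $I$ an injective hull) and uses the induced triangle in $D^b_{dg}(A)$, whereas you appeal to exactness of the natural functor $A\grstab \to A\dgstab$ induced by $\widehat{\phantom{x}}$, which amounts to the same thing.
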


\begin{proof}
Let $X \in A\grmod$ be indecomposable.  Let $M = \bigoplus_{i = 1}^n Y_i \in Im(F_A)$, with $Y_i$ indecomposable.  Changing each $Y_i$ up to isomorphism, we may assume without loss of generality that $\Hom_{A\grstab}(X, \Omega^nY_i(n)) = 0$ for $n \neq 0$.  Then any morphism $(f_n)_n:  X\rightarrow M$ in $\mathcal{C}(A)$ is concentrated in degree $0$ and thus can be identified with the morphism $f_0$ in $A\grstab$.  Since $F_A$ is fully faithful, any morphism $f: X \rightarrow M$ in $A\dgstab$ can be represented by a morphism in $A\grmod$.  

Choosing a monomorphism $i: X \hookrightarrow I$, where $I$ is injective, we obtain a short exact sequence of graded $A$-modules $0 \rightarrow X \xrightarrow{(f \, i)}  M \oplus I \rightarrow C \rightarrow 0$ which induces an exact triangle in $D^b_{dg}(A)$, hence in $A\dgstab$.  Since $I \cong 0$ in $A\dgstab$, this triangle is equivalent to one of the form $X \xrightarrow{f} M \rightarrow C \rightarrow X(1)$.  $C$ is a cone of $f$ and lies in the image of $F_A$ (since it is in $A\grmod$).  Thus $X$ is nice.

The second statement follows immediately from Lemma \ref{direct sums preserve niceness} and the preceding remarks.
\end{proof}

\begin{remark}  The hypotheses of Lemma \ref{one morphism rule} are quite restrictive.  However, we note that if $A$ is concentrated in degree 0 (that is, ungraded), then the One Morphism Rule is trivially satisfied.

In this case, any indecomposable object $X\in A\grmod$ is concentrated in a single degree $n$, and so $\Omega^nX(n)$ is concentrated in degree $0$.  Thus every object of $\mathcal{C}(A)$ is isomorphic to an object concentrated in degree zero, and $\Hom_{\mathcal{C}(A)}(X, Y)$ $\cong \Hom_{A\stab}(X,Y)$ for any two such objects $X$ and $Y$.  Thus $\mathcal{C}(A)$ is equivalent to $A\stab$.

Furthermore, a dg-module over $A$ is the same as a complex of $A$-modules.  Thus $A\dgstab = D^b_{dg}(A)/D^{perf}_{dg}(A) = D^b(A\mmod)/D^{perf}(A\mmod)$.  Thus, in the case where $A$ an ungraded finite-dimensional, self-injective algebra, Theorem \ref{dgstable} and Lemma \ref{one morphism rule} precisely yield Rickard's Theorem \cite{rickard1989derived} that $A\stab \cong D^b(A\mmod)/D^{perf}(A\mmod)$.  Thus it is appropriate to view $\mathcal{C}(A)$ as the differential graded analogue of the additive definition of the stable module category.
\end{remark}

\subsection{Nakayama Algebras}

\begin{definition}
A \textbf{Nakayama algebra} is a finite-dimensional algebra for which all indecomposable projective and injective modules are uniserial.
\end{definition}

Since every indecomposable module has an indecomposable projective cover, it follows that every indecomposable module over a Nakayama algebra is uniserial.

\begin{proposition}
\label{uniserial loop lemma}
Let $A$ be a finite-dimensional, self-injective Nakayama algebra, graded in non-positive degree.  Let $X \in A\grmod$ be indecomposable and not projective.  Let $p_X: P_X \twoheadrightarrow X$ be a projective cover of $X$ and let $i_X: X \hookrightarrow I_X$ be an injective hull of $X$.  Let $K = ker(p_X)$ and $C = coker(i_X)$.  Then $max(K) \le min(X)$, and $max(X) \le min(C)$.
\end{proposition}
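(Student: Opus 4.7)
The plan is to exploit that every indecomposable module over a Nakayama algebra is uniserial, and to reduce both inequalities to a single statement about how degrees behave along the radical filtration of an indecomposable projective or injective. Concretely, $X$, $P_X$, $I_X$, $K$, and $C$ are all uniserial; writing $m = \ell(X)$, the uniserial structure of $P_X$ forces $K = \operatorname{rad}^m(P_X)$, and writing $m' = \ell(I_X) - \ell(X)$, it forces $X = \operatorname{rad}^{m'}(I_X)$ and $C = I_X/\operatorname{rad}^{m'}(I_X)$. All that remains is to compare the degrees of the relevant composition factors.

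The technical core is the following claim: for any graded uniserial module $Y$ of length $\ell$, each factor $S_i := \operatorname{rad}^i(Y)/\operatorname{rad}^{i+1}(Y)$ is concentrated in a single degree $d_i$, and $d_0 \ge d_1 \ge \cdots \ge d_{\ell-1}$. That $S_i$ is concentrated in one degree uses non-positivity of the grading: for any graded simple $S$, if $d = \max(S)$ then $\bigoplus_{e<d} S^e$ is closed under right multiplication by $A$ (since $A^{\le 0}$ can only lower degree), so it is a submodule, which must vanish. To see $d_i \le d_{i-1}$, use $\operatorname{rad}^i(Y) = \operatorname{rad}^{i-1}(Y)\cdot \operatorname{rad}(A)$: a homogeneous element of degree $d_i$ representing a nonzero class in $S_i$ lifts to some $y \in \operatorname{rad}^i(Y) \setminus \operatorname{rad}^{i+1}(Y)$, which may be written $y = \sum_j \tilde y_j a_j$ with $\tilde y_j \in \operatorname{rad}^{i-1}(Y)$ homogeneous, $a_j \in \operatorname{rad}(A)$ homogeneous, and $|\tilde y_j a_j| = d_i$. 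Some $\tilde y_j$ must lie outside $\operatorname{rad}^i(Y)$, so its image in $S_{i-1}$ is nonzero and hence $|\tilde y_j| = d_{i-1}$; then $|a_j| = d_i - d_{i-1} \le 0$.

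Granted the claim, both inequalities are immediate applications of Proposition \ref{head lemma}. For the first, $K = \operatorname{rad}^m(P_X)$ is uniserial with head $S_m$, so $\max(K) = \max(\operatorname{hd}(K)) = d_m$; and $\operatorname{soc}(X) = \operatorname{rad}^{m-1}(P_X)/\operatorname{rad}^m(P_X) = S_{m-1}$, so $\min(X) = \min(\operatorname{soc}(X)) = d_{m-1}$. The monotonicity $d_m \le d_{m-1}$ gives $\max(K) \le \min(X)$. For the second, apply the same analysis to the radical filtration of $I_X$, whose composition factor degrees we denote $e_0 \ge e_1 \ge \cdots \ge e_{\ell(I_X)-1}$; then $\max(X) = \max(\operatorname{hd}(X)) = e_{m'}$ and $\min(C) = \min(\operatorname{soc}(C)) = e_{m'-1}$, and monotonicity yields $\max(X) \le \min(C)$.

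The main obstacle is the monotonicity step: this is where the hypothesis that $A$ is non-positively graded enters essentially, and it is the only non-bookkeeping part of the argument. Everything else is assembling uniserial structure with Proposition \ref{head lemma} and the identification of the radical powers of a uniserial projective or injective.
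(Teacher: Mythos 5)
Your proof is correct and follows essentially the same route as the paper: both arguments identify $K$ and $\operatorname{soc}(X)$ (and dually $\operatorname{hd}(X)$ and $\operatorname{soc}(C)$) with adjacent radical layers of the uniserial module $P_X$ (resp.\ $I_X$) and reduce everything, via Proposition \ref{head lemma}, to comparing the degrees of two consecutive simple layers. The only difference is how that comparison is justified: you re-run the radical-peeling argument from the proof of Proposition \ref{head lemma} to establish monotonicity of all layer degrees, whereas the paper obtains the one needed inequality by applying Proposition \ref{head lemma} to the length-two subquotient $M=\operatorname{rad}^{l(X)-1}(P_X)/\operatorname{rad}^{l(X)+1}(P_X)$, for which $\min(M)\le\max(M)$ is trivial.
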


\begin{proof}
For any $k\ge 0$ and $Y \in A\grmod$, let $L^k(Y) = rad^k(Y)/rad^{k+1}(Y)$ be the $k$-th radical layer of $Y$.  Let $l(Y)$ denote the length of $Y$.  If $Y$ is indecomposable, then it is uniserial and so $L^k(Y)$ is simple for $0 \le k < l(Y)$.

Since $X$ is indecomposable, $P_X$ is indecomposable, hence uniserial, and we have that $K = ker(p_X) = rad^{l(X)}(P_X)$ and $X \cong P_X/rad^{l(X)}(P_X)$.  Let $M = rad^{l(X)-1}(P_X)/rad^{l(X)+1}(P_X)$.  Then $hd(M) = L^{l(X)-1}(P_X) = soc(X)$ and $soc(M) = L^{l(X)}(P_X) = hd(K)$ are simple.  Thus,
\begin{align*}
max(K) &= max(hd(K))
 = max(soc(M))
 = min(soc(M))
 = min(M)\\
 &\le max(M)
 = max(hd(M))
 = max(soc(X))
 = min(soc(X))\\
 &= min(X)
\end{align*}

The proof of the second inequality is precisely dual, using the socle layers of $I_X$.
\end{proof}

\begin{lemma}
\label{uniserial}
Let $A$ be a non-positively graded, finite-dimensional, self-injective Naka\-yama algebra.  Then the conditions of Lemma \ref{one morphism rule} are satisfied.  In particular, $F_A$ is an equivalence.
\end{lemma}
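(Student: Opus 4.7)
The plan is to leverage Proposition~\ref{uniserial loop lemma} to show that the supports of the modules $Y_n := \Omega^n Y(n)$ form strictly separated, monotonically descending intervals in $\mathbb{Z}$ as $n$ varies, and then to combine this with the uniserial structure of indecomposable modules over a Nakayama algebra to force any nonzero morphism $X \to Y_n$ to exist for at most one value of $n$.

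First I would reduce to the case where both $X$ and $Y$ are indecomposable and non-projective; if either is projective then it is already zero in $A\grstab$ and there is nothing to prove.  Since $A$ is self-injective and Nakayama, the projective cover $P_Y$ of an indecomposable module $Y$ is indecomposable and uniserial, so $\Omega Y = ker(P_Y \twoheadrightarrow Y)$ is a uniserial submodule of $P_Y$ and hence indecomposable.  It is also non-projective: if $\Omega Y$ were projective (hence injective, since $A$ is self-injective), the defining short exact sequence would split, contradicting the indecomposability of $P_Y$.  The dual argument, using indecomposable injective hulls and their cokernels, shows that $\Omega^{-1} Y$ is indecomposable non-projective as well.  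Iterating, every $\Omega^n Y$ is indecomposable and non-projective, so Proposition~\ref{uniserial loop lemma} is applicable at each step.

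Next I would iterate Proposition~\ref{uniserial loop lemma} to establish $min(Y_n) > max(Y_{n+1})$ for every $n \in \mathbb{Z}$.  For $n \ge 0$, applying the proposition to $\Omega^n Y$ gives $max(\Omega^{n+1} Y) \le min(\Omega^n Y)$, and then
\[
max(Y_{n+1}) = max(\Omega^{n+1}Y) - (n+1) \le min(\Omega^n Y) - n - 1 < min(Y_n);
\]
the case $n \le -1$ is dual, using the cokernel inequality applied to $\Omega^{n+1} Y$.  By transitivity, $min(Y_m) > max(Y_n)$ whenever $m < n$, so the supports $[min(Y_n), max(Y_n)]$ are pairwise disjoint and descend strictly as $n$ grows.

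Finally, I would analyze any nonzero graded morphism $f: X \to Y_n$ through its image $X' := f(X)$.  As a graded quotient of the uniserial module $X$, the module $X'$ is uniserial with the same head as $X$, so $max(X') = max(X)$; as a nonzero graded submodule of the uniserial module $Y_n$, it must contain the unique simple submodule $soc(Y_n)$, whence $min(X') = min(Y_n)$.  Nonvanishing therefore forces $min(Y_n) \le max(X) \le max(Y_n)$.  If this held for two indices $n_1 < n_2$, the chain
\[
max(X) \le max(Y_{n_2}) < min(Y_{n_1}) \le max(X)
\]
would be a contradiction.  Hence $\Hom_{A\grmod}(X, Y_n)$, and a fortiori $\Hom_{A\grstab}(X, Y_n)$, is nonzero for at most one $n$, which is precisely the hypothesis of Lemma~\ref{one morphism rule}.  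The main subtlety I anticipate is justifying that $\Omega^n Y$ remains indecomposable and non-projective for every $n$, so that Proposition~\ref{uniserial loop lemma} is genuinely available at each step of the iteration; once that is in hand the support bookkeeping and uniseriality argument are essentially forced.
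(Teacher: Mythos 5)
Your proof is correct and follows essentially the same route as the paper's: both iterate Proposition \ref{uniserial loop lemma} to show the supports of the stable representatives of $\Omega^n Y(n)$ descend strictly and disjointly, and both use uniseriality (head of the image equals head of $X$, image contains the socle of the target) to pin down where a nonzero morphism can live. Your two-sided bookkeeping ($min(Y_{n}) \le max(X) \le max(Y_{n})$ for two indices yielding a contradiction) versus the paper's normalization to $m=0$ followed by induction in each direction is only a cosmetic difference, and your care in replacing $\Omega^n Y$ by the kernel-of-projective-cover representative matches the paper's identification of $\Omega(Y)$ with its unique non-projective summand.
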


\begin{proof}
Let $X, Y \in A\grmod$ be indecomposable, and suppose that there is a nonzero morphism $f: X \rightarrow \Omega^m(Y)(m)$ in $A\grstab$ for some $m \in \mathbb{Z}$.  Changing $Y$ up to isomorphism in $Im(F_A) \subset A\dgstab$, we may assume that $m = 0$.  Then there is a nonzero morphism from $X$ to $Y$ in $A\grmod$, and so $max(hd(X)) \ge min(soc(Y))$.

Note that $\Omega(Y) \in A\grmod$ has a unique (up to isomorphism) non-projective direct summand $K$, which is the kernel of a projective cover of $Y$.  Then $\Omega(Y) \cong K$ in $A\grstab$, hence in $Im(F_A)$.  Identifying $\Omega(Y)$ with $K$, Proposition \ref{uniserial loop lemma} states that $max(\Omega(Y)(1)) < min(soc(Y)) \le max(hd(X)) = min(hd(X))$.  Thus $\Hom_{A\grstab}(X, \Omega Y(1)) = 0$ and, by induction, $\Hom_{A\grstab}(X, \Omega^n Y(n)) = 0$ for all $n > 0$.  A dual argument shows that $max(hd(X)) < min(\Omega^{-1}Y(-1))$ and so $\Hom_{A\grstab}(X, \Omega^n Y(n)) = 0$ for all $n < 0$.
\end{proof}


\subsection{An Example of the Failure of Essential Surjectivity}

Let $A =k[x,y]/(x^2, y^2)$, where $k = \mathbb{C}$.  We grade $A$ by putting $x$ in degree $0$ and $y$ in degree $-1$.  It is easy to check that $A$ is symmetric, hence self-injective.  Up to grading shift, $A$ has a single simple graded module, $S$, which has dimension one and upon which both $x$ and $y$ act by zero.  Therefore, up to grading shift, the only indecomposable projective module is $A$ itself.

The representation theory of $A$ is closely related to that of the Kronecker quiver, 
\begin{eqnarray*}
\begin{tikzcd}
v_1 \arrow[r, bend left, "a"] \arrow[r, bend right, swap, "b"] & v_2
\end{tikzcd}
\end{eqnarray*}
We let $B$ denote the path algebra of this quiver, with $a$ in degree 0 and $b$ in degree $-1$.  $B$ has two simple modules $S_1$ and $S_2$, one corresponding to each vertex.  There is a one-to-one correspondence between the indecomposable graded $A$-modules, excluding the projective module, and the graded $B$-modules, excluding the simple module $S_2$. (See Chapter 4.3 of \cite{benson1991representations} for the ungraded case.  Note that the graded case follows from the same argument.)

The classification of graded indecomposable $B$-modules is known.  (For instance, see Seidel \cite{seidel2004exact}, Section 4.)  Transferring these results to $A$-modules, we obtain the following classification of the indecomposable graded $A$-modules.  Up to shift, these are:\\
1)  The indecomposable projective module, $A$.\\
2)  For $n \ge 0$, the module $K^n$, which is of dimension $2n + 1$.  As a graded vector space, $K^n = V \oplus W$, where $V = \bigoplus^n_{i = 0} k(i)$, $W = \bigoplus^n_{i = 1} k(i)$, and $x$ and $y$ act by mapping $V$ into $W$ via the matrices
$\begin{pmatrix}
0		& 1	 	& 		& 0\\
\vdots	& 	 	& \ddots	& \\
0		& 0		&		&1
\end{pmatrix}$
and
$\begin{pmatrix}
1	 & 		& 0	& 0\\
 	 & \ddots	&	& \vdots\\
0	&		& 1	& 0
\end{pmatrix}$
, respectively.  Note that in $A\grstab$ we have that $K^n \cong \Omega^nS$ for all $n\ge 0$.  We shall use the notation $\Omega^n S$ going forward.\\
3)  For $n < 0$, the module $K^n$, which is of dimension $2n + 1$.  As a graded vector space, $K^n = V \oplus W$, where $V = \bigoplus^n_{i = 1} k(-i)$, $W = \bigoplus^n_{i = 0} k(-i)$, and $x$ and $y$ act by mapping $V$ into $W$ via the matrices
$\begin{pmatrix}
0		& \dots 	& 0\\
1		& 		& 0\\
 	 	& \ddots	& \\
 0		&		&1
\end{pmatrix}$
and
$\begin{pmatrix}
1		& 		& 0\\
 	 	& \ddots	& \\
 0		&		&1\\
 0		& \dots 	& 0
\end{pmatrix}$
, respectively.  Once again, we note that $K^n \cong \Omega^nS$ in $A\grstab$ for all $n < 0$.  We shall use the notation $\Omega^n S$ going forward.\\
4)  For $n > 0$, the module $M_{0, n}$, which is of dimension $2n$.  As a graded vector space, $M_{0, n} = V \oplus W$, where $V =  \bigoplus^{n}_{i = 1} k(-i)$, $W = \bigoplus^{n-1}_{i = 0} k(-i)$, and $x$ and $y$ act by mapping $V$ into $W$ via the $n\times n$ matrices
$\begin{pmatrix}
0		& \dots 	&		& 0\\
1		& 		& 0 		& \\
 	 	& \ddots	& 		& \vdots\\
 0		&		&1		& 0
\end{pmatrix}$
and
$\begin{pmatrix}
1	&  		& 0\\
 	& \ddots	& \\
 0	&		& 1
\end{pmatrix}$
, respectively.\\
5)  For $n > 0$, the module $M_{\infty, n}$, which is of dimension $2n$.  As a graded vector space, $M_{\infty, n} = V \oplus W$, where $V = W =  \bigoplus^{n-1}_{i = 0} k(-i)$, and $x$ and $y$ act by mapping $V$ into $W$ via the $n\times n$ matrices
$\begin{pmatrix}
1	&  		& 0\\
 	& \ddots	& \\
 0	&		& 1
\end{pmatrix}$
and
$\begin{pmatrix}
0		& 1 		&		& 0\\
\vdots	& 		& \ddots 	& \\
 	 	& 0		& 		& 1\\
 0		&		& \dots	& 0
\end{pmatrix}$
, respectively.

Note that for any of the modules described in 2-5 above, $hd(X) \cong V$ and $soc(X) \cong W$ as graded modules, each with $x$ and $y$ acting by $0$.

The following computations are straightforward; we leave them to the reader.  Below, $n\ge 0$ and $m \ge 1$.\\
$dim\Hom_{A\grstab}(S, \Omega^mS(k)) = 
\begin{cases}
1 & -m \le k \le -1\\
0 & o.w.
\end{cases}$
\\
$dim\Hom_{A\grstab}(S, \Omega^{-n}S(k)) = 
\begin{cases}
1 & 0 \le k \le n\\
0 & o.w.
\end{cases}$
\\
$dim\Hom_{A\grstab}(S, M_{0, m}(k)) = 
\begin{cases}
1 & 0 \le k \le m-1 \\
0 & o.w.
\end{cases}$
\\
$dim\Hom_{A\grstab}(S, M_{\infty, m}(k)) = 
\begin{cases}
1 & 0 \le k \le m-1 \\
0 & o.w.
\end{cases}$
\\
$dim\Hom_{A\grstab}(M_{0, m}, S(k)) = 
\begin{cases}
1 & -m \le k \le -1 \\
0 & o.w.
\end{cases}$
\\
$dim\Hom_{A\grstab}(M_{\infty, m}, S(k)) = 
\begin{cases}
1 & -m +1 \le k \le 0 \\
0 & o.w.
\end{cases}$
\\
$dim\Hom_{A\grstab}(M_{\infty, m}, M_{\infty, 1}(k)) = 
\begin{cases}
1 & k = 0, -m +1 \\
0 & o.w.
\end{cases}$
\\
$dim\Hom_{A\grstab}(M_{0, m}, M_{0, 1}(k)) = 
\begin{cases}
1 & k = 0, -m\\
0 & o.w.
\end{cases}$
\\

In $\mathcal{C}(A)$, functors $\Omega$ and $(-1)$ are isomorphic, so our list of indecomposable objects shrinks.  In $A\grstab$, note that $\Omega M_{0, n} = M_{0,n}$ and $\Omega M_{\infty, n} = M_{\infty, n}(1)$; thus $M_{0, n} \cong M_{0, n}(1)$ and $M_{\infty, n} \cong M_{\infty, n}(2)$ in $\mathcal{C}(A)$.  Thus, a complete list of indecomposable objects in $\mathcal{C}(A)$ up to isomorphism is:\\
1)  $S(n)$, for $n \in \mathbb{Z}$.\\
2)  $M_{0, m}$, for $m > 0$.\\
3)  $M_{\infty, m}(n)$, for $m > 0$ and $n \in \{0, 1\}$.

The sizes of the following Hom sets in $A\dgstab$ are an immediate consequence of the above computations for $A\grstab$ and some simple counting arguments.\\
$dim\Hom_{A\dgstab}(S, S(n)) = 
\begin{cases}
\lfloor \frac{n}{2} \rfloor + 1 & n \ge 0\\
\lfloor \frac{|n|}{2} \rfloor & n < 0
\end{cases}$
\\
$dim\Hom_{A\dgstab}(S, M_{0, m}) = m$
\\
$dim\Hom_{A\dgstab}(S, M_{\infty, m}(n)) =
\begin{cases}
\lfloor \frac{m+1}{2} \rfloor  & n \equiv 0 \mod 2\\
\lfloor \frac{m}{2} \rfloor & n \equiv 1 \mod 2
\end{cases}$
\\
$dim\Hom_{A\dgstab}(M_{0, m}, S(n)) = m$
\\
$dim\Hom_{A\dgstab}(M_{0, m}, M_{0, 1}) = 2$
\\
$dim\Hom_{A\dgstab}(M_{\infty, m}(r), S(n)) = 
\begin{cases}
\lfloor \frac{m+1}{2} \rfloor  & n - r \equiv 0 \mod 2\\
\lfloor \frac{m}{2} \rfloor & n - r \equiv 1 \mod 2
\end{cases}$
\\
$dim\Hom_{A\dgstab}( M_{\infty, m}, M_{\infty, 1}) =
\begin{cases}
1 & m \equiv 0 \mod 2\\
2  & m \equiv 1 \mod 2
\end{cases}$
\\
$dim\Hom_{A\dgstab}(M_{\infty, m}, M_{\infty, 1}(1)) =
\begin{cases}
1 & m \equiv 0 \mod 2\\
0  & m \equiv 1 \mod 2
\end{cases}$

We are now ready to construct an object $K$ of $A\dgstab$ lying outside of $\mathcal{C}(A)$.  From the above computations, we have that $dim\Hom_{A\dgstab}(S, S(2)) = 2$; for a basis we can take the unique (up to a nonzero scalar) morphisms $f_{-1}: S \rightarrow \Omega^{-1}(S)(1) \cong S(2)$ and $f_{-2}: S \rightarrow \Omega^{-2}S \cong S(2)$.  Let $g = f_{-1} + f_{-2}$, and let $K$ be the cone of $g$ in $A\dgstab$.  We shall show that $K$ does not lie in the image of $F_A$.

\begin{proposition}
\label{counterexample pt 1}
$dim\Hom_{A\dgstab}(K, S(n)) = 1$ for all $n \ge 3$
\end{proposition}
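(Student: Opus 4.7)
The plan is to apply $\Hom_{A\dgstab}(-, S(n))$ to the distinguished triangle $S \xrightarrow{g} S(2) \to K \to S(1)$ defining $K$ (recalling that $(1)$ is the shift functor on $A\dgstab$, since $(-1) \cong \Omega$). Writing $V_m := \Hom_{A\dgstab}(S, S(m))$ and using the shift identification $\Hom_{A\dgstab}(S(k), S(n)) = V_{n-k}$, the resulting long exact sequence takes the form
\[
V_{n-3} \xrightarrow{g^{*}} V_{n-1} \to \Hom_{A\dgstab}(K, S(n)) \to V_{n-2} \xrightarrow{g^{*}} V_n,
\]
where $g^{*}$ denotes precomposition with the appropriate shift of $g$. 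Given the dimension formulas already computed in the excerpt, the desired conclusion will follow once one shows that $g^{*}: V_m \to V_{m+2}$ is injective for every $m \ge 0$: then the sequence collapses to $\Hom_{A\dgstab}(K, S(n)) \cong V_{n-1}/g V_{n-3}$, whose dimension is $(\lfloor (n-1)/2 \rfloor + 1) - (\lfloor (n-3)/2 \rfloor + 1) = 1$ for $n \ge 3$.

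To prove the injectivity I would endow $V_{\ge 0} := \bigoplus_{m \ge 0} V_m$ with its natural graded algebra structure (with multiplication $\psi \cdot \phi := \psi(\deg \phi) \circ \phi$), and identify it with the Yoneda algebra $\bigoplus_{k \ge 0,\, j} \Ext^{k}_{A\grmod}(S, S)_{j}$ equipped with total grading $m = k + j$. Because $A = k[x, y]/(x^2, y^2)$ is a quadratic Koszul algebra, this Yoneda algebra is a polynomial ring $k[\alpha, \beta]$ on two generators of cohomological degree $1$ and internal degrees $0$ and $1$ respectively, so that in total degree $|\alpha| = 1$ and $|\beta| = 2$. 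Under this identification $f_{-1} = \eta_{1,1}$ and $f_{-2} = \eta_{2,0}$ correspond to $\beta$ and $\alpha^{2}$; hence $g = \beta + \alpha^{2}$ is a nonzero element of the integral domain $V_{\ge 0}$, and multiplication by $g$ is injective on every $V_m$, $m \ge 0$.

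The main technical obstacle is this polynomial-ring identification. If one prefers to avoid invoking Koszul duality, a concrete substitute is available: take the basis $\{\eta_{k, j} : 0 \le j \le k\}$ of $\bigoplus_j \Hom_{A\grstab}(S, \Omega^{-k}S(j))$ provided by the Hom computations in the excerpt, and verify step-by-step from the minimal projective resolution of $S$ that the Yoneda product satisfies $\eta_{k,j} \cdot \eta_{k',j'} = c\, \eta_{k+k', j+j'}$ with $c \ne 0$ whenever $0 \le j+j' \le k+k'$. In those terms, the matrix of $g^{*}: V_m \to V_{m+2}$ in the bases $\{\eta_{k, m-k}\}$ and $\{\eta_{k', m+2-k'}\}$ is staircase-shaped with two adjacent nonzero diagonals (one contributed by $\eta_{1,1}$ and the other by $\eta_{2,0}$); it visibly has full column rank, giving the required injectivity and thus the dimension count.
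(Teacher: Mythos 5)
Your overall strategy coincides with the paper's: both apply $\Hom_{A\dgstab}(-,S(n))$ to the defining triangle and reduce everything to the injectivity of precomposition with (shifts of) $g$ on the spaces $V_m=\Hom_{A\dgstab}(S,S(m))$, $m\ge 0$, after which the dimension count $\lfloor(n-1)/2\rfloor-\lfloor(n-3)/2\rfloor=1$ is the same. Where you differ is in how that injectivity is established. The paper argues directly in the orbit category: it first checks that composing any nonzero $h_s\colon S(2)\to\Omega^sS(r+s)$ with $f_{-1}$ or with $f_{-2}$ is again nonzero in $A\grstab$, and then runs a leading-term argument on the top nonvanishing component of a hypothetical element of $\ker g^*$. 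Your ``concrete substitute'' (the bidiagonal staircase matrix with full column rank) is exactly this argument in matrix form, so that version is essentially the paper's proof. Your primary route --- identifying $\bigoplus_m V_m$ with the Yoneda algebra $\Ext^\bullet_{A\grmod}(S,S)$ under the total grading $m=k+j$ and invoking regularity of $g=\beta+\alpha^2$ --- is a genuinely cleaner conceptual packaging: it explains \emph{why} the staircase has full rank and would generalize to other elements of $\Hom_{A\dgstab}(S,S(2))$ without recomputation. The trade-off is that you must justify the multiplicative identification of the orbit-category composition with the Yoneda product (you rightly flag this as the main technical burden), which the paper's hands-on computation avoids.

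One correction to the Koszul-duality step: for $A=k[x,y]/(x^2,y^2)$ the Yoneda algebra is not the commutative polynomial ring $k[\alpha,\beta]$ but its graded-commutative analogue $k\langle\alpha,\beta\rangle/(\alpha\beta+\beta\alpha)$, since $\alpha$ and $\beta$ both sit in cohomological degree $1$ (it is the Koszul-sign tensor product $k[\alpha]\,\widehat{\otimes}\,k[\beta]$ of the Yoneda algebras of the two factors $k[x]/(x^2)$ and $k[y]/(y^2)$). This does not damage your argument: that skew polynomial ring is an iterated Ore extension of a field, hence still a domain, so left or right multiplication by the nonzero homogeneous element $\beta+\alpha^2$ remains injective in every total degree. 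But as written the claim ``polynomial ring'' is false and should be replaced by ``graded-commutative (hence domain)'' before the regularity conclusion is drawn.
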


\begin{proof}

Consider the triangle $S \xrightarrow{g} S(2) \rightarrow K \rightarrow S(1)$ which defines $K$.  Choosing some $n \ge 2$, we apply $\Hom_{A\dgstab}(-, S(n))$ and observe the resulting long exact sequence.  We will show that $g(-k)^*: \Hom_{A\dgstab}(S(2-k), S(n)) \rightarrow$ $\Hom_{A\dgstab}(S(-k), S(n))$ is injective for all $k \ge 0$.  From this, it will follow from the long exact sequence that
\begin{align*}
dim\Hom_{A\dgstab}(K(-k-1), S(n)) &= dim\Hom_{A\dgstab}(S(-k), S(n))\\
 &- dim\Hom_{A\dgstab}(S(2-k), S(n))\\
 &= 1
\end{align*}
for all $k \ge 0$, and we will have $dim\Hom_{A\dgstab}(K, S(n)) = 1$ for all $n \ge 3$.

Applying the functor $(k)$, it suffices to show that $g^*: \Hom_{A\dgstab}(S(2), S(r))$ $\rightarrow$ $\Hom_{A\dgstab}(S, S(r))$ is injective for all $r \ge 2$, where $r = n+k$.  Interpreting $f_{-1}$ and $f_{-2}$ as morphisms in $A\dgstab$, we have that $g^* = f^*_{-1} + f^*_{-2}$.  If we are given a nonzero morphism $h_s: S(2) \rightarrow \Omega^s S(r+s)$ in $A\grstab$, a straightforward computation shows that both $\Omega^{-1}h_s(-1) \circ f_{-1}: S \rightarrow \Omega^{s-1}S(r+s-1)$ and $\Omega^{-2}h_s(-2) \circ f_{-2}: S \rightarrow$ $\Omega^{s-2}S(r+s-2)$ are nonzero morphisms in $A\grstab$.  It follows immediately that $f^*_{-1}$ and $f^*_{-2}$ are injective.

We now show that $g^*$ is injective.  Let $(h_s)_s : S(2) \rightarrow S(r)$ in $\mathcal{C}(A)$.  Note that $h_s$ can be nonzero only when $-r + 2 \le s \le -\lceil \frac{r}{2} \rceil +1$.  Therefore $g^*(h_s)_s = (a_s)_s$, where\\
\begin{equation*}
a_s =
\begin{cases}
\Omega^{-2}h_{-r + 2}(-2) \circ f_{-2}& \text{ if } s = -r\\
\Omega^{-1}h_{s+1}(-1) \circ f_{-1} + \Omega^{-2}h_{s+2}(-2) \circ f_{-2} & \text{ if } -r < s < -\lceil \frac{r}{2} \rceil\\
\Omega^{-1}h_{-\lceil \frac{r}{2} \rceil +1}(-1) \circ f_{-1}& \text{ if } s = -\lceil \frac{r}{2}\rceil\\
0& \text{ otherwise }
\end{cases}
\end{equation*}

Now suppose that $g^*(h_s) = 0$.  If $(h_s)_s \neq 0$, let $N$ be the maximum $s$ such that $h_s$ is nonzero.  By injectivity of $f^*_{-1}$, we have that $N < -\lceil \frac{r}{2} \rceil + 1$, and by injectivity of $f^*_{-2}$, we have that $N > -r +2$. But then
\begin{align*}
0 = a_{N-1} &= \Omega^{-1}h_N(-1) \circ f_{-1} + \Omega^{-2} h_{N+1}(-2) \circ f_{-2}\\
 &= \Omega^{-1}h_N(-1) \circ f_{-1} + 0
\end{align*}

Injectivity of $f^*_{-1}$ implies that $\Omega^{-1}h_N(-1) = 0$, hence $h_N = 0$.  As this contradicts the definition of $N$, we must have that $h_s = 0$ for all $s$, and so $g^*$ is injective for all $r \ge 2$.  Thus $dim\Hom_{A\dgstab}(K, S(n)) = 1$ for all $n \ge 3$.
\end{proof}

Proposition \ref{counterexample pt 1} and the above computations of Hom spaces show that $K$ cannot be isomorphic to any object of $\mathcal{C}(A)$ except possibly $M_{\infty, 2}$, $M_{\infty, 2}(1)$, $M_{\infty, 1} \oplus M_{\infty, 1}(1)$, or $M_{0, 1}$.

\begin{proposition}
\label{counterexample pt 2}
$\Hom_{A\dgstab}(K, M_{\infty, 1}(k)) = 0$ for all $k$.
\end{proposition}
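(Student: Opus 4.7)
The plan is to apply $\Hom_{A\dgstab}(-, M_{\infty,1}(k))$ to the triangle $S \xrightarrow{g} S(2) \to K \to S(1)$ defining $K$ and extract the desired vanishing from the resulting long exact sequence. Write $M = M_{\infty,1}$. From the tables of $\Hom$-dimensions recorded just before the statement, $\dim \Hom_{A\dgstab}(S(a), M(k)) = 1$ when $a + k$ is even and $0$ otherwise. Consequently, for each parity of $k$ exactly two of the end-terms of the LES are one-dimensional and the other two vanish, and in both parities the claim $\Hom_{A\dgstab}(K, M(k)) = 0$ reduces via exactness to the single statement that $g^* \colon \Hom_{A\dgstab}(S(2), M(k')) \to \Hom_{A\dgstab}(S, M(k'))$ is nonzero for every even $k'$. (For $k$ odd one invokes the shifted pullback $g(1)^*$, which after shifting targets by $(-1)$ is the same map $g^*$ with target $M(k-1)$, of even parity.)

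The next step is to compute $g^*$ in the orbit category $\mathcal{C}(A)$ via the fully faithful embedding of Theorem \ref{dgstable}. Because $\Omega M \cong M(1)$ in $A\grstab$, a nonzero element of $\Hom_{\mathcal{C}(A)}(S(2), M(k'))$ is concentrated in the single graded component $r = 1 - k'/2$, namely a nonzero $h_{1-k'/2} \in \Hom_{A\grstab}(S(2), M(2))$. Applying the composition rule of Definition \ref{nice category} decomposes $g^*(h) = h \circ f_{-1} + h \circ f_{-2}$ into its graded components; the two potential nonzero summands land in distinct degrees $n = -k'/2$ and $n = -1 - k'/2$ respectively. The second component takes values in $\Hom_{A\grstab}(S, M(-2))$, which vanishes because $M_{\infty,1}$ has support $\{0\}$. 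Hence $h \circ f_{-2} = 0$, and $g^*(h)$ reduces to its $n = -k'/2$ component, the composition $\Omega^{-1}h_{1-k'/2}(-1) \circ f_{-1,-1} \colon S \to \Omega^{-1}S(1) \to M$ in $A\grstab$.

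Finally, I would verify that this surviving composition is nonzero in the one-dimensional space $\Hom_{A\grstab}(S, M)$. Since $\Omega$ is an auto-equivalence of $A\grstab$, the claim transports to showing that the composition $\Omega S \to S(1) \to M(1)$ is nonzero, where the first factor generates $\Hom_{A\grstab}(\Omega S, S(1))$ and the second is the socle inclusion. Identifying $\Omega S$ with $\mathrm{rad}(A)$, the first factor is the map $y \mapsto 1, \, x \mapsto 0, \, xy \mapsto 0$, and the composite therefore sends $y$ to the socle generator of $M(1)$. To show this survives passage to $A\grstab$, I would check that it does not factor through any graded projective: since $\mathrm{supp}(M(1)) = \{-1\}$, the only shifted free module admitting a nonzero map to $M(1)$ is $A(1)$, and a direct computation using the relations $y^2 = 0$ and $A^1 = 0$ in $A$ yields $\Hom_{A\grmod}(\Omega S, A(1)) = 0$. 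Thus the composition is nonzero in $A\grstab$, whence $g^*(h) \neq 0$.

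The main obstacle will be this last step: although $h \circ f_{-1}$ is a nonzero composition into a one-dimensional target, one must rule out that it vanishes in $A\grstab$ through an unexpected factorization via a projective. The rigidity that prevents this comes from the quadratic relation $y^2 = 0$ in $A$, which kills $\Hom_{A\grmod}(\Omega S, A(1))$ and eliminates the sole possible projective factorization.
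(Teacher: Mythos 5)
Your proposal is correct and follows essentially the same route as the paper: apply $\Hom_{A\dgstab}(-, M_{\infty,1}(k))$ to the defining triangle, reduce by parity and periodicity to showing $g^*$ is nonzero between the two one-dimensional spaces, and then observe that the $f_{-2}$-component dies for support/degree reasons while the $f_{-1}$-component survives. The only difference is that you spell out the module-level verification (the non-factorization through $A(1)$ via $y^2=0$) that the paper dismisses as "a simple computation in $A\grstab$."
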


\begin{proof}
Again consider the triangle $S \xrightarrow{g} S(2) \rightarrow K \rightarrow S(1)$ defining $K$ and write $g =  f_{-1} + f_{-2}$.  Applying $\Hom_{A\dgstab}(-, M_{\infty, 1})$, we again show that $g^*(k): \Hom_{A\dgstab}(S(2+k), M_{\infty, 1}) \rightarrow \Hom_{A\dgstab}(S(k), M_{\infty, 1})$ is an isomorphism for all $k$.  As in Proposition \ref{counterexample pt 1}, we shall apply $(-k)$ and work instead with $g^*: \Hom_{A\dgstab}(S(2), M_{\infty, 1}(-k)) \rightarrow$ $\Hom_{A\dgstab}(S, M_{\infty, 1}(-k))$.  Since $M_{\infty, 1} \cong M_{\infty, 1}(2)$, it suffices to consider the cases $k = 0$ and $k = 1$.

If $k = 1$, both spaces are zero, and the result is immediate.  If $k = 0$, both spaces are one-dimensional, so it is enough to show that $g^*$ is not the zero map.  The unique morphism $S(2) \rightarrow M_{\infty, 1}$ is (up to rescaling) of the form $h_1: S(2) \rightarrow \Omega M_{\infty, 1}(1)$.  Then $g^*(h_1) = (r_n)_n$, where\\
\begin{equation*}
r_n = \begin{cases}
\Omega^{-1}h_1 (-1) \circ f_{-1}& \text{ if } n = 0\\
\Omega^{-2}h_1 (-2) \circ f_{-2}& \text{ if } n = -1\\
0 & \text{ otherwise}
\end{cases}
\end{equation*}

A simple computation in $A\grstab$ shows that $r_{-1} = 0$ and and $r_0$ is a nonzero element of $\Hom_{A\grstab}(S, M_{\infty, 1})$, whence $g^*$ is nonzero.  Thus $g^*$ is an isomorphism in all cases, and so $\Hom_{A\dgstab}(K, M_{\infty, 1}(k)) = 0$ for all $k$.
\end{proof}

Proposition \ref{counterexample pt 2} eliminates all remaining possibilities for $K$ except for $M_{0,1}$.  This final possibility can be eliminated by proving:

\begin{proposition}
\label{counterexample pt 3}
$\Hom_{A\dgstab}(K, M_{0, 1}) = 0$
\end{proposition}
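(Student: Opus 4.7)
The plan is to mirror the approach of Propositions \ref{counterexample pt 1} and \ref{counterexample pt 2}: apply the contravariant functor $\Hom_{A\dgstab}(-,M_{0,1})$ to the defining triangle $S \xrightarrow{g} S(2) \to K \to S(1)$ and exploit the resulting long exact sequence. Because $\Omega M_{0,1} \cong M_{0,1}$ in $A\grstab$, the isomorphism $M_{0,1}(1)\cong M_{0,1}$ holds in $A\dgstab$, and together with the computation $\dim\Hom_{A\dgstab}(S,M_{0,1})=1$ this forces every $\Hom_{A\dgstab}(S(k),M_{0,1})$ to be one-dimensional. Consequently, showing that $g^*:\Hom_{A\dgstab}(S(2),M_{0,1}) \to \Hom_{A\dgstab}(S,M_{0,1})$ is nonzero automatically gives the corresponding statement for $g(1)^*$ (by twisting with $(-1)$), and then the long exact sequence forces $\Hom_{A\dgstab}(K,M_{0,1})=0$.

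To check that $g^*$ is nonzero I pass to $\mathcal{C}(A)$ via $F_A$. A generator $h : S(2) \to M_{0,1}$ in $\mathcal{C}(A)$ has a unique nonzero component $h_2 : S(2) \to \Omega^{2}M_{0,1}(2)$ in $A\grstab$, at orbit index $n=2$. Writing $g=f_{-1}+f_{-2}$ and applying the composition formula of Definition \ref{nice category}, the contribution of $f_{-1}$ to $g^*(h)$ lands in orbit index $j=1$, inside $\Hom_{A\grstab}(S,\Omega M_{0,1}(1))\cong \Hom_{A\grstab}(S,M_{0,1}(1))=0$, so it vanishes. The contribution of $f_{-2}$ is the composite $\Omega^{-2}h_2(-2)\circ f_{-2}$, living in orbit index $j=0$ inside the one-dimensional space $\Hom_{A\grstab}(S,\Omega^{-2}M_{0,1})\cong \Hom_{A\grstab}(S,M_{0,1})$.

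The main---indeed the only nontrivial---step is verifying that $\Omega^{-2}h_2(-2)\circ f_{-2}$ is nonzero, equivalently that the Yoneda product $f_{-2}\cdot h \in \Ext^2(S,M_{0,1})$ does not vanish. I would establish this by a direct cocycle computation using the minimal projective resolution of $S$: one has $P_1=A\oplus A(1)$ and $P_2=A\oplus A(1)\oplus A(2)$, with differential on $P_2$ recording the three generators of $\Omega^2 S$. The space of internal-degree-zero cochains $\Hom^0(P_2,M_{0,1})$ is one-dimensional, the incoming differential from $\Hom^0(P_1,M_{0,1})$ is zero, and when $f_{-2}$ is represented by the cocycle $\phi\colon P_2\to S$ that sends the generator of the $A$-summand to a generator of $S$ and kills the other summands, the composite $h\circ\phi$ manifestly generates $\Hom^0(P_2,M_{0,1})$. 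This yields the required nonzero Yoneda class and completes the argument.
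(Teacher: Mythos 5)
Your proposal is correct and follows essentially the same route as the paper's own proof: apply $\Hom_{A\dgstab}(-,M_{0,1})$ to the defining triangle, use $M_{0,1}\cong M_{0,1}(1)$ to reduce everything to the single map $g^{*}$ between one-dimensional spaces, and check that the $f_{-1}$-contribution vanishes (for degree reasons, since $\Hom_{A\grstab}(S,M_{0,1}(1))=0$) while the $f_{-2}$-contribution $\Omega^{-2}h_2(-2)\circ f_{-2}$ generates $\Hom_{A\grstab}(S,M_{0,1})$. The only difference is that you make explicit, via the minimal projective resolution of $S$ and the Yoneda-product cocycle argument, the step the paper leaves as ``a straightforward computation,'' and that verification is accurate.
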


\begin{proof}
Once again, we consider the triangle $S \xrightarrow{g} S(2) \rightarrow K \rightarrow S(1)$ defining $K$ and write $g =  f_{-1} + f_{-2}$.  We show $g^*(k): \Hom_{A\dgstab}(S(2+k), M_{0, 1}) \rightarrow \Hom_{A\dgstab}(S(k), M_{0, 1})$ is an isomorphism for all $k$.  Applying $(-k)$ and using the identity $M_{0,1} \cong M_{0,1}(1)$, we show that $g^*: \Hom_{A\dgstab}(S(2), M_{0, 1}) \rightarrow \Hom_{A\dgstab}(S, M_{0, 1})$ is an isomorphism.  Since both spaces are one-dimensional, it suffices to show that the map is nonzero.

The generator of $\Hom_{A\dgstab}(S(2), M_{0, 1})$ is $h_2 : S(2) \rightarrow \Omega^2 M_{0,1}(2)$, and so $g^*(h_2) = (r_n)_n$, where
\begin{equation*}
r_n = \begin{cases}
\Omega^{-1}h_2(-1) \circ f_{-1}& \text{ if } n =1\\
\Omega^{-2}h_2(-2) \circ f_{-2}& \text{ if } n =0\\
0 & \text{ otherwise}
\end{cases}
\end{equation*}

A straightforward computation shows that $r_1 = 0$ and $r_0$ is the generator of $\Hom_{A\dgstab}(S, M_{0,1})$.  Thus $g^*$ is an isomorphism and $\Hom_{A\dgstab}(K, M_{0,1}) = 0$.
\end{proof}

\begin{corollary}
$K$ does not lie in the image of $F_A$.  In particular, $F_A$ is not essentially surjective.
\end{corollary}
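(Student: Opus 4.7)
The plan is to combine the three preceding propositions with the Krull--Schmidt structure of $\mathcal{C}(A)$. Because $F_A$ is fully faithful (Theorem \ref{dgstable}), it sends indecomposables in the Krull--Schmidt category $\mathcal{C}(A)$ (Proposition \ref{krull-schmidt}) to indecomposables in $A\dgstab$ with isomorphic (local) endomorphism rings; so if $K$ lay in the essential image of $F_A$, then by uniqueness of Krull--Schmidt decompositions in $A\dgstab$ it would decompose as a finite direct sum of objects drawn from the classification $\{S(n),\, M_{0,m},\, M_{\infty,m}(n)\}$ listed above. To show $K$ is not in the image, I would rule out every such finite direct sum.

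First, I would apply Proposition \ref{counterexample pt 1}, which forces $\dim \Hom_{A\dgstab}(K, S(n)) = 1$ for all $n \ge 3$. Using the tabulated dimensions, this condition rules out any summand $S(m)$ (where $\dim \Hom(S(m),S(n))$ grows with $n$) and any summand $M_{0,m}$ with $m > 1$ (constant value $m > 1$), while forcing the $M_{\infty,m}(r)$ summands to combine so that their parity-dependent contributions add up to exactly $1$ for every $n$. A straightforward totaling over direct summands then leaves only the four candidates the author lists: $M_{\infty, 2}$, $M_{\infty, 2}(1)$, $M_{\infty, 1} \oplus M_{\infty, 1}(1)$, and $M_{0,1}$. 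Proposition \ref{counterexample pt 2}, which asserts $\Hom_{A\dgstab}(K, M_{\infty, 1}(k)) = 0$ for all $k$, eliminates the three $M_\infty$-candidates, since the tables exhibit for each of them a nonzero morphism to some shift of $M_{\infty,1}$. Finally, Proposition \ref{counterexample pt 3} excludes $M_{0,1}$, because $\End_{A\dgstab}(M_{0,1}) \neq 0$ but $\Hom_{A\dgstab}(K, M_{0,1}) = 0$.

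Having eliminated every possibility, $K$ is not isomorphic in $A\dgstab$ to any object of $\mathcal{C}(A)$, so $K$ lies outside the essential image of $F_A$; hence $F_A$ is not essentially surjective. The only delicate step is the direct-sum narrowing: one must check that no combination of indecomposables outside the four listed can yield the dimension-$1$ profile of Proposition \ref{counterexample pt 1} at every $n \ge 3$. This is an elementary arithmetic check on the tabulated Hom dimensions, and is the main bookkeeping obstacle; the rest of the argument is a direct application of the preceding three propositions.
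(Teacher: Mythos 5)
Your proposal is correct and follows essentially the same route as the paper: the text preceding the corollary uses Proposition \ref{counterexample pt 1} together with the tabulated Hom dimensions to reduce to the four candidates $M_{\infty,2}$, $M_{\infty,2}(1)$, $M_{\infty,1}\oplus M_{\infty,1}(1)$, $M_{0,1}$, and then Propositions \ref{counterexample pt 2} and \ref{counterexample pt 3} eliminate these. Your extra attention to the Krull--Schmidt bookkeeping in the narrowing step is exactly the part the paper leaves implicit, and your reasoning there is sound.
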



\section{Brauer Tree Algebras}
\label{Brauer Tree Algebras}

In this section we shall prove that the functor $F_A$ of Theorem \ref{dgstable} is an equivalence whenever the algebra $A$ is any non-positively graded Brauer tree algebra.  We shall work over an algebraically closed field $k$.

A \textbf{Brauer tree} consists of the data $\Gamma = (T, e, v, m)$, where $T$ is a tree, $e$ is the number of edges of $T$, $v$ is a vertex of $T$, called the \textbf{exceptional vertex}, and $m$ is a positive integer, called the \textbf{multiplicity} of $v$.  To any Brauer tree $\Gamma$, we can associate a basic finite-dimensional symmetric algebra $A_\Gamma$.  For the details of this process, we refer to \cite{schroll2018brauer}.

An important special case is $S = (S, n, v, m)$, the star with $n$ edges and exceptional vertex at the center.  In this case, the algebra $A_{S}$ is a Nakayama algebra whose indecomposable projective modules have length $nm+1$.

The following theorems are due to Bogdanic:
\begin{theorem}{(Bogdanic, \cite{bogdanic2010graded}, Theorem 4.3 and Lemma 4.9)}
\label{grading transferance}
Let $S$ be the star with $n$ vertices and multiplicity $m$.  Let $A_S$ be graded so that $soc(A_S)$ is in degree $nm$.  Let $\Gamma$ be any Brauer tree with $n$ vertices and multiplicity $m$.  Then $A_{\Gamma}$ admits a non-negative grading such that $soc(A_{\Gamma})$ is in degree $nm$, and there is an equivalence $D^b(A_S\grmod)\rightarrow D^b(A_{\Gamma}\grmod)$.
\end{theorem}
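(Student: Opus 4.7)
The plan is to reduce the theorem to Rickard's classical result that any two Brauer tree algebras with the same number of edges and the same multiplicity are derived equivalent, and then lift this equivalence from the ungraded to the graded setting by equipping the relevant tilting complex with a natural grading.

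First, I would equip $A_S$ with its path grading, placing every arrow of the Brauer quiver in degree one. Since $A_S$ is the Nakayama algebra whose indecomposable projectives are uniserial of length $nm+1$, this grading is non-negative and $soc(A_S)$ lies in degree $nm$, so the hypothesis on $A_S$ is satisfied tautologically. In particular, this fixes an explicit non-negatively graded model for the star side.

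Next, I would invoke Rickard's theorem to obtain an explicit tilting complex $T \in D^b(A_S\mmod)$ with $\End_{D^b(A_S\mmod)}(T) \cong A_\Gamma$. The complex $T$ is built by composing a sequence of two-term Rickard (Kauer) tilts, each corresponding to an elementary edge move on the Brauer tree that transforms the star step by step into $\Gamma$; each elementary tilt replaces a single indecomposable projective $P_i$ by the cone of a canonical map $P_i \to \bigoplus_j P_j$, where the sum runs over projectives adjacent to the moved edge. To promote this to the graded setting, I would grade each indecomposable summand of $T$ canonically by placing its top in degree zero and then choose the grading on each differential so that it becomes a degree-zero map of graded modules. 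This yields a graded tilting complex $\widetilde{T} \in D^b(A_S\grmod)$, and tensoring with $\widetilde{T}$ produces the desired exact equivalence $D^b(A_S\grmod) \to D^b(A_\Gamma\grmod)$, where the right-hand side is defined using the induced grading on $\End(\widetilde{T})$.

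The main obstacle is to verify that the induced grading on $A_\Gamma \cong \End_{D^b(A_S\grmod)}(\widetilde{T})$ is non-negative and concentrates $soc(A_\Gamma)$ in degree $nm$. This reduces to tracking the internal grading shifts accumulated under the iterated elementary tilts: each elementary tilt introduces a controlled shift whose sign and magnitude are dictated by the cyclic ordering of edges around the relevant vertex, and one must argue that the composition of these shifts produces generators of $A_\Gamma$ in non-negative degree. The claim that the socle sits exactly in degree $nm$ is then best established by a global argument: under the equivalence, any path in the Brauer quiver of $A_\Gamma$ from a simple to its socle corresponds to walking $m$ times around the exceptional vertex in the star, accumulating total degree $nm$, which matches the fact that $A_\Gamma$ is symmetric with Nakayama permutation of the appropriate order. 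Carrying out this bookkeeping explicitly, edge move by edge move, is the technical heart of Bogdanic's argument in \cite{bogdanic2010graded}.
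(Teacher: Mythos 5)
This statement is not proved in the paper at all: it is imported verbatim from Bogdanic (\cite{bogdanic2010graded}, Theorem 4.3 and Lemma 4.9) and used as a black box, so there is no internal proof to compare against. Measured against what the cited result actually requires, your outline follows the standard strategy (Rickard's derived equivalence between the star and an arbitrary Brauer tree, realized by an iterated Kauer/Rickard tilting complex, followed by transfer of the grading along the equivalence), but as written it has two genuine gaps.

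First, the step ``grade each indecomposable summand of $T$ canonically by placing its top in degree zero and then choose the grading on each differential so that it becomes a degree-zero map'' is not a construction but an assertion of the hardest point. The summands of $T$ are complexes of projectives, not modules, so ``top'' is not defined; what is actually needed is the existence of a graded lift of each indecomposable summand of the tilting complex, i.e.\ an object of $D^b(A_S\grmod)$ mapping to it under the forgetful functor. Gradability of an object of the derived category is a nontrivial condition (the standard criterion is invariance under the grading-twist autoequivalence, and even then the lift is unique only up to a grading shift, which must be normalized consistently across summands for $\End(\widetilde{T})$ to carry a well-defined grading with $\End(\widetilde{T})^0$ containing the idempotents). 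Establishing this for the iterated elementary tilts is precisely the content of Bogdanic's Theorem 4.3, and your proposal assumes it. Second, the non-negativity of the induced grading on $A_\Gamma$ and the placement of $soc(A_\Gamma)$ in degree $nm$ (Bogdanic's Lemma 4.9) are exactly the quantitative conclusions of the theorem, and your proposal explicitly defers the bookkeeping that would prove them. So the proposal is a reasonable road map to the cited literature, but it does not itself constitute a proof of the statement.
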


\begin{theorem}{(Bogdanic, \cite{bogdanic2010graded}, Section 11)} 
\label{Brauer tree gradings}
Let $\Gamma$ be a Brauer tree with multiplicity $m$.  Then, up to graded Morita equivalence and rescaling, $A_{\Gamma}$ possesses a unique grading.  The socle of $A_{\Gamma}$ lies in degree $dm$ for some $d \in \mathbb{Z}$, and the grading is determined up to graded Morita equivalence by $d$.  If $d > 0$, the grading can be chosen to be non-negative, and if $d<0$ the grading can be chosen to be non-positive.
\end{theorem}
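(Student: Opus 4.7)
The plan is to reduce the classification of gradings to a combinatorial problem on the tree $\Gamma$ and then invoke Theorem \ref{grading transferance} for existence. Recall that $A_\Gamma$ is presented as a quiver algebra: the quiver $Q_\Gamma$ has a vertex for each edge of $\Gamma$ and, at each vertex $v$ of $\Gamma$ with cyclically ordered incident edges $e_1, \dots, e_{k_v}$, arrows $e_1\to e_2, \dots, e_{k_v}\to e_1$; the relations are the Brauer tree relations (equality of Nakayama cycles around a common vertex, vanishing of over-long cycles, and vanishing of incompatible compositions). Any $\mathbb{Z}$-grading on $A_\Gamma$ amounts to a choice of integer degree on each arrow for which these relations become homogeneous. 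In particular, the cycle-equality relation at each vertex $v$ of $\Gamma$ forces every length-$k_v$ Nakayama cycle at $v$ to share a common total degree $d_v$, and the over-length relations then force no further constraints among the $d_v$.

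Next I would read off the socle constraint. For each edge $e$ of $\Gamma$ with endpoints $v,w$, the socle of $P_e$ is one-dimensional and simultaneously equals the $m_v$-th power of the Nakayama cycle at $v$ and the $m_w$-th power of the Nakayama cycle at $w$; here $m_v = m$ if $v$ is exceptional and $m_v = 1$ otherwise. Homogeneity forces $d_v m_v = d_w m_w$ for each edge, and connectedness of $\Gamma$ then implies the existence of a single integer $D$ with $d_v = D/m_v$ for every $v$. In particular $m \mid D$, so $D = dm$ for some $d \in \mathbb{Z}$, establishing the claimed form of the socle degree.

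For uniqueness up to graded Morita equivalence I would argue cohomologically. Graded Morita equivalences between basic graded algebras are generated by grading shifts on individual projectives: assigning an integer $n_e$ to each vertex $e$ of $Q_\Gamma$ replaces the degree of every arrow $\alpha \colon e \to e'$ by $\deg(\alpha) + n_e - n_{e'}$. Given two gradings $g_1, g_2$ with the same socle degree $dm$, the difference $\delta = g_1 - g_2$ is a $1$-cochain on $Q_\Gamma$ whose total is zero around every Nakayama cycle. Exploiting the tree structure of $\Gamma$, I would build the desired $n_e$ inductively: pick any edge $e_0$ of $\Gamma$, set $n_{e_0}=0$, and for each vertex $v$ encountered while traversing $\Gamma$ outward, use the cycle-sum-zero condition to extend $n$ consistently around the cyclic sub-quiver at $v$. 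Because $\Gamma$ has no loops, no incompatibility arises between cyclic sub-quivers, so $\delta$ is a coboundary and $g_1 \sim_{\text{Morita}} g_2$.

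Finally, existence of a grading achieving each $d$ reduces to the case of the star $S$: there $A_S$ is a cyclic Nakayama algebra whose quiver is a single $n$-cycle, and assigning non-negative degrees summing to $d > 0$ around the cycle produces a non-negative grading with socle in degree $dm$ (and likewise non-positive degrees summing to $d < 0$ for the opposite sign). Applying Theorem \ref{grading transferance} transports these gradings across the derived equivalence $A_S \to A_\Gamma$, and rescaling degrees by a positive integer identifies $d$ with its positive multiples, yielding the claimed unique grading up to graded Morita equivalence and rescaling. I expect the main technical obstacle to be Step 3: one must verify carefully that the relations beyond cycle-equality (in particular the zero relations for overlong products and for incompatible compositions) impose no further constraints on $\delta$ than the Nakayama cycle-sum-zero condition, and that the transfer of a grading through the derived equivalence of Theorem \ref{grading transferance} indeed preserves the socle degree rather than merely its divisibility class.
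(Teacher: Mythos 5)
First, a point of comparison: the paper does not prove this statement at all — it is imported verbatim from Bogdanic \cite{bogdanic2010graded} with a citation and used as a black box, so there is no internal proof to measure yours against. Bogdanic's own argument is, moreover, of a different flavour from yours: rather than working combinatorially with arrow degrees, he classifies gradings via cocharacters $k^{\times}\rightarrow \mathrm{Out}(A_{\Gamma})$ (two gradings being equivalent when the cocharacters are conjugate) and computes the identity component of the outer automorphism group of a Brauer tree algebra, whose maximal torus turns out to be one-dimensional; uniqueness up to equivalence and rescaling then falls out conceptually.

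Judged on its own terms, the combinatorial core of your argument is sound, but two genuine gaps remain. The first is at the very start: you assume that an arbitrary $\mathbb{Z}$-grading on $A_{\Gamma}$ is given by integer degrees on the arrows of the standard presentation with the idempotents $e_i$ in degree $0$. That reduction — conjugating by an inner automorphism so that a complete set of primitive orthogonal idempotents becomes homogeneous of degree $0$, then choosing homogeneous arrow representatives in $e_i\,rad(A)e_j$ and checking that the Brauer tree relations can still be arranged to hold on the nose — is where most of the real content lies, and it is omitted. The second is the existence step: Theorem \ref{grading transferance} as stated only produces the single grading with socle in degree $nm$, so it cannot deliver arbitrary $d$, and invoking Bogdanic's grading-transfer machinery to prove Bogdanic's classification skirts circularity. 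Fortunately the detour is unnecessary: once the arrow-degree picture is in place, the zero relations are monomial and hence homogeneous for any assignment, so the only constraints are the cycle-equality ones; each arrow lies in exactly one Nakayama cycle, so one obtains a grading with socle in degree $dm$ by distributing $d_v=D/m_v$ over the $k_v$ arrows at $v$, non-negatively when $d>0$ and non-positively when $d<0$. For the uniqueness step you should also record explicitly the fact you are leaning on, namely that the Nakayama cycles form a basis of the cycle space of $Q_{\Gamma}$ (they have pairwise disjoint arrow sets and their number equals $b_1(Q_{\Gamma})$ precisely because $\Gamma$ is a tree), so that a $1$-cochain vanishing on all of them is a coboundary; the direction you need — that a coboundary difference of gradings yields a graded Morita equivalence by shifting each projective — is standard and correctly used.
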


From these facts, we obtain the following result:
\begin{corollary}
\label{Brauer tree stable equivalence}
Let $\Gamma$ be a Brauer tree.  Let $S$ be the star with the same multiplicity and number of edges.  Let $A_{\Gamma}$ and $A_S$ be graded so that $soc(A_{\Gamma})$ and $soc(A_S)$ lie in degree $d$ for some $d\in \mathbb{Z}$.  Then we have equivalences of triangulated categories $D^b(A_S \grmod) \rightarrow D^b(A_{\Gamma}\grmod)$ and $A_S\grstab \rightarrow A_{\Gamma}\grstab$. 
\end{corollary}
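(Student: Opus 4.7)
The plan is to combine Bogdanic's two theorems with the standard fact that a graded derived equivalence between self-injective algebras descends to a graded stable equivalence. The proof has two main parts.

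First, I would establish the derived equivalence $D^b(A_S\grmod) \to D^b(A_{\Gamma}\grmod)$. By Theorem \ref{Brauer tree gradings}, once the socle degree $d$ is fixed, the grading on each of $A_S$ and $A_{\Gamma}$ is unique up to graded Morita equivalence; in particular $d$ must be a multiple of $m$, say $d = d'm$. The case $d' = n$ (the natural non-negative grading with $soc$ in degree $nm$) is exactly Theorem \ref{grading transferance}. For other values of $d'$, one reduces to this case using the operations sanctioned by Theorem \ref{Brauer tree gradings}: rescaling (which multiplies all degrees by an integer) and, for passing between $d' > 0$ and $d' < 0$, the grading-reversal functor $X \mapsto X^-$ where $(X^-)^n := X^{-n}$, which is an isomorphism of categories of graded modules and hence preserves all derived-categorical structure. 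After these reductions, Theorem \ref{grading transferance} applies and yields the desired triangulated equivalence.

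Second, I would descend the derived equivalence to the stable categories. This is a general principle: any triangulated equivalence $\Phi: D^b(A_S\grmod) \to D^b(A_{\Gamma}\grmod)$ preserves compact objects, and for finite-dimensional graded algebras the compact objects in $D^b(-\grmod)$ coincide with the perfect complexes $D^{perf}(-\grmod)$ (they are the thick subcategory generated by the free module of rank one, up to grading shift). Hence $\Phi$ restricts to an equivalence on perfect complexes and therefore induces an equivalence on Verdier quotients $D^b(-\grmod)/D^{perf}(-\grmod)$. By the graded version of Rickard's theorem recalled in Section \ref{module definitions}, these Verdier quotients are precisely $A_S\grstab$ and $A_{\Gamma}\grstab$, yielding the second equivalence.

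The main obstacle is verifying that Theorem \ref{grading transferance} genuinely extends to arbitrary admissible socle degrees $d$, rather than only $d = nm$. If the cited result is only proved for the canonical non-negative grading, one must check that rescaling and grading-reversal really do preserve the specific derived equivalence that Bogdanic constructs. In practice the equivalence is induced by a graded refinement of Rickard's tilting complex for Brauer tree algebras, whose construction is intrinsic to the combinatorics of the tree, so tracking it through a rescaling or grading-reversal of both sides is routine. The only subtlety is making the identifications coherent on both $A_S$ and $A_{\Gamma}$ simultaneously, which is guaranteed by the uniqueness-up-to-graded-Morita-equivalence clause of Theorem \ref{Brauer tree gradings}.
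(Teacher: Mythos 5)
Your proposal matches the paper's (implicit) argument: the paper likewise obtains the derived equivalence by combining Bogdanic's two theorems and then descends to the graded stable categories via Rickard's identification $A\grstab \cong D^b(A\grmod)/D^{perf}(A\grmod)$. One small caution: the claim that the equivalence preserves perfect complexes should not be justified by compactness in $D^b(-\grmod)$ (which lacks infinite coproducts, so every object is vacuously compact there); the correct standard argument characterizes perfect complexes intrinsically as the homologically finite objects, i.e.\ those $X$ with $\Hom(X, Y[i]) = 0$ for all $Y$ and all but finitely many $i$, which is the form in which Rickard's descent is proved.
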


\begin{theorem}
\label{Brauer tree surjectivity}
Let $\Gamma = (T, e, v, m)$ be a Brauer graph, and let $A_\Gamma$ be non-positively graded with socle in degree $-d \le 0$.  Then $F_{A_\Gamma}$ is an equivalence.
\end{theorem}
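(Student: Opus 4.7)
The plan is to reduce the Brauer tree case to the star case by transport via the equivalence of Corollary \ref{Brauer tree stable equivalence}, then invoke the Nakayama result (Lemma \ref{uniserial}) which handles the star.

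First, let $S$ be the star with the same number of edges and multiplicity as $\Gamma$, and grade $A_S$ non-positively so that $soc(A_S)$ also lies in degree $-d$. Then $A_S$ is a non-positively graded self-injective Nakayama algebra, so by Lemma \ref{uniserial} the functor $F_{A_S}: \mathcal{C}(A_S) \to A_S\dgstab$ is an equivalence. By Corollary \ref{Brauer tree stable equivalence}, there is a triangulated equivalence $G: A_S\grstab \to A_\Gamma\grstab$ coming from Bogdanic's graded derived equivalence. To apply Lemma \ref{gr to dgstable equivalence}, I need to verify that $G$ commutes with the grading shift functor $(1)$. This is the first technical point to check: the derived equivalence is realized by tensoring with a tilting complex of graded bimodules, and because the grading shift $(1)$ is induced by an endofunctor of each graded module category that is naturally compatible with any tensor functor between such categories, the induced equivalence on derived and stable categories intertwines $(1)$.

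Given this, Lemma \ref{gr to dgstable equivalence} produces an equivalence $\overline{G}: A_S\dgstab \to A_\Gamma\dgstab$ together with a compatible equivalence $G: \mathcal{C}(A_S) \to \mathcal{C}(A_\Gamma)$ on orbit categories. From the construction in the proof of Lemma \ref{gr to dgstable equivalence} (lifting through the pretriangulated hull of $\mathcal{C}(\cdot)_{gr}$), the square
\begin{equation*}
\begin{tikzcd}
\mathcal{C}(A_S) \arrow[r, "F_{A_S}"] \arrow[d, "G"'] & A_S\dgstab \arrow[d, "\overline{G}"] \\
\mathcal{C}(A_\Gamma) \arrow[r, "F_{A_\Gamma}"'] & A_\Gamma\dgstab
\end{tikzcd}
\end{equation*}
commutes up to natural isomorphism, since both $F_{A_S}$ and $F_{A_\Gamma}$ are the $H^0$ of the natural dg-functor $\mathcal{C}(\cdot)_{gr} \to (\cdot)\dgstab_{dg}$ described in the previous section, and $\overline{G}$ is defined precisely to extend $G$ through this construction.

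Now $F_{A_\Gamma}$ is fully faithful by Theorem \ref{dgstable}, so it suffices to prove essential surjectivity. Given $Y \in A_\Gamma\dgstab$, essential surjectivity of $\overline{G}$ gives $X \in A_S\dgstab$ with $\overline{G}(X) \cong Y$. Essential surjectivity of $F_{A_S}$ gives $Z \in \mathcal{C}(A_S)$ with $F_{A_S}(Z) \cong X$. Then
\begin{equation*}
Y \cong \overline{G}(F_{A_S}(Z)) \cong F_{A_\Gamma}(G(Z)),
\end{equation*}
so $Y$ lies in the essential image of $F_{A_\Gamma}$. This completes the proof.

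I expect the main obstacle to be verifying that Bogdanic's equivalence $A_S\grstab \to A_\Gamma\grstab$ genuinely commutes with the grading shift $(1)$ (as opposed to commuting with it only after composing with some power of $\Omega$). If this is already recorded in Bogdanic's paper \cite{bogdanic2010graded} it can simply be cited; otherwise, one must unpack the definition of the tilting complex (which is built from direct summands of a graded projective generator, carrying their natural grading) and observe that tensoring with such a complex trivially commutes with $(1)$. Once this point is settled, the remainder of the argument is a purely formal transport of equivalences.
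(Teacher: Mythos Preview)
Your proof is correct and uses the same core ingredients as the paper: reduce to the star via Bogdanic's graded derived equivalence, then invoke the Nakayama result. The paper organizes the final step differently, however. Rather than constructing the commutative square and chasing it, the paper observes that the equivalence $G: A_\Gamma\grstab \to A_S\grstab$ (which commutes with $\Omega$ and $(1)$) transports the hypothesis of the One Morphism Rule (Lemma \ref{one morphism rule}) directly: for indecomposables $X, Y \in A_\Gamma\grstab$ one has $\Hom_{A_\Gamma\grstab}(X, \Omega^n Y(n)) \cong \Hom_{A_S\grstab}(GX, \Omega^n GY(n))$, which is nonzero for at most one $n$ since $A_S$ is Nakayama. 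Lemma \ref{one morphism rule} then applies to $A_\Gamma$ itself. This sidesteps the need to verify that your square commutes, which (while true) requires tracing through the pretriangulated-hull construction of $\overline{G}$ in Lemma \ref{gr to dgstable equivalence} and its compatibility with the identification of Corollary \ref{dgstable strong}. Your concern about whether $G$ commutes with $(1)$ is common to both arguments; the paper simply asserts it, relying on the fact that tensoring with a tilting complex of graded bimodules manifestly commutes with grading shift.
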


\begin{proof}
If $\Gamma$ is the star, then $A_\Gamma$ is a Nakayama algebra and the result follows immediately from Lemma \ref{uniserial}.  If $\Gamma$ is not the star, let $S$ denote the star with the same number of edges and multiplicity as $\Gamma$.  By Theorem \ref{Brauer tree gradings} there is a nonpositive grading on $A_S$ such that $soc(A_S)$ is in degree $-d$.  Then by Theorem \ref{grading transferance}, $D^b(A_\Gamma\grmod)$ and $D^b(A_S\grmod)$ are equivalent as triangulated categories.  By a theorem of Rickard \cite{rickard1989derived}, this induces a triangulated equivalence $G: A_\Gamma\grstab \rightarrow A_S\grstab$ which commutes with grading shifts.  By Proposition \ref{gr to dgstable equivalence}, $G$ induces an equivalence between $\mathcal{C}(A_\Gamma)$ and $\mathcal{C}(A_S)$.  Since $A_S$ is a Nakayama algebra, it satisfies the hypotheses of Lemma \ref{one morphism rule}, hence $A_\Gamma$ does as well.  Thus $F_{A_\Gamma}$ is an equivalence.
\end{proof}

\begin{corollary}
Let $\Gamma$ be a Brauer tree, and let $S$ be the star with the same multiplicity and number of edges.  Let $A_{\Gamma}$ and $A_S$ both be graded with socle in degree $-d \le 0$.  Then $A_\Gamma \dgstab$ and $A_S \dgstab$ are equivalent as triangulated categories.
\end{corollary}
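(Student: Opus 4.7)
The plan is to deduce this corollary directly from the machinery already developed, since Theorem \ref{Brauer tree surjectivity} and its proof have essentially done the work. The key observation is that Lemma \ref{gr to dgstable equivalence} reduces the problem to producing a triangulated equivalence $A_\Gamma\grstab \to A_S\grstab$ that commutes with the grading shift functor $(1)$.

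First, I would invoke Corollary \ref{Brauer tree stable equivalence} (or equivalently Theorem \ref{grading transferance} combined with Rickard's theorem) to obtain such an equivalence. More precisely, since $A_\Gamma$ and $A_S$ have been given compatible gradings with socles in the same degree $-d$, Theorem \ref{grading transferance} supplies a triangulated equivalence $D^b(A_S\grmod) \to D^b(A_\Gamma\grmod)$. By Rickard's theorem, this descends to a triangulated equivalence of the Verdier quotients by the perfect subcategories, hence to a triangulated equivalence $G: A_S\grstab \to A_\Gamma\grstab$. The equivalence arising from Bogdanic's construction is given by tensoring with a graded tilting complex, so $G$ commutes with the grading shift $(1)$.

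Next, I would apply Lemma \ref{gr to dgstable equivalence}: since $G$ is a triangulated equivalence commuting with $(1)$, it induces an equivalence of triangulated categories $\overline{G}: A_S\dgstab \to A_\Gamma\dgstab$. This gives the desired equivalence, and the proof is complete.

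The main (indeed only) subtlety is verifying that the equivalence $G$ produced from Bogdanic's work truly commutes with the grading shift, i.e.\ that the tilting complex witnessing the derived equivalence is built from graded bimodules in a shift-compatible way. This is implicit in Theorem \ref{grading transferance} as stated, since the cited equivalence is an equivalence of \emph{graded} derived categories, and graded Morita-type equivalences automatically intertwine $(1)$. Once this is noted, the corollary is immediate from Lemma \ref{gr to dgstable equivalence}.
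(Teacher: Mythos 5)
Your proposal is correct and follows essentially the same route as the paper: the paper's proof of this corollary simply cites the use of Lemma \ref{gr to dgstable equivalence} in the proof of Theorem \ref{Brauer tree surjectivity}, where the shift-compatible triangulated equivalence $G$ between the graded stable categories is obtained exactly as you describe, via Theorem \ref{grading transferance} and Rickard's theorem. Your remark on checking that $G$ commutes with the grading shift is the same point the paper makes when asserting that the induced equivalence commutes with grading shifts.
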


\begin{proof}
This follows from the use of Proposition \ref{gr to dgstable equivalence} in the previous theorem.
\end{proof}


\section{The Dg-Stable Category of the Star with $n$ Vertices}
\label{The Dg-Stable Category of the Star with n Vertices}
For $n \ge 2, d\ge 0$, let $A = A_{n, d}$ denote the graded Brauer tree algebra, with socle in degree $-d$, corresponding to the star $S$ with $n$ edges and exceptional vertex of multiplicity one.  This specifies $A$ up to graded Morita equivalence; we will choose a specific grading once we have adopted some more notation in the section below.  By the results of Section \ref{Brauer Tree Algebras}, $A\dgstab$ is equivalent to $\mathcal{C}(A)$.  We shall identify the two categories throughout this section.


\subsection{Notation, Indexing, and Grading}
We index the edges of $S$ by the set $\mathbb{Z}/n\mathbb{Z} = \{\overline{1}, \cdots, \overline{n}\}$, according to their cyclic order around the center vertex.  We define a total order $\le$ on $\mathbb{Z}/n\mathbb{Z}$ by $\overline{1} < \overline{2} < \cdots < \overline{n}$.  This order is of course not compatible with the group operation on $\mathbb{Z}/n\mathbb{Z}$.

If $P$ is a statement with a truth value, we define $\delta_P$ to be $1$ if $P$ is true and $0$ if $P$ is false.

For $x, y \in \mathbb{Z}$, define $\langle x , y \rangle$ to be the closed arc of the unit circle starting at $e^{\frac{2\pi\sqrt{-1}}{n}x}$ and proceeding counterclockwise to $e^{\frac{2\pi\sqrt{-1}}{n}y}$.  Thus $\langle x, x \rangle$ denotes a point, rather than the full circle.

With these definitions, the Ext-quiver of $A$ is a directed cycle, $C$, of length $n$.  $C$ has vertices $e_{\overline{i}}$ and edges
$\begin{tikzcd}
e_{\overline{i}} \arrow[r, "a_{\overline{i}}"] & e_{\overline{i}+\overline{1}}
\end{tikzcd}$
for all $\overline{i}\in \mathbb{Z}/n\mathbb{Z}$.  $A$ is isomorphic to, and will be identified with, the quotient of the path algebra of $C$ by the ideal generated by paths of length $n+1$.  Changing $A$ up to graded Morita equivalence, we determine the grading on $A$ by defining $deg(a_{\overline{i}}) = -d \delta_{\overline{i}=\overline{n}}$.  We denote by $S_{\overline{i}}$ the simple $A$-module corresponding to $e_{\overline{i}}$, in degree 0.  We denote by $P_{\overline{i}}$ the indecomposable projective module with head $S_{\overline{i}}$ and socle $S_{\overline{i}}(d)$.

The indecomposable $A$-modules are uniserial and determined, up to isomorphism, by their head and socle.  For $\overline{i}, \overline{j} \in \mathbb{Z}/n\mathbb{Z}$, let $M^{\overline{i}}_{\overline{j}}$ denote the indecomposable module with head $S_{\overline{i}}$ and socle $S_{\overline{j}}(d \delta_{\overline{j} < \overline{i}})$.  More specifically, for $1 \le i, j \le n$, we define $M^{\overline{i}}_{\overline{j}}$ to be the module $e_{\overline{i}}A/ e_{\overline{i}}J^l$, where $J$ is the Jacobson radical of $A$ and $l = \delta_{i>j}n + 1 +j - i$ is the length of $M^{\overline{i}}_{\overline{j}}$.  The non-projective indecomposable objects of $A\grmod$, up to grading shifts and isomorphism, are precisely $M^{\overline{i}}_{\overline{j}}$ for $\overline{i}, \overline{j} \in \mathbb{Z}/n\mathbb{Z}$.

Even when working in $A\grstab$, it will be helpful to define the "length" of $M^{\overline{i}}_{\overline{j}}$, for $1 \le i, j \le n$, to be $l(M^{\overline{i}}_{\overline{j}}) = \delta_{i>j}n + 1 +j - i$.

Finally, we note that for $1 \le r, j \le n$, the module $M^{\overline{j}+\overline{1}-\overline{r}}_{\overline{j}}(-d\delta_{\overline{j}\neq \overline{r}})$ has length $r$ and socle $S_j$ in degree zero; we shall make extensive use of this module later on.


\subsection{Structure of $A\grstab$}

One of the desirable features of Brauer tree algebras is that the $A$-module homomorphisms $X\rightarrow Y$ can be determined combinatorially from the composition towers of $X$ and $Y$, allowing quick and easy computation of morphisms.  For a more general and explicit description of this procedure, we refer to Crawley-Boevey \cite{crawley1989maps}.  These techniques generalize easily to graded modules.

The following results about $A\grstab$ follow from straightforward computation and are well-known.  We state them without proof.

\begin{proposition}
\label{grstable indecomposables}
The (distinct) indecomposable objects of $A\grstab$ are precisely $M^{\overline{i}}_{\overline{j}} (k)$, for any $\overline{i}, \overline{j} \in \mathbb{Z}/n\mathbb{Z}, k \in \mathbb{Z}$.
\end{proposition}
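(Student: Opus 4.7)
The plan is to classify the indecomposable graded $A$-modules directly, and then transfer the classification to $A\grstab$ by using that the projectives die while all other indecomposables remain indecomposable and pairwise non-isomorphic.

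For the classification in $A\grmod$, I would use that $A$ is a cyclic Nakayama algebra whose Jacobson radical is homogeneous (by \cite{kelarev1992jacobson}, as cited earlier). This forces every indecomposable graded $A$-module to be graded-uniserial, so it is determined up to isomorphism by its head $S_{\overline{i}}$, the degree $-k$ in which the head sits, and its Loewy length $\ell \in \{1,\ldots,n+1\}$; the arrows $a_{\overline{r}}$ have fixed prescribed degrees ($0$ unless $\overline{r} = \overline{n}$, where the degree is $-d$), so the whole graded composition series is forced by these data. The case $\ell = n+1$ produces the projectives $P_{\overline{i}}(k)$, while for $1 \le \ell \le n$ one obtains exactly the module $M^{\overline{i}}_{\overline{j}}(k)$ with $\overline{j}$ characterized by $\ell = \delta_{i>j}n + 1 + j - i$.

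To pass to $A\grstab$, I would observe that the projectives become zero, so by Krull--Schmidt (applied to $A\grstab$, as noted in Section~\ref{module definitions}) every indecomposable of $A\grstab$ lifts to some $M^{\overline{i}}_{\overline{j}}(k)$. The non-redundancy reduces to the standard principle that two indecomposable non-projective graded modules isomorphic in $A\grstab$ are isomorphic in $A\grmod$: since each $M^{\overline{i}}_{\overline{j}}(k)$ is non-projective, the identity does not factor through a projective (else the module would be a summand of that projective), hence the two-sided ideal $J_X \subset \End_{A\grmod}(X)$ of morphisms factoring through projectives is proper and lies in the unique maximal ideal of the local ring $\End_{A\grmod}(X)$; any lift of an isomorphism in $A\grstab$ is then a unit in $\End_{A\grmod}(X)$. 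Finally, isomorphic $M^{\overline{i}}_{\overline{j}}(k)$'s in $A\grmod$ must have matching head and socle together with matching degrees, which recovers the triple $(\overline{i}, \overline{j}, k)$.

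There is no real obstacle here; the result is essentially bookkeeping on top of the uniserial structure guaranteed by the Nakayama hypothesis. The only subtlety is the lifting of stable-category isomorphisms back to $A\grmod$, but this is a general feature of self-injective algebras and is insensitive to the grading.
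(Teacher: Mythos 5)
The paper states this proposition without proof, describing it as well-known and following from straightforward computation; your argument is precisely the standard computation being alluded to, and it is correct. The only step you state a bit quickly is that every indecomposable graded $A$-module is graded-uniserial: this needs either the graded analogue of the Nakayama structure theorem (graded projective covers exist because $rad(A)$ is homogeneous, so the usual induction on length runs verbatim with graded submodules), or the Gordon--Green facts that an indecomposable graded module over a graded Artin algebra remains indecomposable after forgetting the grading and that a gradable indecomposable admits a unique grading up to shift and isomorphism. Your reduction from $A\grstab$ to $A\grmod$ --- projectives vanish, non-projective indecomposables keep local endomorphism rings, and stable isomorphisms between them lift --- is the standard Krull--Schmidt argument and is handled correctly, as is the final identification of the parameters $(\overline{i}, \overline{j}, k)$ from the graded head and length.
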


\begin{proposition}
\label{grstable Hom formulas}
~\\
$dim\Hom_{A\grstab}(M^{\overline{a}}_{\overline{b}}, M^{\overline{i}}_{\overline{j}}(k)) =
\begin{cases}
1 & \text{ if } \langle a, j \rangle \subset \langle i, b \rangle \text{ and } k = -d\delta_{\overline{a}< \overline{i}}\\
0 & \text{ otherwise }
\end{cases}$
\end{proposition}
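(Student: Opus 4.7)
The plan is to compute $\Hom_{A\grmod}(M^{\overline{a}}_{\overline{b}}, M^{\overline{i}}_{\overline{j}}(k))$ directly from the uniserial structure of the indecomposables, and then verify that the unique nonzero morphism does not factor through any projective, so it persists to $A\grstab$.

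For the module-category computation, any nonzero $f \colon M^{\overline{a}}_{\overline{b}} \to M^{\overline{i}}_{\overline{j}}(k)$ factors as a surjection onto $\operatorname{im}(f)$ followed by an inclusion. Since all indecomposables here are uniserial, so is $\operatorname{im}(f)$: it is at once a quotient of $M^{\overline{a}}_{\overline{b}}$, hence of the form $M^{\overline{a}}_{\overline{c}}$ with $\overline{c}$ on the arc from $\overline{a}$ to $\overline{b}$, and a submodule of $M^{\overline{i}}_{\overline{j}}(k)$, hence of the form $M^{\overline{c}'}_{\overline{j}}$ with $\overline{c}'$ on the arc from $\overline{i}$ to $\overline{j}$. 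Matching forces $\overline{c} = \overline{j}$ and $\overline{c}' = \overline{a}$, realizable precisely when $\langle a, j \rangle \subset \langle i, b \rangle$; in that case $\operatorname{im}(f) = M^{\overline{a}}_{\overline{j}}$ and $f$ is determined up to a nonzero scalar. The grading condition then forces $k = -d\delta_{\overline{a}<\overline{i}}$: the head $S_{\overline{a}}$ of $\operatorname{im}(f)$ sits in degree $0$ viewed as a quotient of $M^{\overline{a}}_{\overline{b}}$, but in degree $-k - d\delta_{\overline{a}<\overline{i}}$ viewed inside $M^{\overline{i}}_{\overline{j}}(k)$, since the arrow $a_{\overline{n}}$ is the unique one of nonzero degree and is crossed going from $S_{\overline{i}}$ down to $S_{\overline{a}}$ exactly when $\overline{a} < \overline{i}$.

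The main obstacle is the second stage: ruling out factorizations $M^{\overline{a}}_{\overline{b}} \xrightarrow{g} P_{\overline{r}}(t) \xrightarrow{h} M^{\overline{i}}_{\overline{j}}(k)$ through indecomposable projectives (which suffices by Krull--Schmidt). If nonzero, $\operatorname{im}(h)$ is a uniserial quotient of $P_{\overline{r}}(t)$ embedding in $M^{\overline{i}}_{\overline{j}}(k)$, so it has head $S_{\overline{r}}$ and equals $M^{\overline{r}}_{\overline{j}}$ up to grading shift; for $\operatorname{im}(h\circ g)$ to contain $M^{\overline{a}}_{\overline{j}}$, the element $\overline{r}$ must lie on the sub-arc $\langle i, a \rangle$. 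Then $\operatorname{im}(g)$ is forced to be $J^s P_{\overline{r}}(t)$ (the unique submodule with head $S_{\overline{a}}$) with $s = (a - r) \bmod n$ and length $n+1-s$, so the surjection $M^{\overline{a}}_{\overline{b}} \twoheadrightarrow \operatorname{im}(g)$ demands $l(M^{\overline{a}}_{\overline{b}}) \ge n+1-s$. The most favorable case $r = i$ yields the inequality $l(M^{\overline{a}}_{\overline{b}}) + l(M^{\overline{i}}_{\overline{a}}) \ge n+2$; but because $\overline{a}$ lies on $\langle i, b \rangle$ the left side equals $l(M^{\overline{i}}_{\overline{b}}) + 1 \le n+1$, a contradiction. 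All smaller values of $s$ (i.e., $r$ strictly between $i$ and $a$) make the inequality even harder to satisfy, so no factorization exists. Combining both stages delivers the claimed formula.
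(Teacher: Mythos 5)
Your first stage is where the argument breaks. The existence of the common section $M^{\overline{a}}_{\overline{j}}$ --- a quotient of $M^{\overline{a}}_{\overline{b}}$ and a submodule of $M^{\overline{i}}_{\overline{j}}(k)$ --- is equivalent to ``$\overline{j}$ lies on $\langle a,b\rangle$ and $\overline{a}$ lies on $\langle i,j\rangle$,'' which is strictly weaker than $\langle a,j\rangle \subset \langle i,b\rangle$: the containment additionally forces $\overline{a} \in \langle i,b\rangle$, i.e.\ that the arcs $\langle i,a\rangle$, $\langle a,j\rangle$, $\langle j,b\rangle$ concatenate to $\langle i,b\rangle$ without wrapping once more around the circle. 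Concretely, take $n=4$, $a=1$, $b=3$, $i=3$, $j=2$: then $M^{\overline{1}}_{\overline{2}}$ is a quotient of $M^{\overline{1}}_{\overline{3}}$ and a submodule of $M^{\overline{3}}_{\overline{2}}$, so $\Hom_{A\grmod}(M^{\overline{1}}_{\overline{3}}, M^{\overline{3}}_{\overline{2}}) \neq 0$ (and the degree works out to $k=0=-d\delta_{\overline{1}<\overline{3}}$... no, to $-d$; in any case some $k$ works), yet $\langle 1,2\rangle \not\subset \langle 3,3\rangle$, and the map is stably zero because it factors as $M^{\overline{1}}_{\overline{3}} \xrightarrow{\sim} J^2P_{\overline{3}}(t) \hookrightarrow P_{\overline{3}}(t) \twoheadrightarrow M^{\overline{3}}_{\overline{2}}(k)$. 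So stage 1 does not establish necessity of the arc condition, and stage 2, read literally, would prove that no nonzero map between these uniserials ever factors through a projective --- i.e.\ that $\Hom_{A\grstab}$ always equals $\Hom_{A\grmod}$ --- which is false and contradicts the proposition itself.

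The error surfaces in stage 2 at the phrase ``because $\overline{a}$ lies on $\langle i,b\rangle$ the left side equals $l(M^{\overline{i}}_{\overline{b}})+1$'': that $\overline{a}\in\langle i,b\rangle$ is precisely the part of the arc-containment condition you have not yet justified. When $\overline{a}\notin\langle i,b\rangle$ one instead has $l(M^{\overline{a}}_{\overline{b}})+l(M^{\overline{i}}_{\overline{a}}) = l(M^{\overline{i}}_{\overline{b}})+n+1 \ge n+2$, your inequality is satisfied, and the factorization through $P_{\overline{i}}$ genuinely exists (in the example above it is an equality, $3+3=6$). The repair is to run your stage-2 machinery honestly in both regimes: assuming only the module-level conditions, the inequality $l(M^{\overline{a}}_{\overline{b}}) \ge n+1-s$ is least restrictive at $r=i$, where it holds if and only if $\overline{a}\notin\langle i,b\rangle$; in that case the composite through $P_{\overline{i}}$ is a nonzero multiple of the canonical map (by one-dimensionality of $\Hom_{A\grmod}$), so the stable Hom vanishes, while if $\overline{a}\in\langle i,b\rangle$ no factorization exists and the stable Hom is one-dimensional. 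Together with the corrected stage 1 this gives exactly the stated dichotomy. Two smaller points: when $r=a$ the submodule of $P_{\overline{r}}$ with head $S_{\overline{a}}$ is not unique (the socle is a second one), but a map out of $P_{\overline{a}}$ that is nonzero on its socle must be injective, which is impossible into a module of length at most $n$; and you should say explicitly that each summand $h_ig_i$ of a projective factorization is, if nonzero, a scalar multiple of the canonical map, so killing each summand kills the sum.
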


We shall refer to the statement $\langle a, j \rangle \subset \langle i , b \rangle$ as the \textbf{arc containment condition}.

For describing composition, it will be helpful to choose a collection of generators for the above Hom spaces.  Fortunately, there are natural choices.

\begin{definition}
Let $1 \le a,b, i, j \le n$, and let $l$ be the length of $M^{\overline{i}}_{\overline{j}}$.

For $\overline{i} \neq \overline{j}$, define the \textbf{canonical surjection}
\begin{eqnarray*}
\begin{tikzcd}
p^{\overline{i}}_{\overline{j}}: M^{\overline{i}}_{\overline{j}} = e_{\overline{i}}A/e_{\overline{i}}J^l  \arrow[r, two heads] & e_{\overline{i}}A/e_{\overline{i}}J^{l-1} = M^{\overline{i}}_{\overline{j}-\overline{1}}\\
e_{\overline{i}} \arrow[r, mapsto] & e_{\overline{i}}
\end{tikzcd}
\end{eqnarray*}

For $\overline{i} \neq \overline{j} + \overline{1}$, define the \textbf{canonical injection}
\begin{eqnarray*}
\begin{tikzcd}
\iota^{\overline{i}}_{\overline{j}}: M^{\overline{i}}_{\overline{j}} = e_{\overline{i}}A/ e_{\overline{i}}J^l \arrow[r, "\sim"] & (e_{\overline{i}-\overline{1}}J/ e_{\overline{i}-\overline{1}}J^{l+1})(-d\delta_{\overline{i} = \overline{1}}) \arrow[r, hook] & M^{\overline{i}-\overline{1}}_{\overline{j}}(-d\delta_{\overline{i} = \overline{1}})\\
e_{\overline{i}} \arrow[r, mapsto] & e_{\overline{i}-\overline{1}}a_{\overline{i}-\overline{1}} &
\end{tikzcd}
\end{eqnarray*}

For $<a, j> \subset <i, b>$, define the \textbf{canonical map} $\alpha^{\overline{a}, \overline{i}}_{\overline{b}, \overline{j}}: M^{\overline{a}}_{\overline{b}} \rightarrow M^{\overline{i}}_{\overline{j}}(-d\delta_{\overline{a}< \overline{i}})$ by
\begin{equation*}
\alpha^{\overline{a}, \overline{i}}_{\overline{b}, \overline{j}} = \iota^{\overline{i} + \overline{1}}_{\overline{j}}(-d\delta_{\overline{i}+\overline{1}> \overline{a}}) \cdots \iota^{\overline{a} - \overline{1}}_{\overline{j}}(-d\delta_{\overline{a} -\overline{1} > \overline{a}}) \circ \iota^{\overline{a}}_{\overline{j}} \circ p^{\overline{a}}_{\overline{j}+\overline{1}} \cdots p^{\overline{a}}_{\overline{b}}
\end{equation*}
\end{definition}

Note, in particular, that $\alpha^{\overline{a}, \overline{a}}_{\overline{b}, \overline{b}}$ is the identity map.

\begin{proposition}
\label{grstable composition formulas}
The indecomposable maps in $A\grstab$ are precisely the canonical surjections and injections.  Composition in $A\grstab$ is given by the formula:\\
$\alpha^{\overline{c}, \overline{e}}_{\overline{d}, \overline{f}}(-d\delta_{\overline{a} < \overline{c}})\circ \alpha^{\overline{a}, \overline{c}}_{\overline{b}, \overline{d}} =
\begin{cases}
\alpha^{\overline{a}, \overline{e}}_{\overline{b}, \overline{f}} & \text{ if } \langle a, f \rangle \subset \langle e, b \rangle\\
0 & \text{ otherwise }
\end{cases}$
\end{proposition}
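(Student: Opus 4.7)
The plan is to establish the two parts of the proposition separately, carrying out all computations in $A\grmod$ and then descending to $A\grstab$ via Proposition \ref{grstable Hom formulas}.

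For the claim about indecomposable maps, I would first observe that each $\iota^{\overline{i}}_{\overline{j}}$ and $p^{\overline{i}}_{\overline{j}}$ is by construction a nonzero morphism between indecomposable graded modules whose lengths differ by exactly one, and that the corresponding Hom space in $A\grstab$ is one-dimensional by Proposition \ref{grstable Hom formulas}, so each remains nonzero modulo projectives. The recursive definition of $\alpha^{\overline{a}, \overline{i}}_{\overline{b}, \overline{j}}$ exhibits every generator of a Hom space as a composition of canonical injections and surjections, so together they generate all morphisms. That no $\iota$ or $p$ admits a nontrivial factorization through a third indecomposable follows from a length argument: any strictly intermediate indecomposable $N$ would have to simultaneously admit a nonzero map from the source and to the target, and unwinding the arc-containment conditions of Proposition \ref{grstable Hom formulas} forces $N$ to agree with the source or the target up to isomorphism and grading shift.

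For the composition formula, I would compute both sides on the generator $e_{\overline{a}} \in M^{\overline{a}}_{\overline{b}}$. Unwinding the iterated definition of $\alpha$, the inner map sends $e_{\overline{a}}$ to $e_{\overline{c}} a_{\overline{c}} a_{\overline{c}+\overline{1}} \cdots a_{\overline{a}-\overline{1}} \in M^{\overline{c}}_{\overline{d}}$, while the outer map sends $e_{\overline{c}}$ to $e_{\overline{e}} a_{\overline{e}} \cdots a_{\overline{c}-\overline{1}} \in M^{\overline{e}}_{\overline{f}}$. By $A$-linearity, the composite therefore sends $e_{\overline{a}}$ to the concatenated path $e_{\overline{e}} a_{\overline{e}} \cdots a_{\overline{a}-\overline{1}}$. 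When $\langle a, f \rangle \subset \langle e, b \rangle$, this path has length less than $l(M^{\overline{e}}_{\overline{f}})$ and coincides verbatim with $\alpha^{\overline{a}, \overline{e}}_{\overline{b}, \overline{f}}(e_{\overline{a}})$, so the two morphisms agree in $A\grmod$, hence in $A\grstab$. When arc containment fails, either the concatenated path is too long to fit inside $M^{\overline{e}}_{\overline{f}}$ and the composite vanishes already in $A\grmod$, or the composite is nonzero in $A\grmod$ but factors through the injective hull of $M^{\overline{a}}_{\overline{b}}$, which is projective by self-injectivity; in this second case I would identify the factorization explicitly by noting that the image of $e_{\overline{a}}$ generates a submodule of $M^{\overline{e}}_{\overline{f}}$ that lifts through such a hull precisely because the required path does not fit within the arc $\langle e, b \rangle$.

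The main obstacle I anticipate is the compatibility of the grading shifts. The codomain of the computed composite carries shift $-d(\delta_{\overline{a}<\overline{c}} + \delta_{\overline{c}<\overline{e}})$, whereas $\alpha^{\overline{a}, \overline{e}}_{\overline{b}, \overline{f}}$ lives in a codomain with shift $-d\delta_{\overline{a}<\overline{e}}$. A cyclic case analysis on the relative positions of $\overline{a}, \overline{c}, \overline{e}$ in the totally ordered set $\{\overline{1}, \ldots, \overline{n}\}$ will show that the three arc containment conditions together exclude precisely those configurations, such as $\overline{a} < \overline{c} < \overline{e}$, in which this $\delta$-identity would fail, so that whenever the composition is claimed to be nonzero the shifts automatically match. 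Carrying out this case analysis cleanly, and keeping separate the two sources of vanishing (in $A\grmod$ versus only in $A\grstab$), is the most delicate part of the argument.
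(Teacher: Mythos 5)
The paper states this proposition without proof, labeling it ``well-known'' and a matter of straightforward computation (with a pointer to Crawley-Boevey for the general calculus of maps between uniserial modules), so there is no argument of the paper's to compare yours against. Your plan --- compute both sides on the generator $e_{\overline{a}}$, identify the composite with a concatenated path, and descend to $A\grstab$ --- is exactly the intended computation, and it does work.

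Three points deserve tightening. First, the ``otherwise $0$'' clause needs no explicit factorization through a projective: the composite lies in $\Hom_{A\grstab}(M^{\overline{a}}_{\overline{b}}, M^{\overline{e}}_{\overline{f}}(k))$ for some $k$, and when $\langle a,f\rangle \not\subset \langle e,b\rangle$ this space vanishes for \emph{every} $k$ by Proposition \ref{grstable Hom formulas}, so the composite is stably zero whether or not it already dies in $A\grmod$; your proposed identification of the lift through the injective hull is correct but redundant. Second, the grading bookkeeping is lighter than you fear. Writing $[x\to y]\in\{0,\dots,n-1\}$ for the counterclockwise distance, the two hypotheses $\langle a,d\rangle\subset\langle c,b\rangle$ and $\langle c,f\rangle\subset\langle e,d\rangle$ already give $[c\to a]\le[c\to d]$ and $[e\to c]+[c\to d]=[e\to d]<n$, whence $[e\to c]+[c\to a]=[e\to a]<n$: the concatenated path is the short path from $\overline{e}$ to $\overline{a}$, never the one wrapping an extra turn. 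Since the degree of a path is $-d$ times the number of traversals of $a_{\overline{n}}$, the identity $\delta_{\overline{a}<\overline{c}}+\delta_{\overline{c}<\overline{e}}=\delta_{\overline{a}<\overline{e}}$ follows automatically whenever both factors are defined; no case analysis on the cyclic positions and no appeal to the third arc condition is needed, and the third condition then governs only whether the short path survives in the quotient $M^{\overline{e}}_{\overline{f}}$ and in $A\grstab$. Third, inferring that $p^{\overline{i}}_{\overline{j}}$ and $\iota^{\overline{i}}_{\overline{j}}$ are nonzero in $A\grstab$ from the one-dimensionality of the \emph{stable} Hom space is incomplete as stated: a priori the canonical map could die in the quotient while another map spans the stable Hom. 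This is repaired by noting that the graded-module Hom space is itself at most one-dimensional (the image of $e_{\overline{i}}$ must be a path of length $0$, resp.\ $1$; the alternatives of length $n$, resp.\ $n+1$, are killed by the codomain since $l(M^{\overline{e}}_{\overline{f}})\le n$), so the surjection onto the one-dimensional stable Hom is an isomorphism. The same observation closes your irreducibility argument, together with the remark that $\End_{A\grstab}(M^{\overline{i}}_{\overline{j}})=k\cdot\mathrm{id}$ by Proposition \ref{grstable Hom formulas}, so that once the intermediate object is forced to be the source or target, the corresponding factor is either zero or an isomorphism.
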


\begin{proposition}
\label{grstable Omega formulas}
In $A\grstab$, the following formulas hold:
\begin{align}
\label{gOf1} &\Omega(M^{\overline{i}}_{\overline{j}}) = M^{\overline{j}+\overline{1}}_{\overline{i}}(d\delta_{\overline{j}+\overline{1}\le \overline{i}}) & \\
\label{gOf2} &\Omega^{-1}(M^{\overline{i}}_{\overline{j}}) = M^{\overline{j}}_{\overline{i}-\overline{1}}(-d\delta_{\overline{i}\le \overline{j}}) & \\
\label{gOf3} &\Omega^{2k}(M^{\overline{i}}_{\overline{j}}) = M^{\overline{i}+\overline{k}}_{\overline{j}+\overline{k}}(d(k + \delta_{\overline{n}+\overline{1}-\overline{k} \le \overline{i}})) & \text{ for } 1 \le k \le n.\\
\label{gOf4} &\Omega^{2k-1}(M^{\overline{i}}_{\overline{j}}) = M^{\overline{j}+\overline{k}}_{\overline{i}+\overline{k}-\overline{1}}(d(k + \delta_{\overline{n}+\overline{1}-\overline{k} \le \overline{i}} - \delta_{\overline{i}+\overline{k} \le \overline{j}+\overline{k}})) & \text{ for } 1 \le k \le n.\\
\label{gOf5} &\Omega^{-2k}(M^{\overline{i}}_{\overline{j}}) = M^{\overline{i}-\overline{k}}_{\overline{j}-\overline{k}}(-d(k + \delta_{\overline{i} \le \overline{k}})) & \text{ for } 1 \le k \le n.\\
\label{gOf6} &\Omega^{-2k+1}(M^{\overline{i}}_{\overline{j}}) = M^{\overline{j}-\overline{k}+1}_{\overline{i}-\overline{k}}(-d(k + \delta_{\overline{i}\le \overline{k}} - \delta_{\overline{j} - \overline{k} + \overline{1} \le \overline{i} - \overline{k}})) & \text{ for } 1 \le k \le n.
\end{align}
Analogous formulas hold for the $\alpha^{\overline{a}, \overline{i}}_{\overline{b}, \overline{j}}$.
\end{proposition}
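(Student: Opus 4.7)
The plan is to establish formula (\ref{gOf1}) by direct computation, derive (\ref{gOf2}) as its inverse, and then prove (\ref{gOf3})--(\ref{gOf6}) by induction on $k$, with the main work being careful bookkeeping of grading shifts under the total order $\le$ on $\mathbb{Z}/n\mathbb{Z}$.

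First I would verify (\ref{gOf1}) directly. The projective cover of $M^{\overline{i}}_{\overline{j}}$ is $P_{\overline{i}} = e_{\overline{i}}A$, and by definition $M^{\overline{i}}_{\overline{j}} = P_{\overline{i}}/e_{\overline{i}}J^{l}$ where $l = \delta_{i>j}n + 1 + j - i$. Hence $\Omega M^{\overline{i}}_{\overline{j}} \cong e_{\overline{i}}J^{l}$, which has head $S_{\overline{j}+\overline{1}}$ and socle $S_{\overline{i}}$ and length $n+1-l$; this identifies the underlying ungraded module with $M^{\overline{j}+\overline{1}}_{\overline{i}}$. The only remaining point is the grading: walking from $e_{\overline{i}}$ along the arrows $a_{\overline{i}}, a_{\overline{i}+\overline{1}}, \ldots, a_{\overline{i}+\overline{l}-\overline{1}}$ to reach the head of the kernel picks up a degree of $-d$ exactly when the path traverses the arrow $a_{\overline{n}}$, which happens precisely when $\overline{j}+\overline{1} \le \overline{i}$ in the total order. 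This accounts for the indicator $\delta_{\overline{j}+\overline{1}\le \overline{i}}$.

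Next, (\ref{gOf2}) can be obtained either dually, by identifying $\Omega^{-1}M^{\overline{i}}_{\overline{j}}$ with the cokernel of the injective hull (using that $I_{\overline{i}}$ has head $S_{\overline{i}-\overline{1}}(-d\delta_{\overline{i}=\overline{1}})$ and socle $S_{\overline{i}}$), or more economically by substituting the purported right-hand side of (\ref{gOf2}) into (\ref{gOf1}) and checking that both the indices and the grading shift return to $M^{\overline{i}}_{\overline{j}}$ after a small case analysis.

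Formulas (\ref{gOf3})--(\ref{gOf6}) then follow by induction on $k$, applied in tandem: (\ref{gOf3}) and (\ref{gOf4}) are linked by a single application of (\ref{gOf1}), and similarly (\ref{gOf5}) and (\ref{gOf6}) are linked by (\ref{gOf2}). Each inductive step reduces to verifying a numerical identity between a newly acquired indicator (produced by (\ref{gOf1}) or (\ref{gOf2})) and the difference between the $k$ and $k+1$ versions of the correction term $\delta_{\overline{n}+\overline{1}-\overline{k}\le \overline{i}}$ (or its mirror for the negative Heller translates).

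The main obstacle is the combinatorial bookkeeping: the total order on $\mathbb{Z}/n\mathbb{Z}$ is not compatible with the additive structure, so every indicator $\delta_{\overline{a}\le \overline{b}}$ branches on whether a sum of representatives wraps past $\overline{n}$. A systematic way to manage this is to fix representatives in $\{1,\ldots,n\}$ and rewrite $\overline{i}+\overline{k}$ explicitly as $i+k$ or $i+k-n$ according to whether $i+k\le n$, reducing each $\delta$ to an inequality in $\mathbb{Z}$. The identities needed in the inductive step are then routine, but they must be split into the four subcases corresponding to whether $i$ and $j$ each fall in the "pre-wrap" or "post-wrap" ranges relative to $k$. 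This is where most of the labor lies, but none of it is conceptually difficult; for this reason it is appropriate to state the proposition and leave the case analysis to the reader, as the author does.
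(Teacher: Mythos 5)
Your proposal is correct; note that the paper gives no proof at all for this proposition (it is stated as "well-known" and "following from straightforward computation"), and your outline is exactly that straightforward computation. Identifying $\Omega M^{\overline{i}}_{\overline{j}}$ with $e_{\overline{i}}J^{l}$, reading off the degree of its generator from whether the length-$l$ path out of $e_{\overline{i}}$ crosses $a_{\overline{n}}$, inverting for (\ref{gOf2}), and then inducting via single applications of (\ref{gOf1}) and (\ref{gOf2}) all checks out — e.g.\ the inductive step for (\ref{gOf3}) at $k=1$ reduces to the indicator identity $\delta_{\overline{i}+\overline{1}\le \overline{j}+\overline{1}} + \delta_{\overline{j}+\overline{1}\le \overline{i}} = 1 + \delta_{\overline{n}\le \overline{i}}$, which holds after the wrap-around case split you describe.
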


\begin{proposition}
\label{grstable triangle formulas}
$\alpha^{\overline{a}, \overline{i}}_{\overline{b}, \overline{j}}: M^{\overline{a}}_{\overline{b}} \rightarrow M^{\overline{i}}_{\overline{j}}(-d\delta_{\overline{a}< \overline{i}})$ can be completed into the exact triangle:\\
\begin{eqnarray*}
\begin{tikzcd}
M^{\overline{a}}_{\overline{b}} \arrow[d, "\alpha^{\overline{a}, \overline{i}}_{\overline{b}, \overline{j}}"]\\
M^{\overline{i}}_{\overline{j}}(-d\delta_{\overline{a}< \overline{i}}) \arrow[d, "h_1"]\\
\delta_{\overline{a} \neq \overline{i}}M^{\overline{i}}_{\overline{a}-\overline{1}}(-d\delta_{\overline{a} < \overline{i}}) \oplus \delta_{\overline{b}\neq \overline{j}}M^{\overline{b}}_{\overline{j}}(-d\delta_{\overline{a} \le \overline{b}}) \arrow[d, "h_2"]\\
M^{\overline{b}}_{\overline{a}-\overline{1}}(-d\delta_{\overline{a}\le \overline{b}})
\end{tikzcd} 
\end{eqnarray*}
where $h_1 = \begin{pmatrix} \delta_{\overline{a} \neq \overline{i}}\alpha^{\overline{i}, \overline{i}}_{\overline{j}, \overline{a} - \overline{1}}(-d\delta_{\overline{a}< \overline{i}})\\ \delta_{\overline{b}\neq \overline{j}} \alpha^{\overline{i}, \overline{b}}_{\overline{j}, \overline{j}}(-d\delta_{\overline{a}< \overline{i}}) \end{pmatrix}$, and\\
$h_2 = \begin{pmatrix} \delta_{\overline{a} \neq \overline{i}}\alpha^{\overline{i}, \overline{b}}_{\overline{a} - \overline{1}, \overline{a} - \overline{1}}(-d\delta_{\overline{a}< \overline{i}}) & \delta_{\overline{b}\neq \overline{j}} \alpha^{\overline{b}, \overline{b}}_{\overline{j}, \overline{a} - \overline{1}}(-d\delta_{\overline{a} \le \overline{b}}) \end{pmatrix}$
\end{proposition}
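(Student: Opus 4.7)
The plan is to apply the octahedron axiom to the factorization $\alpha = \iota \circ p$ of $\alpha = \alpha^{\overline{a},\overline{i}}_{\overline{b},\overline{j}}$ through its image, identify each of $C(p)$ and $C(\iota)$ with one of the claimed summands, and then show via a Hom-vanishing argument that the octahedral triangle splits. By Proposition \ref{grstable composition formulas}, $p := \alpha^{\overline{a},\overline{a}}_{\overline{b},\overline{j}}: M^{\overline{a}}_{\overline{b}} \twoheadrightarrow M^{\overline{a}}_{\overline{j}}$ is a composition of canonical surjections and $\iota := \alpha^{\overline{a},\overline{i}}_{\overline{j},\overline{j}}: M^{\overline{a}}_{\overline{j}} \hookrightarrow M^{\overline{i}}_{\overline{j}}(-d\delta_{\overline{a}<\overline{i}})$ is a composition of canonical injections, and their composite recovers $\alpha$. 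Both are defined because the hypothesized arc containment $\langle a,j\rangle\subset\langle i,b\rangle$ implies the weaker containments $\langle a,j\rangle\subset\langle a,b\rangle$ and $\langle a,j\rangle\subset\langle i,j\rangle$.

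Each of $p$ and $\iota$ fits into a short exact sequence in $A\grmod$ whose kernel or cokernel can be read off directly from the uniserial structure: $\ker(p)$ has head $S_{\overline{j}+\overline{1}}$ and socle $S_{\overline{b}}$, while $\mathrm{coker}(\iota)$ has head $S_{\overline{i}}$ and socle $S_{\overline{a}-\overline{1}}$. Tracking the grading shifts inherited from the embeddings into $M^{\overline{a}}_{\overline{b}}$ and $M^{\overline{i}}_{\overline{j}}(-d\delta_{\overline{a}<\overline{i}})$ and applying formula \ref{gOf2} of Proposition \ref{grstable Omega formulas} to compute $\Omega^{-1}\ker(p)$, one obtains
\begin{equation*}
C(p) \cong M^{\overline{b}}_{\overline{j}}(-d\delta_{\overline{a}\le\overline{b}}),\qquad C(\iota) \cong M^{\overline{i}}_{\overline{a}-\overline{1}}(-d\delta_{\overline{a}<\overline{i}}),
\end{equation*}
the two summands in the proposition's middle term. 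The Kronecker deltas $\delta_{\overline{a}\neq\overline{i}}$ and $\delta_{\overline{b}\neq\overline{j}}$ in the statement encode the degenerate cases in which $p$ or $\iota$ is the identity and the corresponding summand vanishes.

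The octahedron axiom produces a distinguished triangle $C(p)\to C(\alpha)\to C(\iota)\xrightarrow{\partial}\Omega^{-1}C(p)$, and one must show $\partial = 0$. Since $\Omega^{-1}C(p)$ is, by formula \ref{gOf2}, a grading shift of $M^{\overline{j}}_{\overline{b}-\overline{1}}$, Proposition \ref{grstable Hom formulas} reduces the vanishing of $\partial$ to showing that the arc containment $\langle i,b-1\rangle\subset\langle j,a-1\rangle$ fails. This is a short check on the cyclic order: $\langle a,j\rangle\subset\langle i,b\rangle$ together with the nondegeneracy forces the (possibly coinciding) points $\overline{i},\overline{a},\overline{j},\overline{b}$ to sit in that counterclockwise order, so $\overline{a}$ lies on $\langle i,b-1\rangle$ but either $\overline{a}\notin\langle j,a-1\rangle$ in the generic subcase, or, when $\overline{a}=\overline{j}$, a small counterclockwise neighborhood of $\overline{a}$ lies inside $\langle i,b-1\rangle$ while also intersecting the open complement of $\langle a,a-1\rangle$. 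In either situation the containment fails and $\partial = 0$, giving $C(\alpha)\cong C(p)\oplus C(\iota)$.

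With the splitting in hand, the components of $h_1$ and $h_2$ inherited from the octahedron are morphisms between indecomposables whose Hom spaces are one-dimensional by Proposition \ref{grstable Hom formulas}; so, up to scalar, they must coincide with the canonical maps displayed in the statement, uniquely determined by matching heads, socles, and degree shifts. The main obstacle will be the careful grading-shift bookkeeping in the identifications of $C(p)$ and $C(\iota)$ and in the uniform treatment of the four combinations of degeneracy via the Kronecker-delta factors; once that is done, the arc-containment argument for $\partial$ is brief and the rest is routine.
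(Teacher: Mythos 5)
The paper states this proposition without proof (it appears in the list of ``well-known'' facts about $A\grstab$ that ``follow from straightforward computation''), so there is nothing to compare against; your octahedron route is a legitimate one, and its core is sound. The factorization $\alpha=\iota\circ p$ through the image, the identifications $C(p)\cong\Omega^{-1}\ker(p)\cong M^{\overline{b}}_{\overline{j}}(-d\delta_{\overline{a}\le\overline{b}})$ and $C(\iota)\cong\mathrm{coker}(\iota)\cong M^{\overline{i}}_{\overline{a}-\overline{1}}(-d\delta_{\overline{a}<\overline{i}})$, and the vanishing of the connecting map via failure of $\langle i,b-1\rangle\subset\langle j,a-1\rangle$ are all correct. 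Two small points: in the subcase $\overline{a}=\overline{j}$ it is a \emph{clockwise} neighborhood of $\overline{a}$ --- a piece of the open gap between $a-1$ and $a$ --- that lies in $\langle i,b-1\rangle$ but not in $\langle a,a-1\rangle$; and in the degenerate cases $\overline{a}=\overline{i}$ or $\overline{b}=\overline{j}$ one of the two cones is zero, so $\partial=0$ is automatic and the arc argument is only needed when both deltas equal $1$.

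The genuine gap is your last step. The octahedral axiom pins down exactly two of the four components of $(h_1,h_2)$: the composite $M^{\overline{i}}_{\overline{j}}(\ast)\to C(\alpha)\to C(\iota)$ must equal the cokernel projection of $\iota$, and the composite $C(p)\to C(\alpha)\to\Omega^{-1}M^{\overline{a}}_{\overline{b}}$ must equal $\Omega^{-1}$ of the inclusion $\ker(p)\hookrightarrow M^{\overline{a}}_{\overline{b}}$. The other two components are not determined by the axiom, and ``the Hom space is one-dimensional'' identifies each of them only up to a scalar that could a priori be zero; you argue neither nonvanishing nor the value of the scalar. Nonvanishing can be rescued (if, say, the $C(p)$-component of $h_1$ vanished, $C(h_1)$ would decompose as $\Omega^{-1}M^{\overline{a}}_{\overline{j}}(\ast)\oplus C(p)$, contradicting $C(h_1)\cong\Omega^{-1}M^{\overline{a}}_{\overline{b}}$ indecomposable), but the scalars cannot all be taken to be $1$: in the non-degenerate case the arc containment $\langle i,a-1\rangle\subset\langle b,j\rangle$ holds, so by Proposition \ref{grstable composition formulas} the two composites $\alpha^{\overline{i},\overline{b}}_{\overline{a}-\overline{1},\overline{a}-\overline{1}}\circ\alpha^{\overline{i},\overline{i}}_{\overline{j},\overline{a}-\overline{1}}$ and $\alpha^{\overline{b},\overline{b}}_{\overline{j},\overline{a}-\overline{1}}\circ\alpha^{\overline{i},\overline{b}}_{\overline{j},\overline{j}}$ are \emph{equal} to the same nonzero map $\alpha^{\overline{i},\overline{b}}_{\overline{j},\overline{a}-\overline{1}}$, whence the displayed $h_2\circ h_1$ equals $2\,\alpha^{\overline{i},\overline{b}}_{\overline{j},\overline{a}-\overline{1}}\neq 0$ in characteristic different from $2$. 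Exactness forces one of the four components to carry a sign $-1$, which both the printed statement and your argument omit. To close the gap you should actually compute the scalars rather than appeal to one-dimensionality --- most directly by realizing $C(\alpha)$ as the cokernel of $M^{\overline{a}}_{\overline{b}}\to M^{\overline{i}}_{\overline{j}}(\ast)\oplus I$, with $I$ an injective hull of $M^{\overline{a}}_{\overline{b}}$, and evaluating the resulting maps on generators; this also produces the missing sign.
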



\subsection{Structure of $A\dgstab$}
Since $\Omega \cong (-1)$ in $A\dgstab$, and $\Omega$ is periodic in $A\grstab$, it follows that $(1)$ is periodic in $A\dgstab$.  The period depends both on $n$ and $d$.  This period is the same for all indecomposable modules except when $n$ is odd, in which case the indecomposable modules of length $\frac{n+1}{2}$ have their period halved.

\begin{proposition}
\label{dgstable periodicity}
In $A\dgstab$, $M^{\overline{i}}_{\overline{j}} \cong M^{\overline{i}}_{\overline{j}}((n+1)d + 2n)$ for all $\overline{i}, \overline{j} \in \mathbb{Z}/n\mathbb{Z}$.  If $n$ is odd, then we also have $M^{\overline{i}}_{\overline{i} + \overline{\frac{n-1}{2}}} \cong M^{\overline{i}}_{\overline{i} + \overline{\frac{n-1}{2}}}(\frac{(n+1)d}{2} + n)$.
\end{proposition}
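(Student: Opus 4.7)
My approach hinges on the isomorphism $(-1) \cong \Omega$ in $A\dgstab$, which converts any desired periodicity $M \cong M(\ell)$ into a computation of $\Omega$-powers in $A\grstab$: if $\Omega^{m}(M) \cong M(\ell - m)$ in $A\grstab$, then applying the functor $A\grstab \to A\dgstab$ and using $\Omega^{m} \cong (-m)$ gives $M \cong M(\ell)$. Both halves of the proposition will therefore be read off Proposition \ref{grstable Omega formulas}.

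For the first statement I apply formula (\ref{gOf3}) with $k = n$. The shifted head and socle indices satisfy $\overline{i} + \overline{n} = \overline{i}$ and $\overline{j} + \overline{n} = \overline{j}$, and the condition $\overline{n} + \overline{1} - \overline{n} \le \overline{i}$ reduces to $\overline{1} \le \overline{i}$, which always holds because $\overline{1}$ is the minimum of the chosen total order on $\mathbb{Z}/n\mathbb{Z}$. The formula therefore collapses to $\Omega^{2n}(M^{\overline{i}}_{\overline{j}}) \cong M^{\overline{i}}_{\overline{j}}(d(n+1))$ in $A\grstab$, which via $\Omega^{2n} \cong (-2n)$ in $A\dgstab$ yields $M^{\overline{i}}_{\overline{j}} \cong M^{\overline{i}}_{\overline{j}}((n+1)d + 2n)$.

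For the second statement I first observe, from the length formula $l(M^{\overline{i}}_{\overline{j}}) = \delta_{i>j}n + 1 + j - i$, that the indecomposables of length $\frac{n+1}{2}$ are precisely the modules $M^{\overline{i}}_{\overline{i}+\overline{\frac{n-1}{2}}}$. Since $n$ is odd, I apply formula (\ref{gOf4}) with $k = \frac{n+1}{2}$, so $2k-1 = n$; because $\frac{n-1}{2} + \frac{n+1}{2} \equiv 0 \pmod n$, the transformed head and socle indices return to $\overline{i}$ and $\overline{j}$, so $\Omega^{n}$ preserves the isomorphism class up to a grading shift. The main obstacle, and the only real content, is showing that this shift is exactly $\frac{(n+1)d}{2}$ --- equivalently, that the two Iverson brackets $\delta_{\overline{\frac{n+1}{2}} \le \overline{i}}$ and $\delta_{\overline{i} + \overline{\frac{n+1}{2}} \le \overline{j} + \overline{\frac{n+1}{2}}}$ appearing in the formula cancel. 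They cannot be equated symbolically, because the total order on $\mathbb{Z}/n\mathbb{Z}$ is not compatible with addition; I will split into the three cases $i < \frac{n+1}{2}$, $i = \frac{n+1}{2}$, and $i > \frac{n+1}{2}$, compute the integer representatives in $\{1,\dots,n\}$ of both sums (carefully tracking when they wrap past $n$), and check directly that the two brackets take the same value in each case. The resulting identity $\Omega^{n}(M) \cong M(\tfrac{(n+1)d}{2})$ in $A\grstab$, combined with $\Omega^{n} \cong (-n)$ in $A\dgstab$, delivers $M \cong M(\tfrac{(n+1)d}{2} + n)$.
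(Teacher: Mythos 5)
Your proposal is correct and follows exactly the paper's argument: apply formula (\ref{gOf3}) with $k=n$ (respectively (\ref{gOf4}) with $k=\tfrac{n+1}{2}$) and transport the resulting $A\grstab$-isomorphism to $A\dgstab$ via $\Omega \cong (-1)$. Your case check that the two Iverson brackets in (\ref{gOf4}) cancel is a correct (and slightly more explicit) verification of a step the paper leaves implicit.
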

\begin{proof}
By Proposition \ref{grstable Omega formulas} we have that $M^{\overline{i}}_{\overline{j}}(-2n) \cong \Omega^{2n}M^{\overline{i}}_{\overline{j}} = M^{\overline{i}}_{\overline{j}}(d(n + 1))$, from which the first formula follows.  Similarly, if $n$ is odd, then $M^{\overline{i}}_{\overline{i} + \overline{\frac{n-1}{2}}}(-n) \cong \Omega^nM^{\overline{i}}_{\overline{i} + \overline{\frac{n-1}{2}}} = M^{\overline{i}}_{\overline{i} + \overline{\frac{n-1}{2}}}(\frac{(n+1)d}{2})$, from which the second formula follows.
\end{proof}

\begin{definition}
\label{dgstable period and shift iso}
Define the \textbf{period} of $r \in \{1, \cdots, n\}$ to be\\
\begin{equation*}
P(r) =
\begin{cases}
(n+1)d + 2n & \text{ if } r \neq \frac{n+1}{2}\\
\frac{(n+1)d}{2} + n & \text{ if }  r = \frac{n+1}{2}
\end{cases}
\end{equation*}

We also define the \textbf{period} of $M^{\overline{i}}_{\overline{j}}$ to be $P(l(M^{\overline{i}}_{\overline{j}}))$.  We define the period $P(X)$ of an arbitrary object $X$ to be the maximum period of its indecomposable components.  When we do not wish to emphasize the dependence on the length of the module, we will simply write $P = (n+1)d + 2n$.

For any $X\in A\dgstab$, let $\psi: X \rightarrow X((n+1)d + 2n)$ denote the map induced by the natural isomorphism $id \rightarrow ((n+1)d +2n)$, whose unique nonzero component is the identity map in degree $-2n$.  For any $X\in A\dgstab$ that can be expressed as a direct sum of modules of length $\frac{n+1}{2}$, let $\psi^{1/2}: X \rightarrow X(\frac{(n+1)d}{2} + n)$ denote the isomorphism whose unique nonzero component is the identity map in degree $-n$.
\end{definition}

Thus Proposition \ref{dgstable periodicity} states that for any $1 \le i, j \le n$, $M^{\overline{i}}_{\overline{j}} \cong M^{\overline{i}}_{\overline{j}}(P(r))$ in $A\dgstab$, where $r = j+1-i$ is the length of $M^{\overline{i}}_{\overline{j}}$.

One consequence of periodicity is that we can express any $M^{\overline{i}}_{\overline{j}}$ as a suitable shift of some $M^{\overline{1}}_{\overline{l}}$.  Furthermore, $l$ can always be chosen to lie in the range $1 \le l \le \frac{n+1}{2}$, since $l(\Omega M^{\overline{i}}_{\overline{j}}) = n+1-l(M^{\overline{i}}_{\overline{j}})$.

\begin{proposition}
\label{dgstable Omega formulas}
Let $1\le i, r \le n$ and $1 \le l \le \frac{n+1}{2}$.  The following identities hold in $A\dgstab$:
\begin{align}
\label{dOf1}& M^{\overline{i}}_{\overline{i}+\overline{r}-\overline{1}} \cong M^{\overline{1}}_{\overline{r}}(-(d+2)(i-1))\\
\label{dOf2}& M^{\overline{1}}_{\overline{r}} \cong M^{\overline{1}}_{\overline{n}+\overline{1}-\overline{r}}((d+2)(n+1-r) -1)\\
\label{dOf3}& M^{\overline{i}}_{\overline{i}+\overline{n}-\overline{l}} \cong M^{\overline{1}}_{\overline{l}}(-(d+2)(n+i-l) + 1)
\end{align}
\end{proposition}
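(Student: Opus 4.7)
The plan is to deduce each of \eqref{dOf1}--\eqref{dOf3} from the graded $\Omega$-formulas of Proposition \ref{grstable Omega formulas} via the key dg-stable identification $\Omega \cong (-1)$, which upgrades to $\Omega^k \cong (-k)$ for every $k \in \mathbb{Z}$. The common template in each case is: pick the $\Omega$-formula whose output indices match the target module, apply it to the appropriate $M^{\overline{1}}_{\overline{\ast}}$, verify that the delta-function corrections in the grading shift simplify, and then rewrite $\Omega^k$ as the grading shift $(-k)$ and solve for the object of interest.

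For \eqref{dOf1}, I would apply formula \eqref{gOf3} to $M^{\overline{1}}_{\overline{r}}$ with $k = i-1$; the case $i = 1$ is trivial. For $2 \le i \le n$, the correction $\delta_{\overline{n}+\overline{1}-\overline{k}\le \overline{1}}$ becomes $\delta_{\overline{n+2-i}\le \overline{1}}$, which vanishes because $n+2-i \in \{2,\ldots,n\}$ and the stated total order is $\overline{1} < \overline{2} < \cdots < \overline{n}$. This gives $\Omega^{2(i-1)} M^{\overline{1}}_{\overline{r}} = M^{\overline{i}}_{\overline{i+r-1}}(d(i-1))$, and passing to $A\dgstab$ and solving for $M^{\overline{i}}_{\overline{i+r-1}}$ yields \eqref{dOf1}. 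For \eqref{dOf2}, I would apply formula \eqref{gOf4} to $M^{\overline{1}}_{\overline{r}}$ with $k = n+1-r$. The head and socle indices of the target reduce to $\overline{1}$ and $\overline{n+1-r}$ (using $\overline{n+1}=\overline{1}$), and the two delta terms $\delta_{\overline{r}\le \overline{1}}$ and $\delta_{\overline{n+2-r}\le \overline{1}}$ cancel: both vanish for $2 \le r \le n$ and both equal $1$ when $r=1$. The net shift reduces to $d(n+1-r)$, and converting $\Omega^{2n+1-2r}$ to $(-(2n+1-2r))$ produces the claimed shift $(d+2)(n+1-r) - 1$.

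For \eqref{dOf3}, the plan is to chain the previous two identities. Writing $\overline{i+n-l} = \overline{i+r-1}$ with $r = n+1-l$, identity \eqref{dOf1} gives $M^{\overline{i}}_{\overline{i+n-l}} \cong M^{\overline{1}}_{\overline{n+1-l}}(-(d+2)(i-1))$, and identity \eqref{dOf2} then gives $M^{\overline{1}}_{\overline{n+1-l}} \cong M^{\overline{1}}_{\overline{l}}((d+2)l - 1)$. Composing the shifts yields $M^{\overline{1}}_{\overline{l}}((d+2)(l-i+1) - 1)$, which differs from the claimed shift $-(d+2)(n+i-l)+1$ by exactly $(d+2)(n+1) - 2 = P$, the period of $M^{\overline{1}}_{\overline{l}}$ provided by Proposition \ref{dgstable periodicity}. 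Hence the two expressions describe the same object of $A\dgstab$.

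The main obstacle, and essentially the only one, is careful bookkeeping: one must consistently distinguish the cyclic-group arithmetic on $\mathbb{Z}/n\mathbb{Z}$ from the non-compatible total order used inside the delta functions, and correctly handle the boundary cases ($i=1$ for \eqref{dOf1}, $r=1$ for \eqref{dOf2}). No new conceptual input beyond Proposition \ref{grstable Omega formulas}, the identification $\Omega \cong (-1)$, and the periodicity statement of Proposition \ref{dgstable periodicity} is required.
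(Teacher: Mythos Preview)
Your proposal is correct and follows essentially the same strategy as the paper: derive each identity from the graded $\Omega$-formulas of Proposition~\ref{grstable Omega formulas} via $\Omega \cong (-1)$, then chain \eqref{dOf1} and \eqref{dOf2} to obtain \eqref{dOf3}. The only notable variation is in \eqref{dOf2}: the paper applies the simpler formula \eqref{gOf2} (i.e.\ $\Omega^{-1}$) to $M^{\overline{1}}_{\overline{n+1-r}}$ and then invokes the already-proven \eqref{dOf1}, whereas you apply \eqref{gOf4} directly with $k=n+1-r$; both computations are valid, and your explicit handling of the delta cancellations and the periodicity correction in \eqref{dOf3} is in fact more careful than the paper's ``follows immediately.''
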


\begin{proof}
We first show (\ref{dOf1}).  If $i = 1$, we are done.  Otherwise, Equation (\ref{gOf3}) of Proposition \ref{grstable Omega formulas} yields
\begin{equation*}
\begin{split}
M^{\overline{1}}_{\overline{r}}(-2(i-1)) & \cong \Omega^{2(i-1)}M^{\overline{1}}_{\overline{r}}\\
& = M^{\overline{i}}_{\overline{i}+\overline{r}-\overline{1}}(d(i-1))
\end{split}
\end{equation*}
from which (\ref{dOf1}) follows.

Applying (\ref{dOf1}) and (\ref{gOf2}), we obtain
\begin{equation*}
\begin{split}
M^{\overline{1}}_{\overline{n}+\overline{1}-\overline{r}}(1) & \cong \Omega^{-1}(M^{\overline{1}}_{\overline{n}+\overline{1}-\overline{r}})\\
 & = M^{\overline{n}+\overline{1}-\overline{r}}_{\overline{n}}(-d)\\
 & \cong M^{\overline{1}}_{\overline{r}}(-(d+2)(n-r) - d)\\
 & = M^{\overline{1}}_{\overline{r}}(-(d+2)(n+1-r) +2)
\end{split}
\end{equation*}
from which (\ref{dOf2}) follows.

(\ref{dOf3}) follows immediately from (\ref{dOf1}) and (\ref{dOf2}).
\end{proof}

We immediately obtain the following corollary:

\begin{proposition}
\label{dgstable indecomposables weak}
Every indecomposable object of $A\dgstab$ is isomorphic to one of the following:\\
1)  $M^{\overline{1}}_{\overline{l}}(k)$ for $1 \le l < \frac{n+1}{2}$ and $0 \le k < P$\\
2)  $M^{\overline{1}}_{\overline{\frac{n+1}{2}}}(k)$ for $0 \le k < \frac{P}{2}$ (if $n$ is odd)
\end{proposition}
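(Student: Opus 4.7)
The plan is to reduce an arbitrary indecomposable of $A\dgstab$ to one on the list by combining the equivalence $F_A:\mathcal{C}(A) \to A\dgstab$ (available here by Theorem \ref{Brauer tree surjectivity}), the known indecomposables of $A\grstab$, the grading-shift identifications of Proposition \ref{dgstable Omega formulas}, and the periodicity of Proposition \ref{dgstable periodicity}.

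First I would invoke Krull-Schmidt for $\mathcal{C}(A)$ (Proposition \ref{krull-schmidt}) together with the fact that $F_A$ is the identity on objects: every indecomposable of $A\dgstab$ is isomorphic to an indecomposable summand of some object of $A\grstab$, hence by Proposition \ref{grstable indecomposables} to some $M^{\overline{i}}_{\overline{j}}(k)$ with $\overline{i},\overline{j} \in \mathbb{Z}/n\mathbb{Z}$ and $k \in \mathbb{Z}$. Setting $r = l(M^{\overline{i}}_{\overline{j}})$, formula (\ref{dOf1}) of Proposition \ref{dgstable Omega formulas} gives an isomorphism in $A\dgstab$ between $M^{\overline{i}}_{\overline{j}}$ and a grading shift of $M^{\overline{1}}_{\overline{r}}$. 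If $r \le \frac{n+1}{2}$, I set $l=r$; otherwise, I apply formula (\ref{dOf2}) to replace $M^{\overline{1}}_{\overline{r}}$ by a grading shift of $M^{\overline{1}}_{\overline{n}+\overline{1}-\overline{r}}$ and set $l = n+1-r < \frac{n+1}{2}$. Either way, the original object is isomorphic to $M^{\overline{1}}_{\overline{l}}(k')$ for some $k' \in \mathbb{Z}$ and some $l$ with $1 \le l \le \frac{n+1}{2}$.

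Finally I would apply periodicity: by Proposition \ref{dgstable periodicity} and Definition \ref{dgstable period and shift iso}, the map $\psi$ (or $\psi^{1/2}$ in the case $n$ odd and $l = \frac{n+1}{2}$) yields $M^{\overline{1}}_{\overline{l}} \cong M^{\overline{1}}_{\overline{l}}(P(l))$, where $P(l) = (n+1)d+2n$ unless $n$ is odd and $l = \frac{n+1}{2}$, in which case $P(l) = \frac{(n+1)d}{2}+n$. Iterating this isomorphism suitably many times reduces $k'$ to a representative in $[0,P(l))$, placing the object in family (1) when $l < \frac{n+1}{2}$ and in family (2) when $n$ is odd and $l = \frac{n+1}{2}$.

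The argument is essentially bookkeeping, and there is no genuine obstacle. The only step requiring a moment's care is the length case split and the consistency check at the boundary: when $n$ is odd with $r = \frac{n+1}{2}$, both sub-cases target the same module $M^{\overline{1}}_{\overline{(n+1)/2}}$ and differ only in the grading shift, which becomes immaterial after reduction modulo $\frac{P}{2}$. Note that the proposition only asserts existence of such a representative, not uniqueness, so no further analysis is needed to rule out redundancies in the list.
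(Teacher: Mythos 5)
Your proposal is correct and follows essentially the same route as the paper: reduce an arbitrary indecomposable $M^{\overline{i}}_{\overline{j}}(k)$ to a shift of some $M^{\overline{1}}_{\overline{l}}$ with $1 \le l \le \frac{n+1}{2}$ via Proposition \ref{dgstable Omega formulas}, then reduce the shift modulo $P(l)$ via Proposition \ref{dgstable periodicity}. Your explicit appeal to the Krull--Schmidt property of $\mathcal{C}(A)$ and to Proposition \ref{grstable indecomposables} only spells out a step the paper leaves implicit.
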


\begin{remark}
The above list of objects are in fact pairwise non-isomorphic.  We shall prove this in Theorem \ref{dgstable morphism formula}.
\end{remark}

\begin{proof}
By applying the identities in Proposition \ref{dgstable Omega formulas}, we can express any modules $M^{\overline{i}}_{\overline{j}}$ as $M^{\overline{1}}_{\overline{l}}(k)$ for some $k \in \mathbb{Z}$ and $1 \le l \le \frac{n+1}{2}$.  By Proposition \ref{dgstable periodicity}, we can reduce $k$ mod $P(l)$ until $k$ lies in the desired range.
\end{proof}

Since $A$ is a Nakayama algebra, Lemma \ref{uniserial} guarantees that every morphism between indecomposable objects $X$ and $Y$ can be represented by a map $X \rightarrow \Omega^mY(m)$ in $A\grstab$ for some unique $m$.  To compute $\Hom_{A\dgstab}(M^{\overline{1}}_{\overline{l}}, M^{\overline{1}}_{\overline{r}}(k))$, we must determine which $M^{\overline{i}}_{\overline{j}}(m)$ admit maps from $M^{\overline{1}}_{\overline{l}}$ in the graded stable category, then express such $M^{\overline{i}}_{\overline{j}}(m)$ as $M^{\overline{1}}_{\overline{r}}(k)$ using the formulas in Proposition \ref{dgstable Omega formulas}.

\begin{proposition}
\label{tier inequality}
Let $1\le l, r, j \le n$.  Then\\
$dim\Hom_{A\grstab}(M^{\overline{1}}_{\overline{l}}, M^{{\overline{j}}+\overline{1}-\overline{r}}_{\overline{j}}(k)) =
 \begin{cases}
 1 & \text{ if } max(1, r + l - n) \le j \le min(r, l)\\
 & \phantom{A}\text{ and } k = -d \delta_{\overline{j} \neq \overline{r}}\\
 0 & \text{ otherwise}
 \end{cases}$
\end{proposition}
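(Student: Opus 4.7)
The plan is to reduce the statement to a direct application of Proposition \ref{grstable Hom formulas}, specialized to the parameters $\overline{a} = \overline{1}$, $\overline{b} = \overline{l}$, $\overline{i} = \overline{j}+\overline{1}-\overline{r}$ and target socle index $\overline{j}$. That proposition says the Hom space is at most one-dimensional, and is nonzero precisely when the grading shift $k$ takes a specific value and the arc containment $\langle 1, j \rangle \subset \langle j+1-r, l \rangle$ holds. Thus the proof reduces to two independent checks: matching the grading condition, and translating the arc-containment condition into the stated numerical inequality.

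For the grading shift, the formula demands $k = -d\,\delta_{\overline{1} < \overline{j}+\overline{1}-\overline{r}}$. The representative of $\overline{j}+\overline{1}-\overline{r}$ in $\{1, \ldots, n\}$ equals $1$ iff $j \equiv r \pmod n$, i.e., iff $j = r$ (since $j, r \in \{1, \ldots, n\}$). Otherwise the representative lies in $\{2, \ldots, n\}$, so $\overline{1} < \overline{j}+\overline{1}-\overline{r}$ holds in the total order. This immediately yields $k = -d\,\delta_{\overline{j} \neq \overline{r}}$, matching the statement.

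The combinatorial heart is showing $\langle 1, j \rangle \subset \langle j+1-r, l \rangle$ is equivalent to $\max(1, r+l-n) \leq j \leq \min(r, l)$. I would case-split on whether $j \geq r$ or $j < r$, since this controls the representative $i_0$ of $\overline{j}+\overline{1}-\overline{r}$ in $\{1, \ldots, n\}$ (equal to $j+1-r$ or $n+j+1-r$ respectively), and then on whether this representative is $\leq l$ or $> l$ (which determines whether the outer arc wraps past $n$ back to $1$). A short check in each of the resulting subcases shows: when $j \geq r$, containment forces $j = r$ and $r \leq l$; when $j < r$ and $i_0 > l$ (equivalently $j \geq r+l-n$), containment reduces to $j \leq l$, which is automatic from $j < r \leq$ the constraint; and when $j < r$ with $i_0 \leq l$, the point $1$ fails to lie in the outer arc so the Hom is zero. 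Combining these subcases yields exactly $\max(1, r+l-n) \leq j \leq \min(r, l)$.

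No single step is genuinely hard; the main risk is indexing error in the case analysis, particularly in correctly identifying the representative of $\overline{j}+\overline{1}-\overline{r}$ modulo $n$ and tracking when the outer arc wraps around. Once the subcases are laid out carefully and the two boundary inequalities $j \leq r$ and $r+l-n \leq j$ are seen to arise from the two possible "obstructions" (overshooting $l$ counterclockwise, or the gap $\{l+1,\ldots,i_0-1\}$ separating $1$ from the arc), the calculation is routine bookkeeping.
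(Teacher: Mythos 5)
Your proposal is correct and follows essentially the same route as the paper: both reduce immediately to Proposition \ref{grstable Hom formulas} with $\overline{a}=\overline{1}$, $\overline{b}=\overline{l}$, $\overline{i}=\overline{j}+\overline{1}-\overline{r}$, identify $\delta_{\overline{1}<\overline{j}+\overline{1}-\overline{r}}=\delta_{\overline{j}\neq\overline{r}}$ in the same way, and then translate the arc containment $\langle 1,j\rangle\subset\langle j+1-r,l\rangle$ into the stated inequalities. The only difference is cosmetic: where you case-split on the integer representative of $\overline{j}+\overline{1}-\overline{r}$ and on whether the outer arc wraps, the paper packages the same check as the single chain $l-n+1\le j+1-r\le 1\le j\le l$.
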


\begin{proof}
By Proposition \ref{grstable Hom formulas}, the dimension of the Hom space is nonzero if and only if $\langle 1, j \rangle \subset \langle j+1-r, l \rangle$ and $k = -d\delta_{\overline{1} < \overline{j}+\overline{1}-\overline{r}} = -d\delta_{\overline{j} \neq \overline{r}}$.  Thus it is enough to show that the inequalities in the statement are equivalent to the arc containment condition.  The arc containment condition holds if and only if $e^{\frac{2\pi \sqrt{-1}}{n}j} \in \langle 1, l\rangle$ and $e^{\frac{2\pi \sqrt{-1}}{n}(j+1-r)} \in \langle l+1, 1\rangle$.  This is equivalent to the chain of inequalities
\begin{equation*}
l - n + 1 \le j + 1 - r \le 1 \le j \le l
\end{equation*}
The first two inequalities are equivalent to $l + r - n \le j \le r$.  Combining these with the last two inequalities, we see the system is equivalent to $max(1, l + r - n) \le j \le min(r, l)$.
\end{proof}

We are now ready to give a complete description of the morphisms $M^{\overline{1}}_{\overline{l}} \rightarrow M^{\overline{1}}_{\overline{r}}(k)$ in $A\dgstab$, for $1 \le l, r \le \frac{n+1}{2}$.  It is helpful to organize the morphisms into two families.  The "short" morphisms are those of the form $f_{l, r, j}: M^{\overline{1}}_{\overline{l}} \rightarrow M^{\overline{j}+\overline{1}-\overline{r}}_{\overline{j}}(m)$; note that the codomain is represented by a module of length $r \le \frac{n+1}{2}$.  The "long" morphisms are of the form $g_{l, r, j}: M^{\overline{1}}_{\overline{l}} \rightarrow M^{\overline{j}+\overline{r}}_{\overline{j}}(m)$; here the codomain has length $n+1-r \ge \frac{n+1}{2}$.  

\begin{theorem}[Structure of $A\dgstab$]
\label{dgstable morphism formula}
Let $1 \le l, r \le \lfloor \frac{n+1}{2} \rfloor$ and $0 \le k < P(r)$.  Then $\Hom_{A\dgstab}(M^{\overline{1}}_{\overline{l}}, M^{\overline{1}}_{\overline{r}}(k))$ has dimension at most $1$ and is spanned by:
\begin{equation*}
f_{l, r, j}: M^{\overline{1}}_{\overline{l}} \xrightarrow{\alpha^{\overline{1}, \overline{j} + \overline{1} - \overline{r}}_{\overline{l}, \overline{j}}} M^{\overline{j}+\overline{1}-\overline{r}}_{\overline{j}}(-d \delta_{\overline{j}\neq \overline{r}}) \cong M^{\overline{1}}_{\overline{r}}(k)\\
\end{equation*}
\begin{center}
if $k \equiv (d+2)(r-j) \mod P(r)$ for some $1 \le j \le min(r, l)$
\end{center}

\begin{equation*}
g_{l,r,j}: M^{\overline{1}}_{\overline{l}} \xrightarrow{\alpha^{\overline{1}, \overline{j} + \overline{r}}_{\overline{l}, \overline{j}}} M^{\overline{j}+\overline{r}}_{\overline{j}}(-d\delta_{\overline{j}\neq \overline{n}+\overline{1}-\overline{r}}) \cong M^{\overline{1}}_{\overline{r}}(k)
\end{equation*}
\begin{center}
if $k \equiv (d+2)(n+1-j) -1 \mod P(r)$ for some $max(1, 1 + l - r) \le j \le l$
\end{center}

\begin{equation*}
0 \text{\qquad otherwise}
\end{equation*}

$f_{l, r, j}$ is an isomorphism if and only if $l = r = j$, in which case it is the identity map.  $g_{l, r, j}$ is an isomorphism if and only if $l = r = j = \frac{n+1}{2}$, in which case $g_{l, l, l} = \psi^{1/2}$.  In particular, the indecomposable modules listed in Proposition \ref{dgstable indecomposables weak} are pairwise non-isomorphic.

For $r = \frac{n+1}{2}$ and any value of $l$, the morphisms $f_{l, r, j}$ and $g_{l, r, j}$ are defined for the same values of $j$ and represented by the same morphism in $A\grstab$.  More specifically, for each such $j$,
\begin{equation}
\label{dmf5}
g_{l, r, j} = \psi^{1/2} \circ f_{l, r, j}
\end{equation}
For $l = \frac{n+1}{2}$ and any value of $r$, the morphisms $f_{l,r,j+r-\frac{n+1}{2}}$ and $g_{l,r,j}$ are defined for the same values of $j$, and their unique nonzero components differ only by an application of $\Omega^n$ and a grading shift.  More precisely, for each such $j$,
\begin{equation}
\label{dmf6}
\psi \circ f_{l,r,j+r-\frac{n+1}{2}} = g_{l,r,j} (\frac{(n+1)d}{2} + n)\circ \psi^{1/2}
\end{equation}
Apart from the above identities, all the $f_{l,r,j}$ and $g_{l,r,j}$ are distinct, in the sense that their unique nonzero components cannot be transformed into one another by applying powers of $\Omega$ and grading shifts.

The indecomposable morphisms in $A\dgstab$ are, up to shifts, $f_{l, l-1, l-1}$ for $1 < l \le \lfloor \frac{n+1}{2} \rfloor$, $f_{l, l+1, l}$ for $1 \le l < \lfloor \frac{n+1}{2} \rfloor$, $g_{\frac{n}{2},\frac{n}{2}, \frac{n}{2}}$ for $n$ even, and $g_{\frac{n+1}{2}, \frac{n+1}{2}-1, \frac{n+1}{2}}$ for $n$ odd.

Composition of morphisms in $A\dgstab$ is determined by composing the unique nonzero morphisms in $A\grstab$.  In particular, we have the formulas:
\begin{equation}
\label{dmf1}
f_{r,c,q}((d+2)(r-j))\circ f_{l, r, j} =
\begin{cases}
f_{l, c, q+j-r} & \text{ if } 1 \le q+j-r \le\\
& \quad min(c,l)\\
0 & \text{ otherwise}
\end{cases}
\end{equation}

\begin{equation}
\label{dmf2}
g_{r,c,q}((d+2)(r-j))\circ f_{l, r, j} =
\begin{cases}
g_{l, c, q+j-r} & \text{ if } max(1, 1+l-c) \le\\
& \quad q+j-r \le l\\
0 & \text{ otherwise}
\end{cases}
\end{equation}

\begin{equation}
\label{dmf3}
f_{r,c,q}((d+2)(n+1-j)-1) \circ g_{l,r,j} =
\begin{cases}
g_{l, c, q+j-c} & \text{ if } max(1, 1+l-c) \le\\
& \quad q+j-c \le l\\
0 & \text{ otherwise}
\end{cases}
\end{equation}

\begin{equation}
\label{dmf4}
g_{r,c,q}((d+2)(n+1-j)-1) \circ g_{l,r,j} =
\begin{cases}
\psi \circ f_{l, c, q+j+c-(n+1)} & \text{ if } l < q+j  \le\\
& \quad n+1 \le\\
& \quad q+j+c-1\\
0 & \text{ otherwise}
\end{cases}
\end{equation}
\end{theorem}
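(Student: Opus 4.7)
My approach will hinge on the reduction to $\mathcal{C}(A)$ provided by Theorem \ref{Brauer tree surjectivity}, combined with the One Morphism Rule (Lemma \ref{one morphism rule}), which applies since $A$ is a self-injective Nakayama algebra (Lemma \ref{uniserial}). This identifies $\Hom_{A\dgstab}(M^{\overline{1}}_{\overline{l}}, M^{\overline{1}}_{\overline{r}}(k))$ with $\bigoplus_{m\in\mathbb{Z}} \Hom_{A\grstab}(M^{\overline{1}}_{\overline{l}}, \Omega^m M^{\overline{1}}_{\overline{r}}(k+m))$ and guarantees at most one summand is nonzero, of dimension at most one. The problem is then entirely bookkeeping within $A\grstab$.

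For the dimension formula and the explicit spanning morphisms, I would use Proposition \ref{grstable Omega formulas} to observe that $\Omega^m M^{\overline{1}}_{\overline{r}}$ has length $r$ for $m$ even and length $n+1-r$ for $m$ odd. Any nonzero map out of $M^{\overline{1}}_{\overline{l}}$ thus has target of the form $M^{\overline{j}+\overline{1}-\overline{r}}_{\overline{j}}(-d\delta_{\overline{j}\neq \overline{r}})$ or $M^{\overline{j}+\overline{r}}_{\overline{j}}(-d\delta_{\overline{j}\neq \overline{n}+\overline{1}-\overline{r}})$, and Proposition \ref{tier inequality} pins down the admissible ranges of $j$. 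Inverting the identities of Proposition \ref{dgstable Omega formulas} then re-expresses each such target as $M^{\overline{1}}_{\overline{r}}(k)$ for some specific $k$, extracting its value modulo the period $P(r)$ from Definition \ref{dgstable period and shift iso}. This yields precisely the congruences $k \equiv (d+2)(r-j) \pmod{P(r)}$ and $k \equiv (d+2)(n+1-j)-1 \pmod{P(r)}$, with $f_{l,r,j}$ and $g_{l,r,j}$ read off as the canonical maps $\alpha$.

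The isomorphism classification follows by matching lengths and noting via Proposition \ref{grstable composition formulas} that the underlying canonical map is the identity only when $j=l=r$ (for $f$) or $l=r=j=\frac{n+1}{2}$ (for $g$); the pairwise non-isomorphism claim of Proposition \ref{dgstable indecomposables weak} is then immediate. For the relations (\ref{dmf5}) and (\ref{dmf6}) I would compute both sides directly using the definitions of $\psi$ and $\psi^{1/2}$: when $r = \frac{n+1}{2}$, the short and long descriptions have codomains of equal length but shifts differing by exactly $\frac{P}{2}$, which is precisely the effect of $\psi^{1/2}$; when $l = \frac{n+1}{2}$ the analogous half-period shift on the domain reconciles $f$ with $g$ after applying $\psi$ to the codomain.

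The composition formulas (\ref{dmf1})--(\ref{dmf4}) are obtained by invoking Proposition \ref{grstable composition formulas}: any composite of canonical maps is again a canonical map or zero, governed by arc containment. Translating these arc-containment conditions into the linear inequalities on $j, q, l, r, c$ appearing in the theorem is the main obstacle, and is most subtle for (\ref{dmf4}): the composition of two long morphisms traces a path whose target lives in a tier that has wrapped once around the period, so the resulting map's source and target must be realigned by $\psi$ via Proposition \ref{dgstable periodicity}. Careful case analysis, using that $P(r) = (n+1)d + 2n$ away from $r = \frac{n+1}{2}$, shows the composite is precisely $\psi \circ f_{l,c,q+j+c-(n+1)}$ under the stated range conditions, completing the verification.
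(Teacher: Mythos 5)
Your proposal is correct and follows essentially the same route as the paper: reduce to the orbit category $\mathcal{C}(A)$ via Lemma \ref{uniserial}, use Proposition \ref{tier inequality} to pin down the admissible targets of each length ($r$ or $n+1-r$), re-express them as shifts of $M^{\overline{1}}_{\overline{r}}$ via Proposition \ref{dgstable Omega formulas}, and verify the composition formulas by translating arc containment through Proposition \ref{grstable composition formulas}, with the extra $\psi$ in (\ref{dmf4}) accounted for by a full-period wrap in the grading shift. The only part you gloss over is the explicit case analysis (even versus odd powers of $\Omega$) needed for the distinctness claims, but the strategy you describe is the one the paper carries out.
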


\begin{proof}
By Lemma \ref{uniserial}, $\Hom_{A\grstab}(M^{\overline{1}}_{\overline{l}}, \Omega^mM^{\overline{1}}_{\overline{r}}(k+m))$ is nonzero for at most one $m$.  Thus by Proposition \ref{tier inequality}, $\Hom_{A\dgstab}(M^{\overline{1}}_{\overline{l}}, M^{\overline{1}}_{\overline{r}}(k))$ is nonzero if and only if $\Omega^mM^{\overline{1}}_{\overline{r}}(k+m)) \cong M^{\overline{j}+\overline{1}-\overline{x}}_{\overline{j}}(-d \delta_{{\overline{j}}\neq \overline{x}})$ in $A\grstab$, for some $1 \le x \le n$, $max(1, x + l - n) \le j \le min(x, l)$, and $m\in \mathbb{Z}$.  The only possibles values for the length of $\Omega^mM^{\overline{1}}_{\overline{r}}(k+m))$ are $r$ and $n+1-r$, so we need only consider the cases $x = r$ and $x = n+1-r$, and verify that $M^{\overline{j}+\overline{1}-\overline{x}}_{\overline{j}}(-d \delta_{\overline{j}\neq \overline{x}}) \cong M^{\overline{1}}_{\overline{r}}(k)$ for the desired value of $k$.

If $x = r$, then the condition on $j$ simplifies to $1 \le j \le min(r, l)$.  For each such $j$ we obtain the morphism $f_{l, r, j}$ whose nonzero component is $\alpha^{\overline{1},\overline{j}+\overline{1}-\overline{r}}_{\overline{l}, \overline{j}}: M^{\overline{1}}_{\overline{l}} \rightarrow M^{\overline{j}+\overline{1}-\overline{r}}_{\overline{j}}(-d \delta_{\overline{j}\neq \overline{r}})$.  If $j \neq r$, by substituting $i = n+1 - (r-j)$ into Equation (\ref{dOf1}) of Proposition \ref{dgstable Omega formulas} we obtain
\begin{equation*}
\begin{split}
M^{\overline{j}+\overline{1}-\overline{r}}_{\overline{j}}(-d) & \cong M^{\overline{1}}_{\overline{r}}(-(d+2)(n-(r-j))-d)\\
& = M^{\overline{1}}_{\overline{r}}((d+2)(r-j) -(n+1)d -2n)\\
& \cong M^{\overline{1}}_{\overline{r}}((d+2)(r-j))
\end{split}
\end{equation*}
If $j = r$, then the desired identity is immediate.

If $x = n+1-r$, then the condition on $j$ simplifies to $max(1, 1 + l - r) \le j \le l$.  For each such $j$ we obtain the morphism $g_{l, r, j}$ whose nonzero component is $\alpha^{\overline{1}, \overline{j}+\overline{r}}_{\overline{l}, \overline{j}}: M^{\overline{1}}_{\overline{l}} \rightarrow M^{\overline{j}+\overline{r}}_{\overline{j}}(-d \delta_{\overline{j}\neq \overline{n}+\overline{1}-\overline{r}})$.  If $j\neq n+1-r$, applying Equation (\ref{dOf3}) of Proposition \ref{dgstable Omega formulas} with the substitutions $i \mapsto j+r$ and $l\mapsto r$, we obtain
\begin{equation*}
\begin{split}
M^{\overline{j}+\overline{r}}_{\overline{j}}(-d) & \cong M^{\overline{1}}_{\overline{r}}(-(d+2)(n+j) + 1-d)\\
& \cong M^{\overline{1}}_{\overline{r}}(-(d+2)(n+j) + 1-d + 2(n+1)d + 4n)\\
& = M^{\overline{1}}_{\overline{r}}((d+2)(n+1-j)-1)
\end{split}
\end{equation*}
If $j = n+1-r$, the restrictions on $j$, $r$, and $l$ imply that $j = l = r = \frac{n+1}{2}$; the desired identity then follows from direct substitution and the fact that $P(r) = \frac{(n+1)d}{2} + n$.  This establishes the descriptions of the Hom spaces.

We now determine the isomorphisms of $A\dgstab$.  Note that a morphism is an isomorphism in $\mathcal{C}(A)$ if and only if its unique nonzero component is an isomorphism in $A\grstab$.  If $f_{l,r,j}$ is an isomorphism, $\alpha^{\overline{1},\overline{j}+\overline{1}-\overline{r}}_{\overline{l}, \overline{j}}: M^{\overline{1}}_{\overline{l}} \rightarrow M^{\overline{j}+\overline{1}-\overline{r}}_{\overline{j}}(-d \delta_{\overline{j}\neq \overline{r}})$ must also be an isomorphism.  From Proposition \ref{grstable indecomposables}, we have that $j=r=l$.  Conversely, direct substitution shows that $f_{l,l,l}$ is the identity map.

Similarly, if $g_{l,r,j}$ is an isomorphism, then its nonzero component $\alpha^{\overline{1}, \overline{j} + \overline{r}}_{\overline{l}, \overline{j}}: M^{\overline{1}}_{\overline{l}} \rightarrow M^{\overline{j}+\overline{r}}_{\overline{j}}(-d\delta_{\overline{j}\neq \overline{n}+\overline{1}-\overline{r}})$ is also an isomorphism.  Proposition \ref{grstable indecomposables} then forces $j = r = l = \frac{n+1}{2}$.  Conversely, if $j= r = l = \frac{n+1}{2}$, direct substitution shows that the nonzero component of $g_{l,l,l}: M^{\overline{1}}_{\overline{l}} \rightarrow M^{\overline{1}}_{\overline{l}}(\frac{(n+1)d}{2}+n)$ is the identity map.  Thus $g_{l, l, l} = \psi^{1/2}$.

This completes the description of the isomorphisms of $A\dgstab$.  It follows immediately that $M^{\overline{1}}_{\overline{l}} \cong M^{\overline{1}}_{\overline{r}}(k)$ if and only if $l = r$ and $k \equiv 0 \mod P(l)$.  Thus the indecomposable objects of Proposition \ref{dgstable indecomposables weak} are pairwise non-isomorphic.

If $r = \frac{n+1}{2}$, the identity $g_{l, r, j} = \psi^{1/2} \circ f_{l, r, j}$ follows immediately from direct substitution, as does the fact that $f$ and $g$ are defined for the same values of $j$.  (We note that both morphisms have the same domain, codomain, and nonzero component.)  If $l = \frac{n+1}{2}$, the corresponding statement follows from similar computations.

To show distinctness, suppose that we can transform the nonzero component $f_{l,r,j}$ into that of $g_{l',r',j'}$ by applying grading shifts and $\Omega^m$ for some even integer $m$.  We shall show that $l = l'$, $j=j'$, and $r = r' = \frac{n+1}{2}$.  In order for the domains to be equal, we must have that $l = l'$ and $m$ is a multiple of $2n$.  By using periodicity of $\Omega$ and changing the grading shift, we can also assume without loss of generality that $m=0$.  For the codomains to be equal, we must have in particular that $M^{\overline{j}+\overline{1}-\overline{r}}_{\overline{j}} \cong M^{\overline{j'}+\overline{r}'}_{\overline{j'}}$ in $A\grstab$, hence $j=j'$ and $r = r' = \frac{n+1}{2}$.

Similarly, suppose we can transform the nonzero component of $f_{l,r,j}$ into that of $f_{l',r',j'}$ by applying a grading shift and $\Omega^m$ for some even $m$.  By observing the domain and codomain we once again see that $m$ can be taken to be zero, and we immediately obtain $l = l'$, $r=r'$, and $j=j'$.  The same argument also applies to $g_{l,r,j}$ and $g_{l', r', j'}$.

Next, suppose that we can transform the nonzero component $f_{l,r,j}$ into that of $g_{l',r',j'}$ by applying grading shifts and $\Omega^m$ for some odd integer $m$.  We shall show that $l=l'=\frac{n+1}{2}$, $r=r'$, and $j = j'+r' -\frac{n+1}{2}$.  In order for the domains to be equal, their lengths, $n+1-l$ and $l'$, respectively, must be equal.  This implies that $n$ is odd and $l = l' = \frac{n+1}{2}$.  Furthermore, $m$ must be an odd multiple of $n$; without loss of generality, we may assume that $m=n$ by periodicity.  Observing the codomains, we must have that $M^{\overline{j}+\overline{\frac{n+1}{2}}}_{\overline{j}-\overline{r}+\overline{\frac{n+1}{2}}} \cong M^{\overline{j'}+\overline{r'}}_{\overline{j'}}$ in $A\grstab$.  Comparing the bottom indices, we have that $\overline{j} = \overline{j'}+\overline{r}-\overline{\frac{n+1}{2}}$.  Comparing the top indices and using the previous equation, we have that $\overline{r} = \overline{r'}$, hence $r = r'$.  From the restrictions on the ranges of the indices, we deduce that $j = j'+r'-\frac{n+1}{2}$, as desired.

If we can transform the nonzero component of $f_{l,r,j}$ into that of $f_{l',r',j'}$ by applying a grading shift and $\Omega^m$ for some odd $m$, by considering the lengths of the domain and codomain, we must have that $l = l' = r = r' = \frac{n+1}{2}$.  We can again assume that $m=n$.  Observing the codomains, we must have that $M^{\overline{j}+\overline{\frac{n+1}{2}}}_{\overline{j}-\overline{r}+\overline{\frac{n+1}{2}}} \cong M^{\overline{j'}+\overline{1}-\overline{r'}}_{\overline{j'}}$.  It follows that $j = j'$.  A parallel argument applies to $g_{l,r,j}$ and $g_{l',r',j'}$.  Thus all the morphisms are distinct, except for the listed identities.

A morphism $M^{\overline{1}}_{\overline{l}} \rightarrow M^{\overline{1}}_{\overline{r}}(k)$ is indecomposable if and only if its nonzero component is indecomposable in $A\grstab$.  Thus, up to a degree shift, the indecomposable morphisms of $A\dgstab$ are those $f_{l,r,j}$ and $g_{l,r,j}$ whose nonzero component is a canonical injection or surjection.  Since the domain must be $M^{\overline{1}}_{\overline{l}}$, the only possible values for these nonzero components are $p^{\overline{1}}_{\overline{l}}$ for $1 < l \le \frac{n+1}{2}$ and $\iota^{\overline{1}}_{\overline{l}}$ for $1 \le l \le \frac{n+1}{2}$.  From the definitions, $p^{\overline{1}}_{\overline{l}}$ is the nonzero component of $f_{l, l-1, l-1}$ for all $1 < l \le \frac{n+1}{2}$ and $\iota^{\overline{1}}_{\overline{l}}$ is the nonzero component of $f_{l, l+1, l}$ for all $1 \le l < \lfloor \frac{n+1}{2} \rfloor$.  When $n$ is even, $\lfloor \frac{n+1}{2} \rfloor = \frac{n}{2}$, and $\iota^{\overline{1}}_{\overline{\frac{n}{2}}}$ is the nonzero component of $g_{\frac{n}{2}, \frac{n}{2}, \frac{n}{2}}$.  When $n$ is odd, $\lfloor \frac{n+1}{2} \rfloor = \frac{n+1}{2}$, and $\iota^{\overline{1}}_{\overline{\frac{n+1}{2}}}$ is the nonzero component of $g_{\frac{n+1}{2}, \frac{n+1}{2}-1, \frac{n+1}{2}}$.  These are precisely the indecomposable morphisms listed in the statement.

To verify the composition formulas, we translate them into statements about $A\grstab$ and use Proposition \ref{grstable composition formulas}.

We start with Equation (\ref{dmf1}).  A tedious but straightforward computation using Proposition \ref{grstable Omega formulas} shows that the only possible nonzero component of
\begin{equation*}
f_{r,c,q}((d+2)(r-j))\circ f_{l, r, j}: M^{\overline{1}}_{\overline{l}} \rightarrow M^{\overline{1}}_{\overline{c}}((d+2)(c-(q+j-r)))
\end{equation*}
 is $\alpha^{\overline{j} + \overline{1} - \overline{r}, (\overline{q} + \overline{j} - \overline{r}) + \overline{1} - \overline{c}}_{\overline{j}, \overline{q} + \overline{j} - \overline{r}}(-d\delta_{j\neq r})\circ \alpha^{\overline{1},\overline{j} + \overline{1} - \overline{r}}_{\overline{l}, \overline{j}}$.  Then by Proposition \ref{grstable composition formulas}, this composition is nonzero if and only if $\langle 1, q+j-r\rangle \subset \langle (q+j-r) + 1-c , l \rangle$, in which case it is equal to $\alpha^{\overline{1}, (\overline{q} + \overline{j} - \overline{r}) + \overline{1} - \overline{c}}_{\overline{l}, \overline{q} + \overline{j} - \overline{r}}$.  Since the codomain of this morphism has length $c$, it follows that the resulting morphism, if nonzero, is equal to $f_{l, c, q+j-r}$.  It remains to verify that the arc containment condition is equivalent to the desired inequality.  If $q+j-r < 1$, then the restrictions on $q, j, r$, and $l$ imply that $l-n < q+j-r < 1$, hence both the desired inequality and the arc containment condition are false.  If $q+j-r \ge 1$, the restrictions on $q, j, l, r$, and $r$ imply that $1 \le q+j-r \le n$ and $c+l-n \le 1$.  We can then apply Proposition \ref{tier inequality} and conclude the arc containment condition is equivalent to the inequality $1 \le q+j-r \le min(c,l)$.  Thus Equation (\ref{dmf1}) holds.

Proceeding to Equation (\ref{dmf2}), the only possible nonzero component of
\begin{equation*}
g_{r,c,q}((d+2)(r-j))\circ f_{l, r, j}: M^{\overline{1}}_{\overline{l}} \rightarrow M^{\overline{1}}_{\overline{c}}((d+2)(n+1-(q+j-r))-1)
\end{equation*}
 is $\alpha^{\overline{j}+\overline{1}-\overline{r}, (\overline{q}+\overline{j}-\overline{r})+\overline{c}}_{\overline{j}, \overline{q}+\overline{j}-\overline{r}}(-d\delta_{j\neq r}) \circ \alpha^{\overline{1}, \overline{j} + \overline{1} -\overline{r}}_{\overline{l}, \overline{j}}$.  This composition is nonzero if and only if $\langle 1, q+j-r \rangle \subset \langle (q+j-r)+c, l \rangle$, in which case it is equal to $\alpha^{\overline{1}, (\overline{q}+\overline{j}-\overline{r})+\overline{c}}_{\overline{l}, \overline{q}+\overline{j}-\overline{r}}$.  The codomain of this component has length $n+1-c$, hence the composition, if nonzero, is equal to $g_{l, c, q+j-r}$.  The same argument as above shows that the arc containment condition is equivalent to the inequality $max(1, 1+l-c)\le q+j-r \le l$.  Thus Equation (\ref{dmf2}) holds.

For Equation (\ref{dmf3}), the only possible nonzero component of
\begin{equation*}
f_{r,c,q}((d+2)(n+1-j)-1) \circ g_{l,r,j}: M^{\overline{1}}_{\overline{l}} \rightarrow M^{\overline{1}}_{\overline{c}}((d+2)(n+1-(q+j-c))-1)
\end{equation*}
 is $\alpha^{\overline{j}+\overline{r}, \overline{q}+\overline{j}}_{\overline{j}, \overline{q}+\overline{j}-\overline{c}}(-d \delta_{\overline{j}\neq \overline{n}+\overline{1}-\overline{r}}) \circ \alpha^{\overline{1}, \overline{j}+\overline{r}}_{\overline{l}, \overline{j}}$. This composition is nonzero if and only if $\langle 1, q+j-c \rangle \subset \langle q+j, l \rangle$, in which case it is equal to $\alpha^{\overline{1}, \overline{q} + \overline{j}}_{\overline{l}, \overline{q} + \overline{j} - \overline{c}}$.  The codomain of this component has length $n+1-c$, hence the composition, if nonzero, is equal to $g_{l,c,q+j-c}$.   The same argument as above shows that the arc containment condition is equivalent to the inequality $max(1, 1+l-c) \le q+j-c \le l$.  Thus Equation (\ref{dmf3}) holds.

For Equation (\ref{dmf4}), the only possible nonzero component of
\begin{equation*}
g_{r,c,q}((d+2)(n+1-j)-1) \circ g_{l,r,j}: M^{\overline{1}}_{\overline{l}} \rightarrow M^{\overline{1}}_{\overline{c}}((d+2)(2(n+1)-q-j)-2)
\end{equation*}
 is $\alpha^{\overline{j}+\overline{r},\overline{q}+\overline{j}}_{\overline{j}, \overline{q}+\overline{j}+\overline{c}-\overline{1}}(-d\delta_{\overline{j}\neq \overline{n}+\overline{1}-\overline{r}}) \circ \alpha^{\overline{1}, \overline{j}+\overline{r}}_{\overline{l}, \overline{j}}$.  This composition is nonzero if and only if $\langle 1, q+j+c-1 \rangle \subset \langle q+j, l \rangle$, in which case it is equal to $\alpha^{\overline{1}, \overline{q}+\overline{j}}_{\overline{l}, \overline{q}+\overline{j}+\overline{c}-\overline{1}}$.  It is clear that the desired inequality implies the arc containment condition; we now show the converse.  Due to the restrictions on $q, j$, and $c$, we have that $2 \le q+j \le q+j+c-1 \le l+n$ and $q+j \le n+1$.  Thus if $q+j \le l$, we have that $1 < q+j \le l$, and the arc containment condition fails.  Thus we must have that $l < q+j \le n+1$.  If $l < q+j+c-1 \le n+1$, then the arc containment condition fails, hence we must also have $n+1 \le q+j+c-1$.  The desired inequality follows immediately.  Thus the arc containment condition and the desired equality are equivalent.  If both hold, the codomain of the nonzero component has length $c$.  We also have that $1 \le q+j+c-(n+1) \le min(c, l)$.  Thus the composition is equal to $\psi \circ f_{l, c, q+j+c-(n+1)}$.  To explain the presence of $\psi$ in this formula, note that the grading shift of the codomain of the composition is 
\begin{equation*}
(d+2)(2(n+1)-q-j)-2 = [(d+2)(c-(q+j+c-(n+1)))] + [(n+1)d+2n]
\end{equation*}
The factor of $\psi$ accounts for the second bracketed term.
\end{proof}

Computing the cones of the morphisms in Theorem \ref{dgstable morphism formula} is straightforward, since the computations can be done in $A\grstab$.

\begin{proposition}
For all values of $l$, $r$, and $j$ for which it is defined, $f_{l, r, j}$ can be completed to the exact triangle:
\begin{eqnarray}
\label{dcf1}
\begin{tikzcd}
M^{\overline{1}}_{\overline{l}} \arrow[d, "f_{l,r,j}"]\\
M^{\overline{1}}_{\overline{r}}((d+2)(r-j)) \arrow[d, "h_1"]\\
\delta_{\overline{j}\neq \overline{r}}M^{\overline{1}}_{\overline{r}-\overline{j}}((d+2)(r-j)) \oplus \delta_{\overline{j}\neq \overline{l}}M^{\overline{1}}_{\overline{l}-\overline{j}}((d+2)(n+1-j)-1) \arrow[d, "h_2"]\\
M^{\overline{1}}_{\overline{l}}(1)
\end{tikzcd}
\end{eqnarray}
where
$h_1 = \begin{pmatrix} \delta_{\overline{j}\neq \overline{r}}f_{r, r-j, r-j}((d+2)(r-j))\\ \delta_{\overline{j}\neq \overline{l}}g_{r, l-j, r}((d+2)(r-j)) \end{pmatrix}$ and \\
$h_2 = ( \psi^{-1} \delta_{\overline{j}\neq \overline{r}}g_{r-j, l, r-j}((d+2)(r-j)),\\
\psi^{-1} \delta_{\overline{j}\neq \overline{l}}f_{l-j, l, l-j}((d+2)(n+1-j)-1) )$

For all values of $l$, $r$, and $j$ for which it is defined and such that $j+r \ge \frac{n+1}{2}$, $g_{l,r,j}$ can be completed to the exact triangle:

\begin{equation}
\label{dcf2}
\begin{tikzcd}
M^{\overline{1}}_{\overline{l}} \arrow[d, "g_{l,r,j}"]\\
M^{\overline{1}}_{\overline{r}}((d+2)(n+1-j)-1) \arrow[d, "h_3"]\\
\delta_{\overline{j} + \overline{r} \neq \overline{1}}M^{\overline{1}}_{\overline{n} + \overline{1} - (\overline{j}+\overline{r})}((d+2)(2(n+1)-(j+r))-2)
\end{tikzcd}
\end{equation}
\vspace{-20pt}
\begin{center}
$\oplus$
\end{center}
\vspace{-5pt}
\begin{equation*}
\begin{tikzcd}
\delta_{\overline{j}\neq \overline{l}}M^{\overline{1}}_{\overline{l}-\overline{j}}((d+2)(n+1-j)-1) \arrow[d, "h_4"]\\
M^{\overline{1}}_{\overline{l}}(1)
\end{tikzcd}
\end{equation*}
where
$h_3 = \begin{pmatrix} \delta_{\overline{j} + \overline{r} \neq \overline{1}}g_{r, n+1-(j+r), r}((d+2)(n+1-j)-1)\\ \delta_{\overline{j}\neq \overline{l}}f_{r, l-j, l-j}((d+2)(n+1-j)-1) \end{pmatrix}$ and \\
$h_4 = (\psi^{-2} \delta_{\overline{j} + \overline{r} \neq \overline{1}}g_{n+1-(j+r), l, n+1-(j+r)}((d+2)(2(n+1) -(j+r)) -2),\\
\psi^{-1} \delta_{\overline{j}\neq \overline{l}}f_{l-j, l, l-j}((d+2)(n+1-j)-1) )$ 

For all values of $l$, $r$, and $j$ for which it is defined and such that $j+r \le \frac{n+1}{2}$, $g_{l,r,j}$ can be completed to the exact triangle:

\begin{eqnarray}
\label{dcf3}
\begin{tikzcd}
M^{\overline{1}}_{\overline{l}} \arrow[d, "g_{l,r,j}"]\\
M^{\overline{1}}_{\overline{r}}((d+2)(n+1-j)-1) \arrow[d, "h_5"]\\
\delta_{\overline{j} + \overline{r} \neq \overline{1}}M^{\overline{1}}_{\overline{j}+\overline{r}}((d+2)(n+1)-1) \oplus \delta_{\overline{j}\neq \overline{l}}M^{\overline{1}}_{\overline{l}-\overline{j}}((d+2)(n+1-j)-1) \arrow[d, "h_6"]\\
M^{\overline{1}}_{\overline{l}}(1)
\end{tikzcd}
\end{eqnarray}
where
$h_5 = \begin{pmatrix} \delta_{\overline{j} + \overline{r} \neq \overline{1}}f_{r, j+r, r}((d+2)(n+1-j)-1)\\ \delta_{\overline{j}\neq \overline{l}}f_{r, l-j, l-j}((d+2)(n+1-j)-1) \end{pmatrix}$ and \\
$h_6 = ( \psi^{-1} \delta_{\overline{j} + \overline{r} \neq \overline{1}}f_{j+r, l, l}((d+2)(n+1)-1),\\
\psi^{-1} \delta_{\overline{j}\neq \overline{l}}f_{l-j, l, l-j}((d+2)(n+1-j)-1) )$

Triangles (\ref{dcf2}) and (\ref{dcf3}) are equivalent when $j+r = \frac{n+1}{2}$.
\end{proposition}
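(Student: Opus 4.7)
The plan is to reduce each cone computation to a computation in $A\grstab$. Since $A$ is a self-injective Nakayama algebra, Lemma \ref{uniserial} and Lemma \ref{one morphism rule} guarantee that $F_A$ is an equivalence and that every morphism in $A\dgstab$ between indecomposables has a unique nonzero component in $A\grstab$, which is a canonical map $\alpha^{\overline{a},\overline{i}}_{\overline{b},\overline{j}}$. Hence the cone of $f_{l,r,j}$ or $g_{l,r,j}$ may be obtained by first completing its canonical component to an exact triangle in $A\grstab$ via Proposition \ref{grstable triangle formulas}, and then translating the middle and third terms back to the $M^{\overline{1}}_{\overline{\ast}}(k)$ normal form of Proposition \ref{dgstable indecomposables weak} using the identifications of Proposition \ref{dgstable Omega formulas}.

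For Triangle (\ref{dcf1}), I would apply Proposition \ref{grstable triangle formulas} to $\alpha^{\overline{1},\overline{j}+\overline{1}-\overline{r}}_{\overline{l},\overline{j}}$ with parameters $(a,b,i) = (1,l,j+1-r)$. This yields a middle term $\delta_{\overline{j}\neq \overline{r}} M^{\overline{j}+\overline{1}-\overline{r}}_{\overline{n}} \oplus \delta_{\overline{l}\neq \overline{j}} M^{\overline{l}}_{\overline{j}}$, each summand carrying an explicit grading shift, and a third term $M^{\overline{l}}_{\overline{n}}$ with its own shift. I then invoke (\ref{dOf1}) to rewrite the middle summands as $M^{\overline{1}}_{\overline{r}-\overline{j}}$ and $M^{\overline{1}}_{\overline{l}-\overline{j}}$, and use $M^{\overline{l}}_{\overline{n}}(-d\delta_{\overline{1}\leq \overline{l}}) \cong \Omega^{-1} M^{\overline{1}}_{\overline{l}} \cong M^{\overline{1}}_{\overline{l}}(1)$ for the third. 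Matching the canonical maps produced by Proposition \ref{grstable triangle formulas} against the recipes of $f_{r,r-j,r-j}$, $g_{r,l-j,r}$, $g_{r-j,l,r-j}$, and $f_{l-j,l,l-j}$ recovers $h_1$ and $h_2$; the $\psi^{-1}$ factors in $h_2$ arise because the grading-shift bookkeeping produces a codomain that is $P = (n+1)d + 2n$ larger than the one prescribed by Theorem \ref{dgstable morphism formula}, which the factor $\psi^{-1}$ corrects via periodicity.

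The argument for $g_{l,r,j}$ follows the identical recipe, starting from the canonical representative $\alpha^{\overline{1},\overline{j}+\overline{r}}_{\overline{l},\overline{j}}$; the split into Triangles (\ref{dcf2}) and (\ref{dcf3}) is dictated by which of (\ref{dOf1}) or (\ref{dOf2}) is used to normalize the length-$(n+1-(j+r))$ summand $M^{\overline{j}+\overline{r}}_{\overline{n}}$. When $j + r \geq \frac{n+1}{2}$, one applies (\ref{dOf2}) and produces a summand $M^{\overline{1}}_{\overline{n}+\overline{1}-(\overline{j}+\overline{r})}$ together with a $\psi^{-2}$ correction in $h_4$; when $j+r \leq \frac{n+1}{2}$, one applies (\ref{dOf1}) directly and obtains $M^{\overline{1}}_{\overline{j}+\overline{r}}$ with only $\psi^{-1}$ in $h_6$. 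The equivalence of the two presentations at the overlap $j+r = \frac{n+1}{2}$ follows from the identity $g_{l,r,j} = \psi^{1/2} \circ f_{l,r,j}$ of Equation (\ref{dmf5}), whose use reconciles the two shift conventions.

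The main obstacle is the purely bookkeeping challenge of tracking grading shifts through all the $\delta$-dependent cases: each canonical map in $h_1, h_3, h_5$ inherits a shift from both the triangle of Proposition \ref{grstable triangle formulas} and from the translation identities of Proposition \ref{dgstable Omega formulas}, and one must verify that the resulting exponent of $(k)$ matches the one prescribed by the $f_{\ast}, g_{\ast}$ recipes of Theorem \ref{dgstable morphism formula}, including the precise placement of the periodicity corrections $\psi^{-1}$ and $\psi^{-2}$. Although each step is elementary, the case analysis is lengthy, and the consistency check at $j+r = \frac{n+1}{2}$ provides a useful sanity test that the chosen normalizations agree.
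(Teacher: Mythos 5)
Your proposal matches the paper's proof: both pass to the unique nonzero component in $A\grstab$, complete it to the explicit triangle of Proposition \ref{grstable triangle formulas} (with the same middle term $\delta_{\overline{j}\neq\overline{r}}M^{\overline{j}+\overline{1}-\overline{r}}_{\overline{n}}\oplus\delta_{\overline{j}\neq\overline{l}}M^{\overline{l}}_{\overline{j}}$ and third term $M^{\overline{l}}_{\overline{n}}(-d)$), and then renormalize all terms and maps via the shift identities of Propositions \ref{grstable Omega formulas} and \ref{dgstable Omega formulas}, with the $\psi^{\pm1},\psi^{-2}$ factors absorbing the periodicity discrepancies. Your identification of the case split for $g_{l,r,j}$ with the choice of normalization of the length-$(n+1-(j+r))$ summand, and the reconciliation at $j+r=\frac{n+1}{2}$ via the half-period, is exactly the content the paper compresses into "the other two cases are proved analogously."
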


\begin{proof}
Note that triangles in $A\grstab$ induce triangles in $A\dgstab$.  By Proposition \ref{grstable triangle formulas}, the nonzero component of $f_{l,r,j}$ fits into an exact triangle
\begin{eqnarray*}
\begin{tikzcd}
M^{\overline{1}}_{\overline{l}} \arrow[d, "\alpha^{\overline{1}, \overline{j}+\overline{1}-\overline{r}}_{\overline{l}, \overline{j}}"]\\
M^{\overline{j} + \overline{1} - \overline{r}}_{\overline{j}}(-d\delta_{\overline{j} \neq \overline{r}}) \arrow[d, "h'_1"]\\
\delta_{\overline{j}\neq \overline{r}}M^{\overline{j} + \overline{1} -\overline{r}}_{\overline{n}}(-d) \oplus \delta_{\overline{j}\neq \overline{l}}M^{\overline{l}}_{\overline{j}}(-d) \arrow[d, "h'_2"]\\
M^{\overline{l}}_{\overline{n}}(-d)
\end{tikzcd}
\end{eqnarray*}
where $h'_1 = \begin{pmatrix} \delta_{\overline{j} \neq \overline{r}}\alpha^{\overline{j}+\overline{1}-\overline{r}, \overline{j}+\overline{1}-\overline{r}}_{\overline{j}, \overline{n}}(-d\delta_{\overline{j} \neq \overline{r}})\\
\delta_{\overline{j}\neq \overline{l}} \alpha^{\overline{j} + \overline{1}-\overline{r}, \overline{l}}_{\overline{j}, \overline{j}}(-d\delta_{\overline{j} \neq \overline{r}}) \end{pmatrix}$, and\\
$h'_2 = \begin{pmatrix} \delta_{\overline{j} \neq \overline{r}}\alpha^{\overline{j}+\overline{1}-\overline{r}, \overline{l}}_{\overline{n}, \overline{n}}(-d) & \delta_{\overline{j}\neq \overline{l}} \alpha^{\overline{l}, \overline{l}}_{\overline{j}, \overline{n}}(-d) \end{pmatrix}$.

Using the identities in Proposition \ref{grstable Omega formulas}, it follows immediately that this triangle is isomorphic to the triangle (\ref{dcf1}).

The other two cases are proved analogously.
\end{proof}

We have described the indecomposable objects and morphisms of $A\dgstab$ in Theorem \ref{dgstable morphism formula}; by two easy counting arguments ($n$ even and $n$ odd), there are $\frac{nP}{2}$ indecomposable objects in $A\dgstab$.  A description of the Auslander-Reiten quiver $A\dgstab$ follows easily.

\begin{definition}
For positive integers $N, M$, let $Q_{N, M}$ denote the quiver with vertex set $V = \mathbb{Z}/N\mathbb{Z} \times \{1, \cdots, M\}$ and arrows of the form $(x, y) \rightarrow (x+1, y+1)$ for $1 \le y < M$ and $(x, y) \rightarrow (x, y-1)$ for $1 < y \le M$.
\end{definition}

\begin{corollary}
\label{dgstable AR quiver}
If $d$ is even, the Auslander-Reiten quiver of $A\dgstab$ is the cylinder $Q_{\frac{P}{2}, n}$.  If $d$ is odd, the Auslander-Reiten quiver is the M{\"o}bius strip $Q_{P, n}/\tau$, where $\tau$ is the involution sending $(x, y)$ to $(x -y + \frac{P+n+1}{2}, n+1-y)$.  The indecomposable morphisms representing the arrows can be chosen such that all squares commute (up to powers of $\psi$), and a composition of arrows out of vertex $(x, y)$ is zero if and only if the composition contains at least $y$ vertical arrows or at least $n+1-y$ diagonal arrows.
\end{corollary}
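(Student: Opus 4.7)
The strategy is to set up an explicit bijection between the indecomposables of $A\dgstab$ and the vertices of the target quiver, then read off the arrows and relations directly from Theorem~\ref{dgstable morphism formula}.

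First I would parametrize vertices. Using the identifications of Proposition~\ref{dgstable Omega formulas}, every indecomposable of $A\dgstab$ can be written as $M^{\overline{1}}_{\overline{y}}((d+2)x)$ for some $y \in \{1, \ldots, n\}$ and integer $x$, with the two representatives at heights $y$ and $n+1-y$ related by~(\ref{dOf2}). The key arithmetic fact is $P = (n+1)(d+2) - 2$, so $\gcd(d+2, P) = \gcd(d+2, 2)$, which is $2$ when $d$ is even and $1$ when $d$ is odd. When $d$ is even, the shift $(d+2)(n+1-y) - 1$ in~(\ref{dOf2}) is odd, so~(\ref{dOf2}) toggles the parity of the grading shift; restricting to even shifts yields a bijection from $\mathbb{Z}/(P/2) \times \{1, \ldots, n\}$ onto the indecomposables. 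When $d$ is odd, $(d+2)$ is invertible modulo $P$, so $x \in \mathbb{Z}/P$ parametrizes all shifts, and each indecomposable has exactly two representatives identified by $(x, y) \sim (x + (n+1-y) + \alpha, n+1-y)$, where $\alpha$ is the inverse of $-(d+2)$ modulo $P$. A direct computation gives $\alpha = (P-n-1)/2$ and shows this identification coincides with the involution $\tau$ of the statement.

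Next, for the arrows, Theorem~\ref{dgstable morphism formula} identifies the indecomposable morphisms out of $M^{\overline{1}}_{\overline{y}}$ (in the extended parametrization, where the boundary $g$'s coincide with $f$'s via~(\ref{dOf2})) as $f_{y, y-1, y-1}: M^{\overline{1}}_{\overline{y}} \to M^{\overline{1}}_{\overline{y-1}}$ and $f_{y, y+1, y}: M^{\overline{1}}_{\overline{y}} \to M^{\overline{1}}_{\overline{y+1}}(d+2)$. Under our parametrization these become precisely the down arrows $(x, y) \to (x, y-1)$ and the diagonal arrows $(x, y) \to (x+1, y+1)$ of $Q_{P/2, n}$ (respectively $Q_{P, n}/\tau$), with the expected absence of arrows across the $y=1$ and $y=n$ boundaries.

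For commutativity and vanishing, comparing down-then-up with up-then-down from $(x, y)$ to $(x+1, y)$ using the composition formula~(\ref{dmf1}) gives $f_{y, y, y-1}$ in both cases, so interior squares commute on the nose; an extra factor of $\psi$ appears only via~(\ref{dmf4}), when a path wraps past the periodicity boundary. For the vanishing criterion, iterated application of~(\ref{dmf1})--(\ref{dmf4}) shows that the composition of $a$ down and $b$ diagonal arrows out of $(x, y)$ reduces to a canonical $\alpha$-map $M^{\overline{1}}_{\overline{y}} \to M^{\overline{1-b}}_{\overline{y-a}}$, which by the arc containment condition of Proposition~\ref{grstable Hom formulas} is nonzero exactly when $1 \le y - a$ and $y + b \le n$, equivalent to ``$a < y$ and $b < n+1-y$''. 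The main hurdle is the bookkeeping---treating the even and odd $d$ cases in parallel and matching the natural identification to the explicit form of $\tau$---once that is settled, the rest is a direct appeal to Theorem~\ref{dgstable morphism formula}.
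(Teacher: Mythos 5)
Your proposal is correct and follows essentially the same route as the paper: parametrize indecomposables as $M^{\overline{1}}_{\overline{y}}((d+2)x)$, use the arithmetic of $\gcd(d+2,P)$ to split the even/odd $d$ cases, verify that the identification from Equation (\ref{dOf2}) is exactly $\tau$, and then read arrows, commutativity, and the vanishing criterion off Theorem \ref{dgstable morphism formula}. The paper is marginally more explicit about matching the $g$-morphisms at the central line $y=\tfrac{n+1}{2}$ and about checking surjectivity of the vertex map for each height $y$, but these are details your outline already anticipates.
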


\begin{example}
\begin{eqnarray*}
\begin{tikzcd}
3 \arrow[d]  & \bullet \arrow[d] & \bullet \arrow[d] & \bullet \arrow[d] & \bullet \arrow[d]  & \bullet \arrow[d]  & \bullet \arrow[d] & 3 \arrow[d]
\\
2 \arrow[d] \arrow[ur] & \bullet \arrow[d] \arrow[ur] & \bullet \arrow[d] \arrow[ur] & \bullet \arrow[d] \arrow[ur] & \bullet \arrow[d] \arrow[ur] & \bullet \arrow[d] \arrow[ur] & \bullet \arrow[d] \arrow[ur] & 2 \arrow[d]
\\
1 \arrow[ur] & \bullet \arrow[ur] & \bullet \arrow[ur] & \bullet \arrow[ur] & \bullet \arrow[ur] & \bullet \arrow[ur] & \bullet \arrow[ur] & 1
\end{tikzcd}
\end{eqnarray*}
\begin{center}
The Auslander-Reiten quiver of $A\dgstab$ for $n=3, d = 2$.\\
Corresponding numbered vertices are identified.
\end{center}

\begin{eqnarray*}
\begin{tikzcd}
& & 1 \arrow[d] & \bullet \arrow[d] & \bullet \arrow[d] & \bullet \arrow[d] & 3 \arrow[d]
\\
& 2 \arrow[d] \arrow[ur] & \bullet \arrow[d] \arrow[ur] & \bullet \arrow[d] \arrow[ur] & \bullet \arrow[d] \arrow[ur] & \bullet \arrow[d] \arrow[ur] & 2 \arrow[d]
\\
3 \arrow[ur] & \bullet \arrow[ur] & \bullet \arrow[ur] & \bullet \arrow[ur] & \bullet \arrow[ur] & \bullet \arrow[ur] & 1
\end{tikzcd}
\end{eqnarray*}
\begin{center}
The Auslander-Reiten quiver of $A\dgstab$ for $n=3, d = 1$.\\
Corresponding numbered vertices are identified.
\end{center}
\end{example}

\begin{proof}
An easy counting argument shows that there are exactly $\frac{Pn}{2}$ indecomposable objects in $A\dgstab$ (regardless of parity of $n$), and that there are $\frac{Pn}{2}$ vertices in the corresponding candidate quivers.  We first make explicit the bijection between vertices and indecomposable objects.

For $d$ even, $x \in \mathbb{Z}/\frac{P}{2}\mathbb{Z}$, and $1 \le y \le n$, associate the vertex $(x, y)$ with the object $M^{\overline{1}}_{\overline{y}}((d+2)x)$; the isomorphism class of this object is independent of the choice of representative of $x$ since $d+2$ is even.  For $\lfloor \frac{n+1}{2} \rfloor < y \le n$, note that $M^{\overline{1}}_{\overline{y}}((d+2)x) \cong M^{\overline{1}}_{\overline{ n+1 - y }}((d+2)(x+n+1-y) - 1)$, with $1 \le n+1-y < \lfloor \frac{n+1}{2} \rfloor$.  Since $(n+1)(d+2) \equiv 2$ in $\mathbb{Z}/P\mathbb{Z}$ and both $d$ and $P$ are even, we have that $(d+2) = (2)$ is an index $2$ subgroup of $\mathbb{Z}/P\mathbb{Z}$.  Thus, for fixed $1 < y < \frac{n+1}{2}$, the above mapping sends $\{ (x, y) \mid 0 \le x < \frac{P}{2} \}$ onto $\{ M^{\overline{1}}_{\overline{y}}(2i) \mid 0 \le i < \frac{P}{2} \}$.  For fixed $1 < n+1-y < \frac{n+1}{2}$, the mapping sends $\{ (x,y) \mid 0 \le x < \frac{P}{2} \}$ onto $\{ M^{\overline{1}}_{\overline{y}}(2i+1) \mid 0 \le i < \frac{P}{2} \}$.  If $n$ is odd, then $d+2$ generates $\mathbb{Z}/\frac{P}{2} \mathbb{Z}$ and so $\{(x, \frac{n+1}{2}) \mid 0 \le x < \frac{P}{2} \}$ maps onto $\{ M^{\overline{1}}_{\overline{\frac{n+1}{2}}}(i) \mid 0 \le i < \frac{P}{2} \}$.  Thus the above mapping establishes a bijection between the indecomposable objects of $A\dgstab$ and the vertices of $Q_{\frac{P}{2}, n}$.  

When $d$ is odd, associate the vertex $(x,y)$ of $Q_{P, n}$ to the object $M^{\overline{1}}_{\overline{y}}((d+2)x)$.  Note that $d+2$ is odd and $\langle d+2 \rangle \supset \langle 2 \rangle$ in $\mathbb{Z}/P\mathbb{Z}$; therefore $d+2$ generates $\mathbb{Z}/P\mathbb{Z}$.  Thus, for each $1 \le y \le n$, the above mapping establishes a bijection between $\{(x, y) \mid 0 \le x < P\}$ and $\{M^{\overline{1}}_{\overline{y}}(i) \mid 0 \le i < P \}$.  In particular, we have a two-to-one map from the vertices of $Q_{P,n}$ to the indecomposable objects of $A\dgstab$.  Furthermore, a straightforward computation using Equation (\ref{dOf2}) of Lemma \ref{dgstable Omega formulas} shows that $(x,y)$ and $\tau(x,y)$ have the same image.  Thus the map defines a bijection between the vertices of $Q_{P,n}/\tau$ are the indecomposable objects of $A\dgstab$.

It remains to show that the edges of the candidate quivers correspond to the indecomposable morphisms.  For $d$ of arbitrary parity, every length one indecomposable object of $A\dgstab$ has one indecomposable morphism in and out; all other objects have two indecomposable morphisms in and out.  Thus the degrees of the vertices in the Auslander-Reiten quiver agree with the degrees of the vertices of the candidate quivers.  Note that both candidate quivers are symmetric about the central line $y=\frac{n+1}{2}$.  For a vertex $(x,y)$, let $\widetilde{y} = min\{y, n+1-y\}$.  For vertices $(x,y)$ which do not lie on the central line, it is straightforward to verify that shifts of the indecomposable morphism $f_{\widetilde{y}, \widetilde{y}-1, \widetilde{y}-1}$ correspond to the arrows exiting $(x,y)$ and pointing away from the central line.  Similarly, shifts of the morphism $f_{\widetilde{y}, \widetilde{y}+1, \widetilde{y}}$ correspond to the arrows pointing towards the central line, and shifts of $g_{\frac{n}{2}, \frac{n}{2}, \frac{n}{2}}$ correspond to arrows crossing the central line.  For vertices of the form $(x, \frac{n+1}{2})$, when $d$ is even associate $f_{\frac{n+1}{2}, \frac{n+1}{2} -1, \frac{n+1}{2}-1}$ to the vertical arrow and $g_{\frac{n+1}{2}, \frac{n+1}{2} -1, \frac{n+1}{2}}$ to the diagonal arrow.  When $d$ is odd, it is necessary to make this assignment in the quiver $Q_{P, n}$, since $\tau$ swaps vertical and diagonal arrows.  The resulting assignment is compatible with $\tau$ by Equation (\ref{dmf6}).  This establishes the isomorphism of directed graphs between the Auslander-Reiten quiver and the candidate quivers.

With the above assignment of morphisms to the arrows of the Auslander-Reiten quiver, it follows directly from the composition formulas in Theorem \ref{dgstable morphism formula} that all squares commute, up to powers of $\psi$.

To determine when a composition is zero, it suffices to consider compositions starting at the vertex $(0, y)$, $1 \le y \le n$.  Suppose $d$ is even, and choose a composition of morphisms starting at $(0,y)$ and consisting of $s$ vertical arrows and $t$ diagonal arrows.  We may choose to represent the vertex $(0,y)$ by the module $M^{\overline{1}}_{\overline{y}}$; in this case a vertical arrow has as its unique nonzero component the canonical morphism $M^{\overline{1}}_{\overline{y}} \rightarrow M^{\overline{1}}_{\overline{y-1}}$ in $A\grstab$, and a diagonal arrow has unique nonzero component $M^{\overline{1}}_{\overline{y}} \rightarrow M^{\overline{n}}_{\overline{y}}(-d)$.  Thus a morphism consisting of $0 \le s < y$ vertical arrows and $0 \le t < n+1 -y$ diagonal arrows has the canonical morphism $M^{\overline{1}}_{\overline{y}} \rightarrow M^{\overline{1-t}}_{\overline{y-s}}(-d\delta_{t>0})$ as its unique  nonzero component.  When $s = y$, the composition becomes zero in $A\grmod$, and when $t= n+1-y$, it becomes zero in $A\grstab$; in either case, the morphism vanishes in $A\dgstab$.  For $s> y$ or $t > n+1-y$, the morphism factors through a morphism with $s=y$ or $t = n+1-y$, hence is zero.

When $d$ is odd, the automorphism $\tau$ sends vertical arrows to diagonal ones and vice-versa, hence the notion of "vertical" and "diagonal" arrows are only locally defined.  However, once a local choice of definition is made at the vertex $(0,y)$, the argument in the preceding paragraph applies without change.
\end{proof}

\section{Acknowledgement}
I would like to offer my heartfelt thanks to Rapha\"el Rouquier for suggesting the topic of this paper, for providing constant support and encouragement during its writing, and for teaching me homological algebra.

I would also like to thank Torkil Stai for helpful comments regarding the universal property of the triangulated hull, which led to a substantial revision (and hopefully improvement!) of Section \ref{The Dg-Stable Category}.

\bibliography{References}{}
\bibliographystyle{plain}

\end{document}